\numberwithin{table}{section}
\newtheorem{theorem}{Theorem}[section]
\newtheorem{corollary}[theorem]{Corollary}
\newtheorem{lemma}[theorem]{Lemma}
\newtheorem{assumption}[theorem]{Assumption}
\newtheorem{remark}[theorem]{Remark}
\newcommand{\be}{\begin{equation}}
\newcommand{\ee}{\end{equation}}
\newcommand{\bee}{\begin{equation*}}
\newcommand{\eee}{\end{equation*}}
\newcommand{\bea}{\begin{eqnarray}}
\newcommand{\eea}{\end{eqnarray}}
\newcommand{\beaa}{\begin{eqnarray*}}
\newcommand{\eeaa}{\end{eqnarray*}}
\newcommand{\argmin}{\mathop{\mathrm{arg\,min}{}}}
\newcommand{\xstar}{x^{k^*}_{t^*+1}}
\newcommand{\xistar}{\xi^{k^*}_{t^*}}
\newcommand{\ximstar}{\xi^{k^*}_{t^*-1}}
\newcommand{\xk}{x^k}
\newcommand{\xke}{x^{k+1}}
\newcommand{\xki}{x^k_t}
\newcommand{\xkie}{x^{k}_{t+1}}
\newcommand{\xkiestar}{x^{k^*}_{t^*+1}}
\newcommand{\txkm}{\tx^{k-1}} 
\newcommand{\tgk}{\tilde{g}^{k-1}}
\newcommand{\thk}{\tilde{H}^{k-1}}
\newcommand{\cbki}{\cB^k_t}
\newcommand{\cski}{\cS^k_t}
\newcommand{\tgki}{g^k_t}
\newcommand{\thki}{H^k_t}
\newcommand{\xiki}{\xi^{k}_t}
\newcommand{\xikim}{\xi^{k}_{t-1}}
\newcommand{\tx}{\tilde{x}}
\newcommand{\tg}{\tilde{g}}
\newcommand{\thh}{\tilde{H}}
\newcommand{\R}{\mathbb{R}}
\newcommand{\E}{\mathbb{E}} %{\mathbb{E}}
\newcommand{\cB}{\mathcal{B}}
\newcommand{\cS}{{\mathcal{S}}}
\newcommand{\cO}{{\mathcal{O}}}
\newcommand{\ttwo}{^{3/2}}
\newcommand{\tfor}{^{3/4}}
\newcommand{\half}{\frac{1}{2}}
\begin{document}

%\title{Improved Analysis for Adaptive and  Stochastic Variance Reduced Cubic Regularized Newton's Method}
\title{Adaptive Stochastic Variance Reduction for Subsampled Newton Method with Cubic Regularization}

\author{
Junyu Zhang\thanks{Department of Industrial and System Engineering, University of Minnesota (zhan4393@umn.edu).} 
\and
Lin Xiao\thanks{Machine Learning and Optimization Group, Microsoft Research,
Redmond, WA (lin.xiao@microsoft.com).}
\and
Shuzhong Zhang\thanks{Department of Industrial and System Engineering, University of Minnesota (zhangs@umn.edu).}
}

\date{\today} 
\maketitle 

\begin{abstract} 
The cubic regularized Newton method of Nesterov and Polyak has become 
increasingly popular for non-convex optimization because of its capability
of finding an approximate local solution with second-order guarantee.
Several recent works extended this method to the setting of minimizing 
the average of~$N$ smooth functions by replacing the exact gradients 
and Hessians with subsampled approximations.
It has been shown that the total Hessian sample complexity can be reduced
to be sublinear in~$N$ per iteration 
by leveraging stochastic variance reduction techniques.
We present an adaptive variance reduction scheme for subsampled Newton method
with cubic regularization, and show that the expected Hessian sample 
complexity is $\mathcal{O}(N+N^{2/3}\epsilon^{-3/2})$ for finding 
an $(\epsilon,\sqrt{\epsilon})$-approximate local solution 
(in terms of first and second-order guarantees respectively).
Moreover, we show that the same Hessian sample complexity retains
with fixed sample sizes if exact gradients are used. The techniques of our analysis are different from previous works in that we do not
rely on high probability bounds based on matrix concentration inequalities.
Instead, we derive and utilize bounds on the 3rd and 4th order moments of 
the average of random matrices, which are of independent interest on their own.

\paragraph{Keywords:}
cubic-regularized Newton method, subsampling, 
stochastic variance reduction, 
randomized algorithm, 
iteration complexity, sample complexity.
\end{abstract} 
 
\section{Introduction} \label{sec:intro}

We consider the problem of minimizing the average of a large number of loss 
functions:
\be
\label{prob:main-finite}
\min_{x\in\R^d} ~F(x):=\frac{1}{N}\sum_{i=1}^N f_i(x),
\ee
where each $f_i:\R^d\to\R$ is smooth but may be non-convex.
Such problems often arise in machine learning applications where each~$f_i$
is the loss function associated with a training example, and the number of
training examples~$N$ can be very large. 
The necessary conditions for a point $x^*$ to be a local minimum of~$F$ are
\[
\nabla F(x^*)=0, \qquad \nabla^2 F(x^*)\succeq 0.
\]
Our goal is to find an approximate local solution $x$ that satisfies
\begin{equation}
\label{def:2nd-stationary-point}
\|\nabla F(x)\|\leq \epsilon, \qquad
\lambda_\mathrm{min}\left(\nabla^2 F(x) \right) \geq -\sqrt{\epsilon},
\end{equation}
where $\epsilon$ is an desired tolerance and $\lambda_\mathrm{min}(\cdot)$
denotes the smallest eigenvalue of a symmetric matrix.

For minimizing a general smooth and nonconvex function $F$,
Nesterov and Polyak \cite{Cubic:Nesterov} introduced a modified Newton method
with cubic regularization (CR). 
Specifically, each iteration of the CR method consists of the following updates:
\begin{align}
\xi^k &= \displaystyle \arg\min_{\xi} ~\Bigl\{ \xi^T g^k + \half \xi^T H^k \xi + \frac{\sigma}{6}\|\xi\|^3 \Bigr\},
\label{eqn:CR-sub} \\
\xke &= \xk+\xi^k, \nonumber
\end{align} 
where
\begin{equation}\label{eqn:full-g-H}
    g^k = \nabla F(x^k), \qquad H^k = \nabla^2 F(x^k).
\end{equation}
Assuming the Hessian $\nabla^2 F$ to be Lipschitz continuous, it is shown 
in \cite{Cubic:Nesterov} that the CR method finds an approximate solution
satisfying~\eqref{def:2nd-stationary-point} within 
$\mathcal{O}(\epsilon^{-3/2})$ iterations.
This is better than purely gradient-based methods, which need
$\mathcal{O}(\epsilon^{-2})$ iterations to reach a point $x$ satisfying 
$\|\nabla F(x)\|\leq\epsilon$ \cite[Section~1.2.3]{Nesterov04book}.
However, the computational cost per iteration of CR can be much higher
than gradient-based methods.

Much recent efforts have been devoted to improving the efficiency of CR by 
exploiting the finite-sum structure in~\eqref{prob:main-finite}; see, e.g.,
\cite{Cubic:Cartis-1,Cubic:Cartis-2,SCR-1,SCR-2,
AgarwalAllenZhu2017,XuMahoney2017InexactHessian,SVRC-Zhou,SVRC-Lan,
ReddiZaheer2018aistat}.
An natural approach is to replace 
$\nabla F(\xk)$ and $\nabla^2 F(\xk)$ by subsampled approximations:
\begin{align}
g^k = \frac{1}{|\mathcal{S}_k|}\sum_{i\in\mathcal{S}_k}\nabla f_i(\xk),
\label{eqn:grad-sampling}
\end{align}	 

\begin{align} 
H^k = \frac{1}{|\mathcal{B}_k|}\sum_{i\in\mathcal{B}_k}\nabla^2 f_i(\xk),
\label{eqn:Hess-sampling}
\end{align}	
where $\mathcal{S}_k,\mathcal{B}_k\subseteq\{1,\ldots,N\}$ are two sets
(or multisets for sampling with replacement) 
of random indices at the $k$th iteration.
The cost of computing the Hessians $\nabla^2 f_i$ usually dominates
that of the gradients.
Moreover, the cost of solving the CR subproblem~\eqref{eqn:CR-sub} may
grow fast when the batch size $|\mathcal{B}_k|$ increases,
especially when using iterative methods such as gradient descent 
or the Lanczos method 
\cite{Cubic:Cartis-1,Bianconcini2015,Cubic:Carmon,AgarwalAllenZhu2017,chi-jin}.
Therefore, an important measure of efficiency is 
the number of second-order oracle calls for $\nabla^2 f_i$,
i.e., the Hessian sample complexity.

In this paper, we develop an adaptive subsampling CR method that requires
$\mathcal{O}(N+N^{2/3}\epsilon^{-3/2})$ second-order oracle calls 
in expectation.
Assuming that $\epsilon$ is small enough, we often simply refer to it as
$\mathcal{O}(N^{2/3}\epsilon^{-3/2})$.
Notice that using the choices in~\eqref{eqn:full-g-H} would require 
$\mathcal{O}(N\epsilon^{-3/2})$ Hessian samples.
Thus this is a significant improvement especially when $N$ is very large.
In the rest of this section, we discuss several related work, and then
outline our contributions.

\subsection{Related works}

It is shown in \cite{Cubic:Cartis-1} that the order of convergence rate of the CR method
remains the same as long as $g_k$ and $H_k$ in~\eqref{eqn:CR-sub} satisfy 
\begin{align}
\bigl\|g^k-\nabla F(\xk)\bigr\|     &\leq C_1\|\xi_k\|^2, 
\label{eqn:grad-approx-cond}\\
\bigl\|H^k-\nabla^2 F(\xk)\bigr\| &\leq C_2\|\xi_k\|,   
\label{eqn:Hess-approx-cond}
\end{align}
with $\xi_k$ being defined in~\eqref{eqn:CR-sub} and $C_1,C_2$ being some positive constants. 
Here $\|\cdot\|$ for vectors denotes their Euclidean norm,
and for matrices denotes their spectral norm.
In order to exploit the averaging structure in~\eqref{prob:main-finite}, 
a~subsampled cubic regularization (SCR) method was proposed in~\cite{SCR-1}
where $g^k$ and $H^k$ are calculated as in~\eqref{eqn:grad-sampling} and~\eqref{eqn:Hess-sampling}
(here we omit additional accept/reject steps based on trust-region methods).
Matrix concentration inequalities are used in \cite{SCR-1} to derive
appropriate sample sizes $|\mathcal{S}_k|$ and $|\mathcal{B}_k|$
such that the conditions~\eqref{eqn:grad-approx-cond} 
and~\eqref{eqn:Hess-approx-cond} hold with high probability.
In particular, matrix Bernstein inequality (e.g., \cite{Tropp2015,SCR-1})
implies  that with probability at least $1-\delta$, 
\begin{equation}\label{eqn:Hess-concentration}
\bigl\|H^k-\nabla^2 F(x^k)\bigr\|\leq 4 L\sqrt{\frac{\log(2d/\delta)}{|\mathcal{B}_k|}},
\end{equation}
where $L$ is a uniform Lipschitz constant of $\nabla f_i$ for all~$i$.
Therefore, if we upper bound the right-hand side above by $C_2\|\xi_k\|$, 
then~\eqref{eqn:Hess-approx-cond} holds with probability at least $1-\delta$
provided that
\begin{equation}\label{eqn:Hess-samplesize-whp}
|\mathcal{B}_k| \geq \frac{16L^2\log(2d/\delta)}{(C_2\|\xi^k\|)^2}.
\end{equation}
A similar condition on $|\mathcal{S}_k|$ is also derived in~\cite{SCR-1}.
The overall gradient and Hessian sample complexities for SCR, 
with or without replacement, are summarized in
Table~\ref{Table:complexity}
(based on the analysis in \cite{SCR-1,XuMahoney2017InexactHessian,SVRC-Lan}).
When $\epsilon\leq 1/N$, SCR can be much worse than
the deterministic CR method.

\begin{table}[t]
\renewcommand{\arraystretch}{1.5}
	\centering
	\begin{tabular}{|c|c|c|c|c|}  
		\hline
		Algorithm &  Replacement  &Gradient Samples & Hessian  Samples & Convergence Type\\ 
		\hline
        CR \cite{Cubic:Nesterov} & (all samples) & $\mathcal{O}\bigl(\frac{N}{\epsilon^{3/2}}\bigr)$& $\mathcal{O}\bigl(\frac{N}{\epsilon^{3/2}}\bigr)$& Deterministic \\ 
		\hline
        \multirow{2}{*}{SCR \cite{SCR-1}} & with & $\widetilde{\mathcal{O}}\bigl(\frac{1}{\epsilon^{7/2}}\bigr)$& $\widetilde{\mathcal{O}}\bigl(\frac{1}{\epsilon^{5/2}}\bigr)$& w.p. $1-\delta_0$\\
        \cline{2-5}
        & without & $\widetilde{\mathcal{O}}\bigl(\frac{\min\{N,\epsilon^{-2}\}}{\epsilon^{3/2}}\bigr)$ & $\widetilde{\mathcal{O}}\bigl(\frac{\min\{N,\epsilon^{-1}\}}{\epsilon^{3/2}}\bigr)$& w.p. $1-\delta_0$\\
		\hline
        \multirow{2}{*}{SVRC \cite{SVRC-Lan}} & with & Not Provided& $\widetilde{\mathcal{O}}\bigl(N+\frac{N^{3/4}}{\epsilon^{3/2}}\bigr)$ & w.p. $1-\delta_0$\\ 
        \cline{2-5}
& without & Not Provided& $\widetilde{\mathcal{O}}\bigl(N+\frac{N^{8/11}}{\epsilon^{3/2}}\bigr)$ & w.p. $1-\delta_0$\\ 
		\hline
        SVRC \cite{SVRC-Zhou} & with & $\widetilde{\mathcal{O}}\bigl(N+\frac{N^{4/5}}{\epsilon^{3/2}}\bigr)$ & $\widetilde{\mathcal{O}}\bigl(N+\frac{N^{4/5}}{\epsilon^{3/2}}\bigr)$ & In Expectation\\
		\hline 
        \multirow{2}{*}{This Paper}       & without & $\mathcal{O}\bigl(N+\frac{N^{2/3}\min\{N^{1/3},\epsilon^{-1}\}}{\epsilon^{3/2}}\bigr)$ & $\mathcal{O}\bigl(N+\frac{N^{2/3}}{\epsilon^{3/2}}\bigr)$ & In Expectation\\ 
        \cline{2-5}
        & with  & $\cO\bigl(N + N^{2/3}\epsilon^{-5/2}\bigr)$ & $\mathcal{O}\bigl(N+\frac{N^{2/3}}{\epsilon^{3/2}}\bigr)$ & In Expectation\\ 
		\hline
	\end{tabular}
\caption{Comparison of gradient and Hessian sample complexities.
The $\widetilde{\mathcal{O}}(\cdot)$ notation hides poly-logarithmic terms 
such as $\log(d/\epsilon\delta_0)$ for \cite{SCR-1,SVRC-Lan} 
and $\log(d)$ for \cite{SVRC-Zhou}.
%\textcolor{red}{(Please check correctness!)}
}
\label{Table:complexity}
\end{table}

In order to further reduce the Hessian sample complexity,
several recent works \cite{SVRC-Lan,SVRC-Zhou,SCR-2}
combine CR with stochastic variance reduction techniques.
Stochastic variance-reduced gradient (SVRG) method was first proposed to
reduce gradient sample complexity of randomized first-order algorithms 
(see \cite{JohnsonZhang13,ZhangMahdaviJin13,XiaoZhang14ProxSVRG} 
for convex optimization
and \cite{AllenZhuHazan2016ICML,Reddi2016ICML} for nonconvex optimization).
Two different SVRC (stochastic variance-reduced cubic regularization) methods
were proposed in \cite{SVRC-Lan} and \cite{SVRC-Zhou} respectively.
Both of them employed the same variance-reduction technique to reduce the 
Hessian sample complexity. 
In particular, \cite{SVRC-Zhou} incorporated additional 
second-order corrections in gradient variance-reduction, therefore it obtained
better gradient sample complexity, but with slightly worse Hessian sample 
complexity than \cite{SVRC-Lan}.
See Table~\ref{Table:complexity} for a summary of their results.
All these methods require $\mathcal{O}(\epsilon^{-3/2})$ calls to solve the 
cubic regularized sub-problem~\eqref{eqn:CR-sub}. 

Subsampled Newton methods without CR have been studied in, e.g., 
\cite{ErdogduMontanari2015,RoostaMahoney2016,XuYangMahoney2016,YeLuoZhang2017},
but their convergence rates are worse than the ones that are based on CR.

Among the works on CR with stochastic variance reduction, most of them 
(e.g., \cite{SCR-1,SVRC-Lan,SCR-2})
rely on matrix concentration bounds such as~\eqref{eqn:Hess-samplesize-whp}
to set the sample sizes $|\mathcal{B}_k|$ and $|\mathcal{S}_k|$ according
to $\|\xi^k\|$ at each iteration~$k$, 
The problem is that $\xi^k$ is the solution to the CR sub-problem 
in~\eqref{eqn:CR-sub}, where $g^k$ and $H^k$ need to be obtained from
$|\mathcal{S}_k|$ and $|\mathcal{B}_k|$ samples in the first place.
While one can assume bounds like~\eqref{eqn:Hess-samplesize-whp} hold in 
the complexity analysis, 
they do not provide practically implementable variance-reduction schemes.

In contrast, the sample sizes in \cite{SVRC-Zhou} are set as constants 
across all iterations, which only depend on~$N$ and~$\log(d)$.
The disadvantage of this approach is that it can be very conservative.
According to concentration bounds such as~\eqref{eqn:Hess-samplesize-whp},
the sample sizes at the initial stage of the algorithm 
(when $\|\xi^k\|$ is large) can be set much smaller than the ones required
in the later stage (when $\|\xi^k\|$ is very small).
Therefore when using a constant sample size,
much of the samples in the initial stage can be wasteful.

There is another technicality of using matrix concentration bounds:
we need inequalities such as~\eqref{eqn:Hess-concentration} to hold for all 
iterations with high probability, say with probability at least $1-\delta_0$.
Then the probability margin $\delta$ per iteration and $\delta_0$ 
should satisfy $(1-\delta)^T\geq1-\delta_0$, 
where $T = \mathcal{O}(\epsilon^{-3/2})$ is the number of iterations
for CR methods.
Therefore, $\delta = \mathcal{O}(\delta_0\epsilon^{3/2})$
and there is an additional 
$\mathcal{O}\left(\log(d/\epsilon\delta_0)\right)$ factor in the
sampling complexities (see Table~\ref{Table:complexity}).
The results in \cite{SVRC-Zhou} are for convergence in expectation,
nevertheless they still have a $\log(d)$ factor and unusually large constants in their bounds.

\subsection{Contributions and outline}

In this paper, we develop an adaptive sampling scheme for stochastic variance 
reduction in the subsampled Newton method with cubic regularization.
In particular, the gradient sample size $|\mathcal{S}_k|$ and Hessian sample
size $|\mathcal{B}_k|$ are chosen adaptively in each iteration to ensure the
following conditions hold \emph{in expectation} (conditioned on $x^k$):
\begin{align}
\bigl\|g^k-\nabla F(\xk)\bigr\|   &\leq C'_1\|\xi^{k-1}\|^2,
\label{eqn:grad-approx-pre} \\
\bigl\|H^k-\nabla^2 F(\xk)\bigr\| &\leq C'_2\|\xi^{k-1}\|,
\label{eqn:Hess-approx-pre}
\end{align}
where $C'_1$ and $C'_2$ are some positive constants.
The major difference from~\eqref{eqn:grad-approx-cond} 
and~\eqref{eqn:Hess-approx-cond} is that here $\|\xi^{k-1}\|$ is a known 
quantity conditioned on $\xk$, 
before choosing the sample sizes to form the approximations $g^k$ and $H^k$.
Indeed we choose $|\mathcal{S}_k|$ and $|\mathcal{B}_k|$ 
based on $\|\xi^{k-1}\|$.
Such an adaptive scheme is readily implementable in practice\footnote{%
Right before submitting this paper, we discovered a recent note \cite{SVRC-note}
which independently showed that the conditions~\eqref{eqn:grad-approx-cond} 
and~\eqref{eqn:Hess-approx-cond} are sufficient to retain the same convergence 
rate of the exact CR method. However it does not provide improved sample 
complexity over previous work listed in Table~\ref{Table:complexity}.}.
Moreover, it does not waste samples in the early stage of the algorithm
as constant sample sizes do. 

We show that our adaptive subsampled CR method has an expected iteration
complexity $\mathcal{O}(\epsilon^{-3/2})$, which is the number of times
the CR subproblem in~\eqref{eqn:CR-sub} needs to be solved in order to
find a point~$x$ satisfying~\eqref{def:2nd-stationary-point}, which is the same as that of the deterministic CR method.
However, the total Hessian sample complexity of our method is
$\mathcal{O}(N^{2/3}\epsilon^{-3/2})$, which is much better than
$\mathcal{O}(N\epsilon^{-3/2})$ of the full CR method, and is indeed better than
all previous works listed in Table~\ref{Table:complexity}.

In addition to the improved Hessian sample complexity, the techniques in our analysis
are quite different from those adopted in previous works.
In particular, we avoid using any high probability bounds based on
matrix or vector concentration inequalities.
Instead, our analysis is based on novel bounds on the 3rd and 4th order 
moments of the average of independent random matrices, 
which are of independent interest on their own.
The type of convergence studied in \cite{SVRC-Zhou} is also in expectation.
However, their analysis still relies on some matrix concentration inequalities,
thus their results contain the $\log(d)$ factor and excessively large constants.

The rest of this paper is organized as follows.
In Section~\ref{sec:adaptive-SVRC}, we present the adaptive SVRC method
and its convergence analysis.
We show that it retains the $\cO(\epsilon^{-3/2})$ iteration complexity of
the exact cubic regularization method, but with only $\cO(N^{2/3})$ 
Hessian samples per iteration.
In addition, we show that sampling with or without replacement have the 
same order of sample complexity.
In Section~\ref{sec:non-adaptive}, we study a non-adaptive SVRC method
with fixed sample size at each iteration. We show that if exact gradients 
are available, then it attains the same total Hessian sample complexity 
$\cO(N^{2/3}\epsilon^{-3/2})$.
If both gradient and Hessian need to be subsampled, we examine the SVRC
method of~\cite{SVRC-Zhou} using the higher moments bounds developed in 
this paper and obtain refined analysis.

\section{Adaptive variance reduction for cubic regularization}
\label{sec:adaptive-SVRC}

In this section we first present the adaptive SVRC method, then analyze its
convergence rate and Hessian sample complexity.

The adaptive SVRC method is a multi-stage iterative method as described in
Algorithm~\ref{algo:SVRC}. 
At the beginning of each stage~$k\geq 1$, we compute the full gradient
$\tgk$ and the full Hessian $\thk$ at $\txkm$, which is the result of the 
previous stage (for $k=1$, $\tilde{x}^0$ is given as input).
Each stage~$k$ has an inner loop of length~$m$ and the variables used in the 
inner loop are indexed with a subscript~$t=0,\ldots,m$.
At the beginning of each inner loop, we set $x^k_0=\txkm$ and $\xi^k_{-1}=0$.
During each inner iteration, we randomly sample two set of indices 
$\cski$ and $\cbki$ satisfying
\begin{equation}
\label{eqn:sample-sizes}
|\cski|\geq \frac{\|\xki-\txkm\|^2}{\epsilon_g}, \qquad
|\cbki|\geq \frac{\|\xki-\txkm\|^2}{\epsilon_H},
\end{equation}
where $\epsilon_g$ and $\epsilon_H$ are determined by $\xikim$ and the desired
tolerance $\epsilon$:
\[
\epsilon_g = \max\bigl\{\|\xikim\|^4, \epsilon^2\bigr\}, \qquad
\epsilon_H = \max\bigl\{\|\xikim\|^2, \epsilon\bigr\}.
\] 
%\textcolor{red}{($\cski$ and $\cbki$ need to be independent?)}
Then we compute the subsampled gradient and Hessian as
\begin{align}
    \tgki &= \frac{1}{|\cski|}\sum_{i\in\cski}\left(\nabla f_i(\xki)-\nabla f_i(\txkm)\right) + \tgk, \label{eqn:svr-g} \\
    \thki &= \frac{1}{|\cbki|}\sum_{i\in\cbki}\left(\nabla^2 f_i(\xki)-\nabla^2 f_i(\txkm)\right) + \thk.
\label{eqn:svr-H}
\end{align}
This construction follows the stochastic variance reduction scheme proposed
in~\cite{JohnsonZhang13}, which has been adopted 
by recent works on subsampled Newton method with cubic regularization 
\cite{chi-jin,SVRC-Lan,SVRC-Zhou}.

We make several remarks regarding the subsampling scheme in 
Algorithm~\ref{algo:SVRC}.  
First, compared with the algorithms in \cite{SCR-1, SVRC-Lan}, 
which use large enough sample sizes to 
ensure~\eqref{eqn:grad-approx-cond} and~\eqref{eqn:Hess-approx-cond} 
with high probability, 
Algorithm~\ref{algo:SVRC} aims to ensure~\eqref{eqn:grad-approx-pre} 
and~\eqref{eqn:Hess-approx-pre} in expectation. 
Moreover, instead of depending on $\xiki$ which is not available until after
the current iteration, our sample sizes are determined by $\xikim$, 
which is computed in the previous iteration.
Second, compared with the constant sample size used in \cite{SVRC-Zhou}, 
our adaptive sampling scheme may use much less samples in the early stages
when $\|\xikim\|$ is relatively large.
Our overall Hessian sample complexity is better than either of these two
previous approaches.

An alternative approach to avoid the dependence of sample sizes on $\xiki$
is to use the full gradient, i.e., let $g_t^k=\nabla F(x_t^k)$,
and use~\eqref{eqn:svr-H} or \eqref{eqn:Hess-sampling}
to compute the Hessian approximation \cite{SCR-2}.
Then condition~\eqref{eqn:grad-approx-cond} holds automatically, and one can
replace~\eqref{eqn:Hess-approx-cond} with
\begin{equation}\label{eqn:Hess-approx-grad}
\bigl\|\thki - \nabla^2 F(\xki)\bigr\|\leq C_2\bigl\|\nabla F(\xki)\bigr\|,
\end{equation}
because it can be be shown that $\|\xiki\|\leq c\|\nabla F(\xki)\|$ 
for some large enough constant~$c$.
Since the full gradient $\nabla F(\xki)$ can be computed before sampling 
$\cbki$, this condition can be used to determine sample size $|\cbki|$. 
However, 
%in a not rigorous sense one have 
it can be shown that one have roughly 
$\|\nabla F(\xki)\| \leq \cO(\|\xikim\|^2)$ plus some random noise, thus the mini-batch size required for~\eqref{eqn:Hess-approx-grad} to hold
can be much larger than that for \eqref{eqn:Hess-approx-pre}. 
 
\begin{algorithm2e}[t]
\linespread{1.2}\selectfont
	%\caption{Adaptive Stochastic Variance Reduced Cubic Regularization Method.}
	\caption{Adaptive SVRC Method.}
	\label{algo:SVRC}%\LinesNumberedHidden
    {\it Input:} initial point $\tilde{x}^0$, inner loop length $m$, 
                 CR parameter $\sigma>0$, and tolerance $\epsilon>0$. \\
 	\For{k = 1,......,K }{
 		Compute $\tgk = \nabla F (\txkm)$ and $\thk = \nabla^2 F(\txkm)$. \\
 		Assign $x^k_0 = \txkm$ and $\xi^k_{-1}=0$. \\
		\For{$t = 0,...,m-1$}{
			Set $\epsilon_g = \max\bigl\{\|\xikim\|^4, \epsilon^2\bigr\}$, $\epsilon_H = \max\bigl\{\|\xikim\|^2, \epsilon\bigr\}$.\\
            Sample index sets $\cski$ and $\cbki$ 
            %satisfying~\eqref{eqn:sample-sizes}. \\
            with $|\cski|\geq \frac{\|\xki-\txkm\|^2}{\epsilon_g}$ and $|\cbki|\geq \frac{\|\xki-\txkm\|^2}{\epsilon_H}$.\\ 
            Construct $\tgki$ and $\thki$ according to~\eqref{eqn:svr-g} and~\eqref{eqn:svr-H} respectively.\\
            $\xiki = \displaystyle\arg\min_{\xi} \Bigl\{\xi^T\tgki + \frac{1}{2}\xi^T\thki\xi + \frac{\sigma}{6}\|\xi\|^3\Bigr\}. $\\		
		$\xkie = \xki + \xiki.$}
	$\tx^k = x^k_m.$
	}
{\it Output:}
Option 1:  
Find $\displaystyle k^*,t^*=\argmin_{1\leq k\leq K, ~0\leq t\leq m-1}\bigl(\|\xiki\|^3+\|\xikim\|^3\bigr)$ and output $x^{k^*}_{t^*+1}$. \\
$\qquad\quad~\,$ 
Option 2: Randomly choose $k^*$ and $t^*$ and output $x^{k^*}_{t^*+1}$.
\end{algorithm2e}

\subsection{Convergence analysis} \label{subsec:cvg-finite}

We make the following assumption regarding the objective function 
in~\eqref{prob:main-finite}:
\begin{assumption}
	\label{assumption:Lips-finite}
    The gradient and Hessian of each component function~$f_i$ are
	Lipschitz-continuous, i.e., 
    there exist positive constants~$L$ and~$\rho$ such that 
    for $i=1,\ldots, N$ and for all $x, y\in\R^d$,
\begin{align}
    \bigl\|\nabla f_i(x)-\nabla f_i(y) \bigr\|  \leq L \|x-y\|, 
      \label{assumption: gradient-Lip} \\
    \bigl\|\nabla^2f_i(x)-\nabla^2f_i(y)\bigr\|_F  \leq \rho\|x-y\|.
	  \label{assumption: Hessian-Lip}
\end{align}
    Consequently, $\nabla F$ and 
    $\nabla^2 F$ are $L$ and $\rho$-Lipschitz continuous respectively.
\end{assumption}

This assumption is very similar to those adopted in subsampled Newton methods
with cubic regularization \cite{Cubic:Nesterov,SCR-1,SCR-2,SVRC-Lan,SVRC-Zhou}.
The only difference is that here we use the Frobenius norm, instead of the
spectral norm, for the Hessian smoothness assumption.
The advantage of using the Frobenius norm is that it works well with matrix
inner product, which allows us to derive simple bounds on the 3rd and 4th order
moments for the average of random matrices.
On the other hand, the Lipschitz constant $\rho$ is always larger than the
one corresponding to the spectral norm, up to a factor of $\sqrt{d}$ in 
the worst case. However, when the Hessians $\nabla^2 f_i$ are of low-rank
or ill-conditioned, which is often the case in practice, 
the Lipschitz constants for different norms are very close.

Before presenting the main results, we first provide a few supporting lemmas. 
The first one gives a simple bound on the 4th order moment of the average of
i.i.d.\ random matrices with zero mean.
Its proof is given in Appendix~\ref{appendix:iid4thmoment}.
\begin{lemma}
	\label{lemma:var-with-pw4}
    Let $Z_1,...,Z_n$ be i.i.d.\ random matrices in $\R^{d\times d}$ with $\mathbb{E}\left[Z_1\right]=0$ and $\mathbb{E}\left[\|Z_1\|_F^4\right]<\infty.$ Then
	\begin{equation}
	\label{lm:var-with-pw4-1}
	\mathbb{E}\left[\biggl\|\frac{1}{n}\sum\limits_{i=1}^{n}Z_i\biggr\|_F^4\right] \leq \frac{3}{n^2}\mathbb{E}\left[\|Z_1\|_F^4\right].
	\end{equation} 
\end{lemma}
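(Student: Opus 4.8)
The plan is to exploit the fact that the Frobenius norm is induced by the matrix inner product $\iprod{A}{B}=\tr(A^\top B)$, so that the fourth power of the norm of a sum expands into a quadruple sum of pairwise inner products, to which independence and the zero-mean hypothesis can be applied slot by slot. Writing $S=\sum_{i=1}^n Z_i$, I would first expand
\[
\|S\|_F^4 = \iprod{S}{S}^2 = \sum_{i,j,k,l=1}^n \iprod{Z_i}{Z_j}\,\iprod{Z_k}{Z_l},
\]
and then take expectations term by term.

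The key observation is that, by independence together with $\E[Z_1]=0$, any term in which one of the four indices is distinct from the other three contributes zero: if, say, $i\notin\{j,k,l\}$, then $Z_i$ is independent of the remaining factors, so its expectation factors out, and since $\E[Z_i]=0$ (entrywise) the whole term vanishes. Hence only those index patterns in which every index is repeated survive, and with four slots these are exactly: (i) all four indices equal, which contributes $n\,\E[\|Z_1\|_F^4]$ in total; and (ii) the three ways of splitting the four slots into two matched pairs with the two index values distinct, namely $i=j\neq k=l$, $i=k\neq j=l$, and $i=l\neq j=k$. (Any pattern with a single isolated index, such as $i=j=k\neq l$, falls under the vanishing case above.)

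For the pairing $i=j\neq k=l$ the summand equals $\E[\|Z_i\|_F^2\,\|Z_k\|_F^2]=\bigl(\E[\|Z_1\|_F^2]\bigr)^2$ by independence, and there are $n(n-1)$ such ordered pairs. For the remaining two pairings the summand is $\E[\iprod{Z_i}{Z_j}^2]$ with $i\neq j$, which I would control by Cauchy--Schwarz, $\iprod{Z_i}{Z_j}^2\le\|Z_i\|_F^2\,\|Z_j\|_F^2$, followed again by independence, giving at most $\bigl(\E[\|Z_1\|_F^2]\bigr)^2$ per term. Summing the three pairing patterns then yields
\[
\E\bigl[\|S\|_F^4\bigr]\le n\,\E\bigl[\|Z_1\|_F^4\bigr] + 3n(n-1)\bigl(\E[\|Z_1\|_F^2]\bigr)^2.
\]

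Finally I would invoke Jensen's inequality $\bigl(\E[\|Z_1\|_F^2]\bigr)^2\le\E[\|Z_1\|_F^4]$ to replace the squared second moment by the fourth moment, obtaining $\E[\|S\|_F^4]\le n(3n-2)\,\E[\|Z_1\|_F^4]\le 3n^2\,\E[\|Z_1\|_F^4]$; dividing through by $n^4$ gives the stated bound. The only steps requiring genuine care are the combinatorial bookkeeping---correctly enumerating the surviving patterns and checking that every "isolated index" term truly vanishes---and the Cauchy--Schwarz estimate that tames the off-diagonal pairings $\E[\iprod{Z_i}{Z_j}^2]$; the rest is routine counting and a single application of Jensen's inequality.
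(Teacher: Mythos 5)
Your proof is correct and follows essentially the same route as the paper's: expand $\|\sum_i Z_i\|_F^4$ into a quadruple sum of inner products, use independence and $\E[Z_1]=0$ to kill every term with an isolated index, and bound the three surviving pair patterns via Cauchy--Schwarz and Jensen to get $(3n^2-2n)\,\E[\|Z_1\|_F^4]\le 3n^2\,\E[\|Z_1\|_F^4]$. The paper merely organizes the same bookkeeping into seven explicitly enumerated terms $T_1,\dots,T_7$.
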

Based on the above lemma, we can bound the 2nd and 4th order variances of the 
gradient and Hessian approximations.
The following lemma is proved in Appendix~\ref{appendix:Hess-grad-var}.
\begin{lemma}
	\label{lemma:var-with-Hess-g}
    Let the variance reduced gradient $\tgki$ and  Hessian $\thki$ be constructed according to~\eqref{eqn:svr-g} and~\eqref{eqn:svr-H}. Then they satisfy the following equalities and inequalities
	\begin{align*}
        \mathbb{E}\bigl[\thki\,\big|\,\xki\bigr]  &= \nabla^2 F(\xki),\\
        \mathbb{E}\left[\bigl\|\thki-\nabla^2 F(\xki)\bigr\|_F^2\,\big|\,\xki\right] &\leq \frac{\rho^2}{|\cbki|} \bigl\|\xki-\txkm\bigr\|^2,\\
        \mathbb{E}\left[\bigl\|\thki-\nabla^2 F(\xki)\bigr\|_F^4\,\big|\,\xki\right] &\leq \frac{33\rho^4}{|\cbki|^2} \bigl\|\xki-\txkm\bigr\|^4,
	\end{align*}
	and 
	\begin{align*}
        \mathbb{E}\left[\tgki\,\big|\,\xki\right]  &= \nabla F(\xki),\\
        \mathbb{E}\left[\bigl\|\tgki-\nabla F(\xki)\bigr\|_F^2 \,\big|\,\xki\right] &\leq \frac{L^2}{|\cski|} \bigl\|\xki-\txkm\bigr\|^2.
    \end{align*}
\end{lemma}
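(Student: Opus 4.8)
The plan is to reduce every bound to statements about the average of i.i.d.\ zero-mean random matrices and then invoke the moment estimates, in particular Lemma~\ref{lemma:var-with-pw4}. Throughout I would fix the stage index~$k$ and condition on $\xki$, which also fixes $\txkm$ and the sample size $|\cbki|$; the only remaining randomness is the draw of the index set $\cbki$ (I treat the with-replacement case, so the indices are i.i.d.\ uniform on $\{1,\dots,N\}$). For each sampled index $i$ I define the centered matrix
\[
Z_i := \bigl(\nabla^2 f_i(\xki)-\nabla^2 f_i(\txkm)\bigr) - \bigl(\nabla^2 F(\xki)-\nabla^2 F(\txkm)\bigr).
\]
A short rearrangement gives $\thki - \nabla^2 F(\xki) = \frac{1}{|\cbki|}\sum_{i\in\cbki} Z_i + \bigl(\thk - \nabla^2 F(\txkm)\bigr)$. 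Since $\thk = \nabla^2 F(\txkm)$ \emph{exactly} (it is the full Hessian at $\txkm$), the last bracket vanishes, and because the sampled index is uniform, $\E[\nabla^2 f_i(\xki)-\nabla^2 f_i(\txkm)] = \nabla^2 F(\xki)-\nabla^2 F(\txkm)$, i.e.\ $\E[Z_i \mid \xki]=0$. This immediately yields unbiasedness $\E[\thki\mid\xki]=\nabla^2 F(\xki)$ and identifies $\thki-\nabla^2 F(\xki)$ with the average of i.i.d.\ mean-zero matrices.

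Next I would bound the second and fourth central moments of a single summand. Writing $r:=\|\xki-\txkm\|$, the Frobenius-norm Lipschitz assumption~\eqref{assumption: Hessian-Lip} gives $\|\nabla^2 f_i(\xki)-\nabla^2 f_i(\txkm)\|_F\le \rho r$, and averaging over $i$ gives the same bound for $\nabla^2 F(\xki)-\nabla^2 F(\txkm)$; hence $\|Z_i\|_F \le 2\rho r$ almost surely and, since a variance never exceeds the raw second moment, $\E[\|Z_i\|_F^2\mid\xki]\le \rho^2 r^2$. For the second moment of the average, independence annihilates the cross terms, so $\E[\|\tfrac{1}{|\cbki|}\sum_i Z_i\|_F^2\mid\xki] = \frac{1}{|\cbki|}\E[\|Z_1\|_F^2\mid\xki]\le \frac{\rho^2}{|\cbki|} r^2$, as claimed. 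For the fourth moment, Lemma~\ref{lemma:var-with-pw4} yields $\E[\|\tfrac{1}{|\cbki|}\sum_i Z_i\|_F^4\mid\xki]\le \frac{3}{|\cbki|^2}\E[\|Z_1\|_F^4\mid\xki]$, so it remains to control $\E[\|Z_1\|_F^4]$. The clean step here is to set $X:=\|Z_1\|_F^2$, observe $0\le X\le 4\rho^2 r^2$ and $\E[X]\le \rho^2 r^2$ from the previous estimates, and use $X^2\le 4\rho^2 r^2\,X$ to conclude $\E[\|Z_1\|_F^4\mid\xki]=\E[X^2]\le 4\rho^4 r^4$; together with the factor~$3$ this gives constant $12$, which certainly implies the stated bound with constant $33$.

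The gradient bounds follow the identical template: I would set $Y_i := (\nabla f_i(\xki)-\nabla f_i(\txkm)) - (\nabla F(\xki)-\nabla F(\txkm))$, which are i.i.d.\ mean-zero with $\tgki-\nabla F(\xki)=\frac{1}{|\cski|}\sum_{i\in\cski}Y_i$ (again using $\tgk=\nabla F(\txkm)$ exactly), giving $\E[\tgki\mid\xki]=\nabla F(\xki)$; the gradient-Lipschitz assumption~\eqref{assumption: gradient-Lip} gives $\E[\|Y_1\|^2\mid\xki]\le L^2 r^2$, and vanishing of the cross terms yields the $\frac{L^2}{|\cski|}r^2$ bound. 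The place that needs the most care—and the main obstacle—is the fourth-moment estimate, because the application of Lemma~\ref{lemma:var-with-pw4} relies crucially on the summands being \emph{independent} and \emph{exactly} mean-zero, which holds for sampling with replacement but fails without replacement. For the without-replacement case I would either invoke a negative-dependence version of the moment bound or note that removing replacement only shrinks the variance, leaving the second-moment bounds intact while handling the fourth moment by a separate, slightly more delicate argument. Finally, driving the per-summand fourth moment down to a small multiple of $\rho^4 r^4$ via the $X^2\le 4\rho^2 r^2 X$ trick, rather than the cruder $\|Z_1\|_F^4\le 16\rho^4 r^4$, is what keeps the resulting constant modest.
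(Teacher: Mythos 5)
Your proof is correct and follows essentially the same route as the paper's: the same centering $Z_j = z_j - \E[z_j\,|\,\xki]$ with $z_j=\nabla^2 f_j(\xki)-\nabla^2 f_j(\txkm)$, the same use of unbiasedness/independence to kill cross terms for the second moment, and the same invocation of Lemma~\ref{lemma:var-with-pw4} for the fourth moment of the average (with the gradient case handled by the identical template, which the paper simply omits). The only deviation is your bound $\E\bigl[\|Z_1\|_F^4\,\big|\,\xki\bigr]\leq 4\rho^4\|\xki-\txkm\|^4$ obtained from the almost-sure bound $\|Z_1\|_F^2\leq 4\rho^2\|\xki-\txkm\|^2$ together with $\E\bigl[\|Z_1\|_F^2\,\big|\,\xki\bigr]\leq\rho^2\|\xki-\txkm\|^2$, which is slightly sharper than the paper's term-by-term expansion yielding the constant $11$; your resulting constant $12$ therefore implies the stated bound with constant $33$.
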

As a result, we have the following corollary, whose proof is given in
Appendix~\ref{appendix:var-Hess-grad-xi}.
\begin{corollary}
	\label{corollary:var-with-Hess-g-xi}
	Let $\thki$, $\tgki$, $\xiki$ and the mini-batch index sets $\cbki$ and $\cski$ be generated according to Algorithm~\ref{algo:SVRC}. Then we have 
	\begin{align*}
        \mathbb{E}\left[\bigl\|\thki-\nabla^2 F(\xki)\bigr\|_F \,\big|\, \xki\right] &\leq \rho\left(\|\xikim\| + \epsilon^{1/2}\right),\\
        \mathbb{E}\left[\bigl\|\thki-\nabla^2 F(\xki)\bigr\|_F^2 \,\big|\, \xki\right] &\leq \rho^2\left(\|\xikim\|^2 + \epsilon\right),\\
        \mathbb{E}\left[\bigl\|\thki-\nabla^2 F(\xki)\bigr\|_F^3 \,\big|\, \xki\right] &\leq 33^{3/4}\rho^3\left(\|\xikim\|^3 + \epsilon^{3/2}\right),
	\end{align*}
	and 
	\begin{align*}
        \mathbb{E}\left[\bigl\|\tgki-\nabla F(\xki)\bigr\|_F \,\big|\, \xki\right] &\leq L\left(\|\xikim\|^2 + \epsilon\right), \\
        \mathbb{E}\left[\bigl\|\tgki-\nabla F(\xki)\bigr\|_F^{3/2} \,\big|\, \xki\right] &\leq L^{3/2}\left(\|\xikim\|^3 + \epsilon^{3/2}\right).
	\end{align*}
\end{corollary}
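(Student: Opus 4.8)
The plan is to read off each bound directly from Lemma~\ref{lemma:var-with-Hess-g} by substituting the adaptive sample-size lower bounds~\eqref{eqn:sample-sizes} and then using Jensen's inequality to descend to the lower-order moments appearing in the statement. The two elementary facts I will use repeatedly are $\sqrt{a^2+b^2}\leq a+b$ and $(\max\{a,b\})^p=\max\{a^p,b^p\}\leq a^p+b^p$ for $a,b\geq 0$ and $p>0$.

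First, for the Hessian second-moment bound I would start from $\E[\|\thki-\nabla^2 F(\xki)\|_F^2\mid\xki]\leq \frac{\rho^2}{|\cbki|}\|\xki-\txkm\|^2$ and insert $|\cbki|\geq \|\xki-\txkm\|^2/\epsilon_H$. The factors $\|\xki-\txkm\|^2$ cancel exactly, leaving $\rho^2\epsilon_H=\rho^2\max\{\|\xikim\|^2,\epsilon\}\leq \rho^2(\|\xikim\|^2+\epsilon)$. The first-moment bound then follows by Jensen's inequality, $\E[\|\cdot\|_F\mid\xki]\leq(\E[\|\cdot\|_F^2\mid\xki])^{1/2}\leq\rho\sqrt{\|\xikim\|^2+\epsilon}$, and the inequality $\sqrt{a^2+b^2}\leq a+b$ converts the right side to $\rho(\|\xikim\|+\epsilon^{1/2})$.

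For the Hessian third-moment bound I would instead begin from the fourth-moment estimate $\E[\|\thki-\nabla^2 F(\xki)\|_F^4\mid\xki]\leq \frac{33\rho^4}{|\cbki|^2}\|\xki-\txkm\|^4$, so that inserting $|\cbki|^{-2}\leq\epsilon_H^2/\|\xki-\txkm\|^4$ cancels the $\|\xki-\txkm\|^4$ factors and yields $33\rho^4\epsilon_H^2$. Applying Jensen with the concave map $t\mapsto t^{3/4}$ then gives $33^{3/4}\rho^3\epsilon_H^{3/2}$, and the max-power identity with $p=3/2$ turns $\epsilon_H^{3/2}=(\max\{\|\xikim\|^2,\epsilon\})^{3/2}$ into $\|\xikim\|^3+\epsilon^{3/2}$. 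Choosing to start from the fourth moment here (rather than the second) is what makes the $\|\xki-\txkm\|$ powers cancel cleanly against $\epsilon_H$; this matching of the starting moment to the target exponent is the only point that requires any care.

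The two gradient bounds follow from the identical template applied to $\E[\|\tgki-\nabla F(\xki)\|_F^2\mid\xki]\leq\frac{L^2}{|\cski|}\|\xki-\txkm\|^2$ with $|\cski|\geq\|\xki-\txkm\|^2/\epsilon_g$, giving $L^2\epsilon_g=L^2\max\{\|\xikim\|^4,\epsilon^2\}$. Jensen with $t\mapsto t^{1/2}$ yields $L\max\{\|\xikim\|^2,\epsilon\}\leq L(\|\xikim\|^2+\epsilon)$ for the first-moment bound, while Jensen with $t\mapsto t^{3/4}$ gives $L^{3/2}(\max\{\|\xikim\|^4,\epsilon^2\})^{3/4}=L^{3/2}\max\{\|\xikim\|^3,\epsilon^{3/2}\}\leq L^{3/2}(\|\xikim\|^3+\epsilon^{3/2})$ for the $3/2$-moment bound. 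Since every step is either a substitution of the sample-size bound or an application of Jensen followed by the max-power inequality, there is no genuine obstacle; the result is a direct corollary of Lemma~\ref{lemma:var-with-Hess-g}.
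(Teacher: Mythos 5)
Your proposal is correct and matches the paper's own proof essentially step for step: substitute the sample-size lower bounds from~\eqref{eqn:sample-sizes} into the moment bounds of Lemma~\ref{lemma:var-with-Hess-g} (using the fourth moment as the starting point for the cubic bound), then apply Jensen's inequality with the concave maps $t\mapsto t^{1/2}$ and $t\mapsto t^{3/4}$, and finally relax the max to a sum. You even supply the gradient-bound details that the paper explicitly omits ``for simplicity,'' and your minor reordering in the first-moment Hessian bound (relaxing $\epsilon_H$ to a sum before taking the square root, via $\sqrt{a^2+b^2}\leq a+b$) is an immaterial variation.
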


Now we are ready to present the descent property of Algorithm~\ref{algo:SVRC}.
\begin{lemma}
\label{lemma:descent-finite}
Suppose the sequence $\{\xki\}^{k = 1,...,K}_{i = 1,...,m}$ is generated by Algorithm \ref{algo:SVRC}. Then the following descent property holds  
\be
\label{lm:descent-finite}
\E\left[F(\xkie)\Big|\xki\right] 
\leq F(\xki)-\left(\frac{\sigma}{4} - \frac{\rho}{2} - \frac{L}{3}\right)
\E\left[\bigl\|\xiki\bigr\|^3\Big|\xki\right] 
+ \left(\frac{5\rho}{2}+\frac{2L}{3}\right)
  \left(\bigl\|\xikim\bigr\|^3+\epsilon^{3/2}\right).
\ee
\end{lemma}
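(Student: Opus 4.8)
The plan is to start from the cubic (Hessian-Lipschitz) upper bound, rewrite it so that the cubic subproblem objective appears explicitly, exploit the optimality of $\xiki$ to make that objective strictly negative, and finally absorb the sampling errors using Corollary~\ref{corollary:var-with-Hess-g-xi}. First I would invoke Assumption~\ref{assumption:Lips-finite}: since $\|\cdot\|\le\|\cdot\|_F$, the Hessian $\nabla^2 F$ is $\rho$-Lipschitz in the operator norm, so the standard cubic descent bound gives
\[
F(\xkie) \le F(\xki) + \langle \nabla F(\xki), \xiki\rangle + \frac12 \langle \nabla^2 F(\xki)\xiki, \xiki\rangle + \frac{\rho}{6}\|\xiki\|^3.
\]
Writing $\nabla F(\xki) = \tgki + (\nabla F(\xki)-\tgki)$ and $\nabla^2 F(\xki) = \thki + (\nabla^2 F(\xki)-\thki)$, the right-hand side splits into the subproblem objective $m_k(\xiki) := \langle \tgki, \xiki\rangle + \frac12\langle \thki\xiki, \xiki\rangle + \frac{\sigma}{6}\|\xiki\|^3$, two error terms, and the residual $\frac{\rho-\sigma}{6}\|\xiki\|^3$.

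Next I would use that $\xiki$ minimizes $m_k$. Its stationarity condition $\tgki + \thki\xiki + \frac{\sigma}{2}\|\xiki\|\xiki = 0$ together with the second-order condition $\thki + \frac{\sigma}{2}\|\xiki\|\, I \succeq 0$ yields, after substituting the former into $m_k(\xiki)$, the exact identity $m_k(\xiki) = -\frac12\langle \thki\xiki,\xiki\rangle - \frac{\sigma}{3}\|\xiki\|^3$, and then the latter gives $\langle \thki\xiki,\xiki\rangle \ge -\frac{\sigma}{2}\|\xiki\|^3$, so that $m_k(\xiki) \le -\frac{\sigma}{12}\|\xiki\|^3$. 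This bound is deterministic, i.e.\ it holds for every realization of the index sets.

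The main obstacle is that $\xiki$ is itself a function of the random sets $\cski,\cbki$ (through $\tgki,\thki$), hence correlated with the two error terms, so one cannot take the expectation of the inner products factor by factor. I would resolve this by applying a \emph{scaled} Young's inequality pointwise, before taking any expectation, with the scalings tuned so that the $\|\xiki\|^3$ pieces carry exactly the coefficients $L/3$ and $\rho/3$:
\[
\langle \nabla F(\xki)-\tgki,\xiki\rangle \le \frac{2}{3\sqrt{L}}\bigl\|\tgki-\nabla F(\xki)\bigr\|^{3/2} + \frac{L}{3}\|\xiki\|^3,
\]
\[
\frac12\langle(\nabla^2 F(\xki)-\thki)\xiki,\xiki\rangle \le \frac{1}{6\rho^2}\bigl\|\thki-\nabla^2 F(\xki)\bigr\|_F^3 + \frac{\rho}{3}\|\xiki\|^3,
\]
where I first use Cauchy--Schwarz and $\|\cdot\|\le\|\cdot\|_F$ on the Hessian term. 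After this step the residual-only quantities no longer multiply $\xiki$.

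Finally I would take the conditional expectation $\E[\,\cdot\mid\xki]$. The $\|\xiki\|^3$ coefficients aggregate to $-\frac{\sigma}{12} + \frac{L}{3} + \frac{\rho}{3} + \frac{\rho-\sigma}{6} = -\bigl(\frac{\sigma}{4} - \frac{\rho}{2} - \frac{L}{3}\bigr)$, which matches the claimed descent coefficient (the $\sigma$ terms combine as $-\frac{\sigma}{12}-\frac{\sigma}{6}=-\frac{\sigma}{4}$, and the $\rho$ terms as $\frac{\rho}{3}+\frac{\rho}{6}=\frac{\rho}{2}$). The two decoupled residual terms are bounded directly by Corollary~\ref{corollary:var-with-Hess-g-xi}, giving $\frac{2}{3\sqrt{L}}\cdot L^{3/2} = \frac{2L}{3}$ and $\frac{1}{6\rho^2}\cdot 33^{3/4}\rho^3 = \frac{33^{3/4}}{6}\rho$, each multiplying $\bigl(\|\xikim\|^3+\epsilon^{3/2}\bigr)$. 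The only remaining arithmetic is the crude estimate $\frac{33^{3/4}}{6}\le\frac{5}{2}$ (equivalently $33^3 < 15^4$), which turns the Hessian constant into $\frac{5\rho}{2}$ and completes the proof of~\eqref{lm:descent-finite}.
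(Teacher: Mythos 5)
Your proposal is correct and takes essentially the same route as the paper's proof: both rest on the Hessian-Lipschitz cubic upper bound, the two optimality conditions of the CR subproblem, the same scaled Young inequalities producing the $L/3$ and $\rho/3$ (hence $\rho/2$) coefficients applied pointwise before taking expectations, and Corollary~\ref{corollary:var-with-Hess-g-xi} together with the estimate $33^{3/4}/6\le 5/2$. The only difference is organizational: you isolate the subproblem objective and show its value at $\xiki$ is at most $-\frac{\sigma}{12}\bigl\|\xiki\bigr\|^3$, whereas the paper adds the zero quantity $(\xiki)^T\bigl(\tgki+\thki\xiki+\frac{\sigma}{2}\bigl\|\xiki\bigr\|\xiki\bigr)$ and then discards the positive-semidefinite quadratic form --- algebraically these are identical uses of the same two conditions and lead to the same intermediate inequality.
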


As a remark, as long as 
$\frac{\sigma}{4}-\frac{\rho}{2}-\frac{L}{3} > \frac{5\rho}{2} + \frac{2L}{3}$,
the variance term associated with $\|\xikim\|^3$ in current step can be dominated by the descent associated
with $\|\xikim\|^3$ in the previous step. 
At the first step of each stage, the variance term is 0, 
hence we define $\xi^k_{-1}=0$ for a unified expression.

\begin{proof}
Consider the cubic regularization subproblem in Algorithm~\ref{algo:SVRC}:
\[
\xiki = \arg\min_{\xi} ~\Bigl\{\xi^T\tgki + \frac{1}{2}\xi^T\thki\xi + \frac{\sigma}{6}\|\xi\|^3\Bigr\}. 
\]
Its optimality conditions imply (see \cite{Cubic:Nesterov})
\begin{align}
\tgki + \thki\xiki+ \frac{\sigma}{2}\bigl\|\xiki\bigr\|\xiki &= 0, 
\label{eqn:subopt}\\
\thki + \frac{\sigma}{2}\bigl\|\xiki\bigr\|I &\succeq 0. 
\label{eqn:subopt-psd}
\end{align}
Then by the Lipschitz continuous condition of the objective function, we have 
\begin{eqnarray}  
F(\xkie) & \leq & F(\xki) + \nabla F(\xki)^T\xiki + \half(\xiki)^T\nabla^2 F(\xki)\xiki + \frac{\rho}{6}\bigl\|\xiki\bigr\|^3 \nonumber \\
	& = & F(\xki) + \nabla F(\xki)^T\xiki + \half(\xiki)^T\nabla^2 F(\xki)\xiki + \frac{\rho}{6}\bigl\|\xiki\bigr\|^3   - (\xiki)^T\left(\tgki + \thki\xiki + \frac{\sigma}{2}\bigl\|\xiki\bigr\|\xiki\right) \nonumber\\
	& = & F(\xki) + \left(\nabla F(\xki)-\tgki\right)^T\xiki + \half(\xiki)^T\left(\nabla^2 F(\xki)-\thki\right)\xiki - \left(\frac{\sigma}{4} - \frac{\rho}{6}\right)\bigl\|\xiki\bigr\|^3 \nonumber\\
	&&   - \half(\xiki)^T\left(\thki + \frac{\sigma}{2}\bigl\|\xiki\bigr\|I\right)\xiki \nonumber\\
	& \leq & F(\xki) + \bigl\|\xiki\bigr\|\,\bigl\|\tgki-\nabla F(\xki)\bigr\| + \half\bigl\|\nabla^2 F(\xki)-\thki\bigr\|_F\bigl\|\xiki\bigr\|^2  - \left(\frac{\sigma}{4} - \frac{\rho}{6}\right)\bigl\|\xiki\bigr\|^3,
\label{lm:descent-finite-2}
\end{eqnarray}
where the second line is due to~\eqref{eqn:subopt} and the fifth line is due 
to~\eqref{eqn:subopt-psd}. 
By the following variant of Young's inequality 
\[
ab \leq \frac{(a\theta )^p}{p} + \frac{(b/\theta)^q}{q}, \qquad 
\forall\; a,b,p,q,\theta>0 
\quad \mathrm{and}\quad \frac{1}{p} + \frac{1}{q} = 1,
\] 
we have
\beaa
\bigl\|\xiki\bigr\|\,\bigl\|\tgki-\nabla F(\xki)\bigr\| 
& \leq & \frac{\bigl(L^{1/3}\bigl\|\xiki\bigr\|\bigr)^3}{3} + \frac{\bigl(L^{-1/3}\|\nabla F(\xki) - \tgki\|\bigr)^{3/2}}{3/2} \\
& = & \frac{L}{3}\bigl\|\xiki\bigr\|^3 + \frac{2}{3\sqrt{L}}\bigl\|\nabla F(\xki) - \tgki\bigr\|_F^{3/2},
\eeaa
and 
\beaa
\bigl\|\xiki\bigr\|^2\bigl\|\thki-\nabla^2 F(\xki)\bigr\| 
& \leq & \frac{\left(\rho^{2/3}\bigl\|\xiki\bigr\|\right)^{3/2}}{3/2} + \frac{\left(\rho^{-2/3}\bigl\|\nabla^2 F(\xki) - \thki\bigr\|\right)^{3}}{3} \\
& = &\frac{2\rho}{3}\bigl\|\xiki\bigr\|^3 + \frac{1}{3\rho^2}\bigl\|\nabla^2 F(\xki) - \thki\bigr\|^{3}.
\eeaa
Consequently, 
\bea
\E\left[F(\xkie)\big|\xki\right] 
& \leq & F(\xki) - \left(\frac{\sigma}{4} - \frac{\rho}{2} - \frac{L}{3}\right)\E\left[\bigl\|\xiki\bigr\|^3\big|\xki\right] + \frac{1}{6\rho^2}\E\left[\bigl\|\nabla^2 F(\xki)-\thki\bigr\|_F^3\big|\xki\right] \nonumber \\
&& +\, \frac{2}{3\sqrt{L}}\E\left[\bigl\|\nabla F(\xki) - \tgki\bigr\|^{3/2}\big|\xki\right].
\label{lm:descent-finite-1}
\eea
Since $g^k_0 = \nabla F(x^k_0)$ and $H^k_0= \nabla^2 F(x^k_0)$
at the beginning of each stage, we have
\bea
\label{lm:descent-finite-1.0}
\E\left[F(x^k_1)\big|x_0^k\right] 
& \leq & F(x^k_0) - \left(\frac{\sigma}{4} - \frac{\rho}{6}\right)\E\left[\bigl\|\xi_0^k\bigr\|^3\big|x_0^k\right].
\eea
By substituting the variance bounds in Corollary \ref{corollary:var-with-Hess-g-xi} into \eqref{lm:descent-finite-1}, we get
\beaa
\E\left[F(\xkie)\big|\xki\right] 
& \leq & F(\xki) - \left(\frac{\sigma}{4} - \frac{\rho}{2} - \frac{L}{3}\right)\E\left[\bigl\|\xiki\bigr\|^3\big|\xki\right] + \biggl(\frac{33\tfor\rho}{6}+\frac{2L}{3}\biggr)\left(\bigl\|\xikim\bigr\|^3 + \epsilon^{3/2}\right) \nonumber\\
& \leq & F(\xki) - \left(\frac{\sigma}{4} - \frac{\rho}{2} - \frac{L}{3}\right)\E\left[\bigl\|\xiki\bigr\|^3\big|\xki\right] + \biggl(\frac{5\rho}{2}+\frac{2L}{3}\biggr)\left(\bigl\|\xikim\bigr\|^3 + \epsilon^{3/2}\right). \nonumber
\eeaa
This completes the proof.
\end{proof}

Next, we give a lemma that connects 
$\E[\|\nabla f(\xkie)\|]$ and $\E[\lambda_{\min}(\nabla^2f(\xkie))]$ 
with $\xiki$ and $\xikim$, whose proof is given in 
Appendix~\ref{appendix:relation-finite}.
\begin{lemma}
\label{lemma:relation-finite}
Let $\xkie$, $\xikim$ and $\xiki$ be generated by Algorithm~\ref{algo:SVRC}. 
Then the following relations hold
\begin{align}
\E\left[\bigl\|\nabla F(\xkie)\bigr\|\right]
&\leq \left(\rho+\frac{\sigma}{2}\right)\E\left[\bigl\|\xiki\bigr\|^2\right] + \left(\frac{\rho}{2}+L\right)\left(\E\left[\bigl\|\xikim\bigr\|^2\right] + \epsilon\right), \nonumber\\
\mathbb{E}\left[\lambda_{\min}\bigl(\nabla^2f(\xkie)\bigr)\right]  
&\geq -\left(\rho+\frac{\sigma}{2}\right)\mathbb{E}\left[\bigl\|\xiki\bigr\|\right] -\rho\left(\mathbb{E}\left[\bigl\|\xikim\bigr\|\right]+\epsilon^{1/2}\right). \nonumber
\end{align}
\end{lemma}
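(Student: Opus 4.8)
The plan is to establish the two inequalities separately, in both cases combining the second-order optimality conditions \eqref{eqn:subopt} and \eqref{eqn:subopt-psd} of the CR subproblem with the moment bounds of Corollary~\ref{corollary:var-with-Hess-g-xi}. For the gradient bound, I would start from $\xkie=\xki+\xiki$ and the $\rho$-Lipschitz continuity of $\nabla^2 F$, which via the integral form of Taylor's theorem gives $\nabla F(\xkie)=\nabla F(\xki)+\nabla^2 F(\xki)\xiki+R$ with $\|R\|\leq\frac{\rho}{2}\|\xiki\|^2$. I would then add and subtract $\tgki$ and $\thki\xiki$ and invoke \eqref{eqn:subopt} to substitute $\tgki+\thki\xiki=-\frac{\sigma}{2}\|\xiki\|\xiki$. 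This rewrites $\nabla F(\xkie)$ as the sum of the gradient error $\nabla F(\xki)-\tgki$, the matrix–vector error $(\nabla^2 F(\xki)-\thki)\xiki$, the cubic term $-\frac{\sigma}{2}\|\xiki\|\xiki$, and the remainder $R$. The triangle inequality then produces a bound in terms of $\|\tgki-\nabla F(\xki)\|$, the product $\|\thki-\nabla^2 F(\xki)\|\,\|\xiki\|$, and $\bigl(\frac{\sigma}{2}+\frac{\rho}{2}\bigr)\|\xiki\|^2$.

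The delicate term is $\|\thki-\nabla^2 F(\xki)\|\,\|\xiki\|$: conditioned on $\xki$ both factors are random, and they are \emph{correlated} because $\xiki$ is the minimizer determined by the pair $(\tgki,\thki)$. To decouple them I would apply Young's inequality $ab\leq\frac{1}{2\rho}a^2+\frac{\rho}{2}b^2$ with $a=\|\thki-\nabla^2 F(\xki)\|$ and $b=\|\xiki\|$, converting the product into a pure squared-Hessian-error term plus an extra $\frac{\rho}{2}\|\xiki\|^2$. Taking the conditional expectation and substituting the first-moment gradient bound $L(\|\xikim\|^2+\epsilon)$ and the second-moment Hessian bound $\rho^2(\|\xikim\|^2+\epsilon)$ from Corollary~\ref{corollary:var-with-Hess-g-xi} (the latter giving $\frac{1}{2\rho}\cdot\rho^2(\|\xikim\|^2+\epsilon)=\frac{\rho}{2}(\|\xikim\|^2+\epsilon)$), and then collecting the three $\|\xiki\|^2$ contributions as $\frac{\rho}{2}+\frac{\sigma}{2}+\frac{\rho}{2}=\rho+\frac{\sigma}{2}$, yields the claimed first inequality after a final total expectation.

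For the eigenvalue bound I would work entirely with Löwner (operator) inequalities, so no decoupling is needed. From $\|\xkie-\xki\|=\|\xiki\|$ and the $\rho$-Lipschitz Hessian (using spectral $\leq$ Frobenius norm), one gets $\nabla^2 F(\xkie)\succeq\nabla^2 F(\xki)-\rho\|\xiki\|I$. Writing $\nabla^2 F(\xki)=\thki+(\nabla^2 F(\xki)-\thki)$ and invoking the second optimality condition \eqref{eqn:subopt-psd}, namely $\thki\succeq-\frac{\sigma}{2}\|\xiki\|I$, gives $\nabla^2 F(\xki)\succeq-\bigl(\frac{\sigma}{2}\|\xiki\|+\|\nabla^2 F(\xki)-\thki\|\bigr)I$. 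Chaining the two estimates and passing to the smallest eigenvalue produces $\lambda_{\min}(\nabla^2 F(\xkie))\geq-(\rho+\frac{\sigma}{2})\|\xiki\|-\|\nabla^2 F(\xki)-\thki\|$; taking conditional expectation, bounding the spectral norm by the Frobenius norm, and applying the first-moment Hessian bound $\rho(\|\xikim\|+\epsilon^{1/2})$ from Corollary~\ref{corollary:var-with-Hess-g-xi}, followed by a total expectation, gives the second inequality.

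The main obstacle is precisely the correlation between $\thki$ and $\xiki$ in the cross term of the gradient bound; the Young's-inequality split is what renders it tractable, and it is also what explains the appearance of the \emph{second}-moment (rather than merely first-moment) Hessian bound and why the coefficient on $\|\xiki\|^2$ lands at $\rho+\frac{\sigma}{2}$ instead of $\frac{\rho}{2}+\frac{\sigma}{2}$. The eigenvalue half is comparatively routine once one remembers to relax the Frobenius-norm moment bound to the spectral norm.
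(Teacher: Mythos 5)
Your proposal is correct and follows essentially the same route as the paper's own proof: the same Taylor-plus-optimality-condition decomposition of $\nabla F(\xkie)$, the same Young's inequality split $\bigl\|\thki-\nabla^2 F(\xki)\bigr\|_F\bigl\|\xiki\bigr\|\leq\frac{1}{2\rho}\bigl\|\thki-\nabla^2 F(\xki)\bigr\|_F^2+\frac{\rho}{2}\bigl\|\xiki\bigr\|^2$ feeding the second-moment Hessian bound, and the same chaining of the Lipschitz-Hessian and \eqref{eqn:subopt-psd} eigenvalue estimates. No gaps; the only differences are expository (e.g., phrasing the eigenvalue argument via L\"owner orderings rather than directly through $\lambda_{\min}$).
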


Finally we are ready to present the main result on iteration complexity.
\begin{theorem}[Iteration Complexity]
\label{theorem:cvg-finite}
Choose the parameter $\sigma> 13\rho + 4L$ and let $k^*,i^*$ be given by 
either output option in Algorithm~\ref{algo:SVRC} after running for $K$ stages,
then
\begin{equation}\label{eqn:two-xi-bound}
\E\left[\bigl\|\xistar\bigr\|^3+\bigl\|\ximstar\bigr\|^3\right]  
~\leq~ \frac{2\left(F(\tx^0) - F^*\right)}{Km(\sigma/4 - 3\rho - L)}
+\frac{5\rho+4L/3}{\sigma/4 - 3\rho - L}\epsilon^{3/2},
\end{equation}
where $F^\star=\min_x F(x)$.
As a result, 
\begin{align}
\E\left[\bigl\|\nabla F(\xkiestar)\bigr\|\right] 
&\leq C^g_1(Km)^{-2/3}+C^g_2\epsilon,
\label{eqn:1st-order-bound}\\
\mathbb{E}\left[\lambda_{\min}\bigl(\nabla^2 F(\xkiestar)\bigr)\right] 
&\geq -C^H_1(Km)^{-1/3} - C^H_2\epsilon^{1/2},
\label{eqn:2nd-order-bound}
\end{align}
where the constants satisfy
\begin{align*}
    C^g_1 &= \cO\left((\rho+L)^{1/3}\left(F(\tx^0)-F^\star\right)^{2/3}\right), 
    \qquad C^g_2 = \cO(\rho+L), \\
    C^H_1 &= \cO\left((\rho+L)^{2/3}\left(F(\tx^0)-F^\star\right)^{1/3}\right),
    \qquad C^H_2 = \cO(\rho+L).
\end{align*}
%and $C^g_2 = \cO(\rho+L)$ and $C^H_2 = \cO(\rho+L)$.
\end{theorem}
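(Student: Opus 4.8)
The plan is to run a two-level telescoping argument on the descent inequality of Lemma~\ref{lemma:descent-finite}, and then convert the resulting bound on $\E[\|\xistar\|^3+\|\ximstar\|^3]$ into the first- and second-order guarantees via Lemma~\ref{lemma:relation-finite}. Write $\alpha=\frac{\sigma}{4}-\frac{\rho}{2}-\frac{L}{3}$ for the descent coefficient and $\beta=\frac{5\rho}{2}+\frac{2L}{3}$ for the variance coefficient, so that $\alpha-\beta=\frac{\sigma}{4}-3\rho-L$ is strictly positive (in fact at least $\rho/4$) under the hypothesis $\sigma>13\rho+4L$. I would first take full expectations in~\eqref{lm:descent-finite} through the tower property and sum over the inner loop $t=0,\dots,m-1$ of a fixed stage~$k$; the successive function values telescope, leaving $\E[F(x^k_m)]-\E[F(x^k_0)]$ on the left.

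The crux is the observation flagged in the remark after Lemma~\ref{lemma:descent-finite}: the lagged variance term is paid for by the previous step's descent. Since $\xi^k_{-1}=0$, reindexing gives $\sum_{t=0}^{m-1}\|\xikim\|^3=\sum_{t=0}^{m-2}\|\xiki\|^3\leq\sum_{t=0}^{m-1}\|\xiki\|^3$, so combining the descent sum (coefficient $-\alpha$) with the variance sum (coefficient $+\beta$) leaves a net coefficient of at most $-(\alpha-\beta)$ on $\sum_{t=0}^{m-1}\E[\|\xiki\|^3]$, plus the residual $\beta m\epsilon^{3/2}$. This yields
\[
(\alpha-\beta)\sum_{t=0}^{m-1}\E\bigl[\|\xiki\|^3\bigr]\leq \E\bigl[F(x^k_0)\bigr]-\E\bigl[F(x^k_m)\bigr]+\beta m\epsilon^{3/2}.
\]
Because $x^k_0=\txkm=x^{k-1}_m$, summing over $k=1,\dots,K$ telescopes the potential across stages as well, collapsing to $F(\tx^0)-\E[F(\tx^K)]\leq F(\tx^0)-F^\star$. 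Rearranging produces $\sum_{k,t}\E[\|\xiki\|^3]\leq \frac{F(\tx^0)-F^\star}{\alpha-\beta}+\frac{\beta Km}{\alpha-\beta}\epsilon^{3/2}$.

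To reach~\eqref{eqn:two-xi-bound}, I would bound $\sum_{k,t}\E[\|\xiki\|^3+\|\xikim\|^3]\leq 2\sum_{k,t}\E[\|\xiki\|^3]$ (again using $\xi^k_{-1}=0$ for the lagged sum) and then invoke the output rule: for Option~2 the pair $(k^*,t^*)$ is uniform over the $Km$ indices, so $\E[\|\xistar\|^3+\|\ximstar\|^3]$ is the average over all pairs, while for Option~1 the minimum is no larger than that average. Dividing by $Km$ gives exactly~\eqref{eqn:two-xi-bound}, with denominator $\sigma/4-3\rho-L$ and the constant $5\rho+4L/3=2\beta$ on the $\epsilon^{3/2}$ term. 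Finally, writing $M:=\E[\|\xistar\|^3+\|\ximstar\|^3]$, power-mean monotonicity gives $\E[\|\xistar\|^2],\E[\|\ximstar\|^2]\leq M^{2/3}$ and $\E[\|\xistar\|],\E[\|\ximstar\|]\leq M^{1/3}$; substituting these into Lemma~\ref{lemma:relation-finite} and using subadditivity $(x+y)^p\leq x^p+y^p$ for $p\in\{1/3,2/3\}$ to split the $(Km)^{-1}$ and $\epsilon^{3/2}$ parts of~\eqref{eqn:two-xi-bound} turns $M^{2/3}$ and $M^{1/3}$ into the advertised $(Km)^{-2/3},(Km)^{-1/3},\epsilon,\epsilon^{1/2}$ terms, yielding~\eqref{eqn:1st-order-bound} and~\eqref{eqn:2nd-order-bound}. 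The stated orders of $C^g_1,C^g_2,C^H_1,C^H_2$ then drop out by tracking that $\sigma=\Theta(\rho+L)$ forces $\alpha-\beta=\Theta(\rho+L)$, collapsing the prefactors $\rho+\sigma/2$ together with the powers of $A=2(F(\tx^0)-F^\star)/(\alpha-\beta)$ and $B=2\beta/(\alpha-\beta)$.

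I expect the main obstacle to be the bookkeeping in the double telescoping, specifically making the cancellation between the step-$t$ descent and the step-$(t-1)$ variance airtight at the two kinds of boundaries: the inner-loop start, where $\xi^k_{-1}=0$ incurs no variance (matching~\eqref{lm:descent-finite-1.0}), and the stage transitions, where $x^k_0=x^{k-1}_m$ must be used to chain the potential. Everything else (Jensen, subadditivity of concave powers, and constant tracking) is routine.
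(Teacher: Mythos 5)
Your proposal is correct and follows essentially the same route as the paper's proof: telescoping the descent lemma over the inner loop with the lagged variance term absorbed by the previous step's descent (handling the $\xi^k_{-1}=0$ and stage-transition boundaries exactly as the paper does), chaining stages via $x^k_0=\tilde{x}^{k-1}$, bounding the output by the average for both options, and then converting via Lemma~\ref{lemma:relation-finite} with Jensen and subadditivity of $x^\theta$. The only cosmetic difference is that you apply Jensen to each moment $\E[\|\xi\|^2]\leq M^{2/3}$ separately where the paper first uses H\"older to get the prefactor $((\rho+\sigma/2)^3+(\rho/2+L)^3)^{1/3}$, but since the paper then relaxes this to the sum $(\rho+\sigma/2)+(\rho/2+L)$ anyway, the resulting constants are identical.
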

\begin{remark}
If we choose $K$ and $m$ such that $Km = \cO\bigl(\epsilon^{-3/2}\bigr)$, 
then within $\cO\bigl(\epsilon^{-3/2}\bigr)$ iterations, 
Algorithm \ref{algo:SVRC} would reach a point $\xstar$ such that 
\[
\E\left[\bigl\|\nabla F(\xstar)\bigr\|\right]\leq \cO(\epsilon),\qquad
\E\left[\lambda_{\min}\bigl(\nabla^2 F(\xstar)\bigr)\right]\geq -\cO\bigl(\sqrt{\epsilon}\bigr).
\]
In other words, the approximate optimality conditions 
in~\eqref{def:2nd-stationary-point} are satisfied in expectation.
\end{remark}
\begin{proof}
First, taking expectation over the whole history of random samples 
for~\eqref{lm:descent-finite} and~\eqref{lm:descent-finite-1.0} 
and summing them up for $t=0,...,m-1$ yield
\begin{eqnarray*}
&&\E\left[F(x_0^k)\right] - \E\left[F(x_m^k)\right] + m\left(\frac{5\rho}{2} + \frac{2L}{3}\right)\epsilon^{3/2} \\
&\geq& \left(\frac{\sigma}{4} - \frac{\rho}{6}\right)\E\left[\bigl\|\xi_0^k\bigr\|^3\right]+\left(\frac{\sigma}{4} - 3\rho - L\right)\sum_{t=1}^{m-2}\E\left[\bigl\|\xiki\bigr\|^3\right] + \left(\frac{\sigma}{4} - \frac{\rho}{2}-\frac{L}{3}\right)\E\left[\bigl\|\xi_{m-1}^k\bigr\|^3\right]\\
& \geq & \left(\frac{\sigma}{4} - 3\rho - L\right)\sum_{t=0}^{m-1}\E\left[\bigl\|\xiki\bigr\|^3\right].
\end{eqnarray*}
Under the assumption $\sigma>13\rho+4L$, we have $\sigma/4-3\rho-L>0$.
Further summing over the stages $k=1,\ldots,K$, we obtain
\begin{equation}\label{eqn:sum-xikt}
\left(\frac{\sigma}{4} - 3\rho - L\right)\sum_{k=1}^K\sum_{t=0}^{m-1}\E\left[\bigl\|\xiki\bigr\|^3\right]
~\leq~ F(\tx^0) - F^\star + Km\left(\frac{5\rho}{2} + \frac{2L}{3}\right)\epsilon^{3/2}.
\end{equation}
For option~1 in the output rule, due to the concavity of $\min$ function, Jensen's inequality gives
\beaa
\E\left[\bigl\|\xistar\bigr\|^3 + \bigl\|\ximstar\bigr\|^3\right] 
& = & \E\Bigl[\min_{i,k}\left(\bigl\|\xiki\bigr\|^3+\bigl\|\ximstar\bigr\|^3\right)\Bigr]\\
&\leq& \min_{i,k}\E\left[\bigl\|\xiki\bigr\|^3+\bigl\|\ximstar\bigr\|^3\right]\\
&\leq& \frac{2\left(F(\tx^0) - F^\star + Km\bigl(5\rho/2 + 2L/3\bigr)\epsilon^{3/2}\right)}{Km\bigl(\sigma/4 - 3\rho - L\bigr)},
\eeaa 
which is precisely~\eqref{eqn:two-xi-bound}.
For option 2, since $k^*$ and $t^*$ are randomly chosen, 
\beaa
\E\left[\bigl\|\xistar\bigr\|^3 + \bigl\|\ximstar\bigr\|^3\right] 
& = &  \frac{2}{Km}\sum_{k=1}^K\sum_{t=0}^{m-1}\E\left[\bigl\|\xiki\bigr\|^3\right]-\frac{1}{Km}\sum_{k=1}^K\E\left[\bigl\|\xi^k_{m-1}\bigr\|^3\right]\\
& \leq & \frac{2\left(F(\tx^0) - F^\star + Km\bigl(5\rho/2 + 2L/3\bigr)\epsilon^{3/2}\right)}{Km(\sigma/4 - 3\rho - L)}.
\eeaa
Therefore, for both options, inequality~\eqref{eqn:two-xi-bound} holds.

Next, we derive guarantees for approximating the first and second-order 
stationary conditions.
According to Lemma \ref{lemma:relation-finite}, 
\begin{equation}\label{eqn:grad-xi-epsilon}
\E\left[\bigl\|\nabla F(\xkiestar)\bigr\|\right]
~\leq~ \left(\rho+\frac{\sigma}{2}\right)\E\left[\bigl\|\xistar\bigr\|^2\right]
 + \left(\frac{\rho}{2}+L\right)\E\left[\bigl\|\ximstar\bigr\|^2\right] 
+ \left(\frac{\rho}{2} + L\right)\epsilon.
\end{equation}
Consequently, we have
\beaa
& & \E\left[ \left(\rho+\frac{\sigma}{2}\right)\bigl\|\xistar\bigr\|^2 + \left(\frac{\rho}{2}+L\right)\bigl\|\ximstar\bigr\|^2\right] \\
& \leq & \left(\left(\rho+\frac{\sigma}{2}\right)^3+\left(\frac{\rho}{2}+L\right)^3\right)^{1/3} \E\left[\left(\left(\|\ximstar\|^2\right)^{3/2}+\left(\|\xistar\|^2\right)^{3/2}\right)^{2/3}\right] \\
& \leq & \left(\left(\rho+\frac{\sigma}{2}\right)^3+\left(\frac{\rho}{2}+L\right)^3\right)^{1/3} \left(\E\left[\bigl\|\ximstar\bigr\|^3+\bigl\|\xistar\bigr\|^3\right]\right)^{2/3} \\
&\leq & \left(\left(\rho+\frac{\sigma}{2}\right)^3+\left(\frac{\rho}{2}+L\right)^3\right)^{1/3} \left(\frac{2(F(\tx^0) - F^\star)}{Km(\sigma/4 - 3\rho - L)}+\frac{5\rho+4L/3}{\sigma/4 - 3\rho - L}\epsilon^{3/2}\right)^{2/3}\\
& \leq & \left(\left(\rho+\frac{\sigma}{2}\right)+\left(\frac{\rho}{2}+L\right)\right) \left(\left(\frac{2(F(\tx^0) - F^\star)}{\sigma/4 - 3\rho -  L}\right)^{2/3}\frac{1}{(Km)^{2/3}} + \left(\frac{5\rho+4L/3}{\sigma/4 - 3\rho - L}\right)^{2/3}\epsilon\right),
\eeaa
where the first inequality is due to H\"older's inequality, 
the second inequality is due to Jensen's inequality, 
the third inequality is due to~\eqref{eqn:two-xi-bound}
and the last inequality is due to the fact that 
$(a+b)^{\theta}\leq a^{\theta}+b^{\theta}$ for all $a,b>0$ and 
$0\leq\theta\leq 1$.
Finally, substituting the above inequality into~\eqref{eqn:grad-xi-epsilon}
yields the desired result in~\eqref{eqn:1st-order-bound}.
	
In order to bound the minimum eigenvalue of the Hessian, we have 
from Lemma \ref{lemma:relation-finite}, 
\begin{equation}\label{eqn:Hess-xi-epsilon}
\mathbb{E}\left[\lambda_{\min}\bigl(\nabla^2 F(\xkiestar)\bigr)\right]  
~\geq~ -\left(\rho+\frac{\sigma}{2}\right)\mathbb{E}\left[\bigl\|\xistar\bigr\|\right] -\rho\mathbb{E}\left[\bigl\|\ximstar\bigr\|\right]- \rho\epsilon^{1/2}.
\end{equation}
Similar to the arguments used for proving the first-order bound, 
\beaa
& & \mathbb{E}\left[\left(\rho+\frac{\sigma}{2}\right)\bigl\|\xistar\bigr\|+ \rho\bigl\|\ximstar\bigr\|\right] \\
& \leq & \left(\left(\rho+\frac{\sigma}{2}\right)^{3/2} + \rho^{3/2}\right)^{2/3}\E\left[\left(\bigl\|\ximstar\bigr\|^3+\bigl\|\xistar\bigr\|^3\right)^{1/3}\right]\\
& \leq & \left(\left(\rho+\frac{\sigma}{2}\right)^{3/2} + \rho^{3/2}\right)^{2/3}\left(\E\left[\bigl\|\ximstar\bigr\|^3+\bigl\|\xistar\bigr\|^3\right]\right)^{1/3}\\
& \leq &  \left(\left(\rho+\frac{\sigma}{2}\right)^{3/2} + \rho^{3/2}\right)^{2/3}\left(\frac{2(F(\tx^0) - F^\star)}{Km(\sigma/4 - 3\rho - L)}+\frac{5\rho+4L/3}{\sigma/4 - 3\rho - L}\epsilon^{3/2}\right)^{1/3}\\
& \leq &  \left(\left(\rho+\frac{\sigma}{2}\right) + \rho\right) \left(\left(\frac{2(F(\tx^0) - F^\star)}{\sigma/4 - 3\rho -  L}\right)^{1/3}\frac{1}{(Km)^{1/3}} + \left(\frac{5\rho+4L/3}{\sigma/4 - 3\rho - L}\right)^{1/3}\epsilon^{1/2}\right).
\eeaa
Combining the inequality above with~\eqref{eqn:Hess-xi-epsilon} gives the 
desired bound in~\eqref{eqn:2nd-order-bound}.
%Hence, 
%\[
%\mathbb{E}\left[\lambda_{\min}\bigl(\nabla^2 F(\xkiestar)\bigr)\right]  
%~\geq~ -C_H(Km)^{-1/3} - C'_H\sqrt{\epsilon},
%\]
%where $C_H = \cO\bigl((\rho+L)^{2/3}\bigl(F(\tx^0)-F^\star\bigr)^{1/3}\bigr)$
%and $C'_H = \cO(\rho+L)$.
\end{proof}

\subsection{Bounding the Hessian sample complexity}
Due to the adaptive mini-batch size rule, the total Hessian sample complexity is not given explicitly. In this subsection, we provide a bound on the complexity of Hessian sampling.  
\begin{theorem} 
\label{theorem:Hess-complexity}
Let the total number of Hessian samples in Algorithm~\ref{algo:SVRC} be $B_H$.  
If we set the length of each stage and number of stages to be  
\begin{equation}
\label{thm:Hess-complexity-m}
m = \mathcal{O}\bigl(N^{1/3}\bigr), \qquad  K = \epsilon^{-3/2}/m,
\end{equation}
then the expectation of $B_H$ taken to reach a second-order $\epsilon$-solution will be 
\begin{equation}
\label{thm:Hess-complexity-0}
\mathbb{E}\left[B_H\right] \leq \mathcal{O}\bigl(N^{2/3}\epsilon^{-3/2}\bigr).
\end{equation}
\end{theorem}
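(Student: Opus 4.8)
The plan is to split the total Hessian count into the full-Hessian evaluations performed once per stage and the mini-batch samples drawn inside the inner loops,
\[
B_H = KN + \sum_{k=1}^{K}\sum_{t=0}^{m-1}|\cbki|.
\]
The first term is immediate: with $m=\cO(N^{1/3})$ and $K=\epsilon^{-3/2}/m$ we get $KN = (\epsilon^{-3/2}/m)\,N = \cO(N^{2/3}\epsilon^{-3/2})$, which already matches the claimed rate. Hence the entire difficulty is to show that the \emph{expected} inner-loop sampling cost is of the same order.

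For the inner-loop term I would first remove the adaptive denominator by the crude but sufficient bound $\epsilon_H=\max\{\|\xikim\|^2,\epsilon\}\geq\epsilon$, together with the fact that it suffices to take the smallest admissible integer batch size $|\cbki|=\lceil \|\xki-\txkm\|^2/\epsilon_H\rceil \leq \|\xki-\txkm\|^2/\epsilon + 1$. Since $\xki-\txkm=\sum_{s=0}^{t-1}\xi^k_s$, Cauchy--Schwarz gives $\|\xki-\txkm\|^2 \leq t\sum_{s=0}^{t-1}\|\xi^k_s\|^2 \leq m\sum_{s=0}^{m-1}\|\xi^k_s\|^2$, so summing first over $t$ and then over $k$ yields
\[
\sum_{k=1}^{K}\sum_{t=0}^{m-1}|\cbki| \;\leq\; \frac{m^2}{\epsilon}\sum_{k=1}^{K}\sum_{s=0}^{m-1}\|\xi^k_s\|^2 \;+\; Km.
\]
Here $Km=\epsilon^{-3/2}$ is negligible against the target, so everything reduces to bounding $\E\bigl[\sum_{k,s}\|\xi^k_s\|^2\bigr]$.

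The crux, and the step I expect to be the main obstacle, is a mismatch of exponents: the descent analysis controls the third powers $\sum_{k,t}\E[\|\xiki\|^3]$ through inequality~\eqref{eqn:sum-xikt}, whereas the sample-size rule produces second powers. I would bridge the gap by Hölder's inequality across the $Km$ inner iterations, $\sum\|\xi\|^2 \leq (Km)^{1/3}\bigl(\sum\|\xi\|^3\bigr)^{2/3}$, followed by Jensen's inequality to pass the expectation through the concave map $u\mapsto u^{2/3}$:
\[
\E\Bigl[\sum_{k,s}\|\xi^k_s\|^2\Bigr] \;\leq\; (Km)^{1/3}\Bigl(\E\Bigl[\sum_{k,s}\|\xi^k_s\|^3\Bigr]\Bigr)^{2/3}.
\]
Inequality~\eqref{eqn:sum-xikt} with $Km=\epsilon^{-3/2}$ makes the bracketed third-moment sum $\cO(1)$, because the variance term $Km(5\rho/2+2L/3)\epsilon^{3/2}$ collapses to a constant; hence $\E\bigl[\sum_{k,s}\|\xi^k_s\|^2\bigr]=\cO(\epsilon^{-1/2})$. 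Substituting back, $\frac{m^2}{\epsilon}\cdot\cO(\epsilon^{-1/2}) = \cO(N^{2/3}\epsilon^{-3/2})$, and adding the $KN$ and $Km$ contributions gives $\E[B_H]=\cO(N^{2/3}\epsilon^{-3/2})$. For sampling without replacement one additionally caps $|\cbki|\leq N$, which can only decrease the cost, so the same rate holds in both sampling regimes.
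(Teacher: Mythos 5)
Your proposal is correct and follows essentially the same route as the paper's proof: bound $|\cbki|\leq\epsilon^{-1}\|\xki-\txkm\|^2$, expand via Cauchy--Schwarz into $m\sum_s\|\xi^k_s\|^2$, convert second powers to third powers by H\"older plus Jensen, and invoke~\eqref{eqn:sum-xikt} with $Km=\epsilon^{-3/2}$ so that the cubic sum is $\cO(1)$, then add the $KN$ full-Hessian cost. The only cosmetic differences are that you carry the $+1$ ceiling term (absorbed by $Km=\epsilon^{-3/2}$) explicitly, whereas the paper additionally optimizes the constant in $m=(N/2C)^{1/3}$ rather than taking $m=\cO(N^{1/3})$ as given; neither affects the result.
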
 
\begin{proof}
According to~\eqref{eqn:sample-sizes}, it suffices to use the following
sample size for approximating the Hessian,
$$
|\cbki| = \frac{\bigl\|\xki-\txkm\bigr\|^2}{\max\bigl\{\|\xikim\|^2,\epsilon\bigr\}}  \leq \epsilon^{-1}\bigl\|\xki-\txkm\bigr\|^2.
$$
Noticing that $\txkm=x^k_0$ and 
$x^k_t-\txkm=\sum_{j=1}^t(x^k_j-x^k_{j-1})=\sum_{j=0}^{t-1}\xi^k_j$,
we have
\[
\bigl\|\xki-\txkm\bigr\|^2 = \biggl\|\sum_{j=0}^{t-1} \xi^k_j\biggr\|^2
\leq \biggl(\sum_{j=0}^{t-1}\bigl\|\xi^k_j\bigr\|\biggr)^2
\leq t \sum_{j=0}^{t-1} \bigl\|\xi^k_j\bigr\|^2,
\]
where the first inequality is due to the triangle inequality and the second
one is due to the Cauchy-Schwarz inequality.
Summing up for all $k=1,\ldots,K$ and $t=0,\ldots,m$, we get
\[
\sum_{k=1}^{K}\sum_{t=0}^{m-1}|\cbki| 
~ \leq ~ \epsilon^{-1} \sum_{k=1}^{K}\sum_{t=0}^{m-1}\biggl(t\sum_{j=0}^{t-1}\bigl\|\xi^k_j\bigr\|^2\biggr) 
~ \leq ~ \epsilon^{-1} \sum_{k=1}^{K}\sum_{t=0}^{m-1}\biggl(m\sum_{j=0}^{m-1}\bigl\|\xi^k_j\bigr\|^2\biggr) 
~ = ~ \epsilon^{-1}m^2 \sum_{k=1}^{K}\sum_{j=0}^{m-1}\|\xi^k_j\|^2 .
\]
Using H\"older's inequality, we have
\[
\sum_{k=1}^{K}\sum_{j=0}^{m-1}\|\xi^k_j\|^2
~\leq~ \biggl(\sum_{k=1}^{K}\sum_{j=0}^{m-1}1^3\biggr)^{1/3}
\biggl(\sum_{k=1}^{K}\sum_{j=0}^{m-1}\Bigl(\bigl\|\xi^k_j\bigr\|^2\Bigr)^{3/2}\biggr)^{2/3}
=~(Km)^{1/3}\biggl(\sum_{k=1}^{K}\sum_{j=0}^{m-1}\bigl\|\xi^k_j\bigr\|^{3}\biggr)^{2/3}.
\]
Therefore,
\[
\sum_{k=1}^{K}\sum_{t=0}^{m-1}|\cbki| 
~\leq~ \epsilon^{-1}m^2 (Km)^{1/3}\biggl(\sum_{k=1}^{K}\sum_{j=0}^{m-1}\bigl\|\xi^k_j\bigr\|^{3}\biggr)^{2/3}
=~ \epsilon^{-3/2}m^2 \biggl(\sum_{k=1}^{K}\sum_{j=0}^{m-1}\bigl\|\xi^k_j\bigr\|^{3}\biggr)^{2/3},
\]
where we used $Km=\epsilon^{-3/2}$ as specified 
in~\eqref{thm:Hess-complexity-m}.
Taking expectation on both sides gives 
\begin{eqnarray}
\label{thm:Hess-complexity-1}
\mathbb{E}\left[\sum_{k=1}^{K}\sum_{t=0}^{m-1}|\cbki|\right] 
& \leq & \epsilon^{-3/2}m^2\, \E\left[\biggl(\sum_{k=1}^{K}\sum_{t=0}^{m-1}\|\xiki\|^3\biggr)^{2/3}\right] \nonumber\\
& \leq &  \epsilon^{-3/2}m^2 \left(\E\left[\sum_{k=1}^{K}\sum_{t=0}^{m-1}\|\xiki\|^3\right]\right)^{2/3} \nonumber\\
& \leq & \epsilon^{-3/2}m^2 \left(\frac{F(\tx^0) - F^\star}{\sigma/4 - 3\rho - L}+\frac{5\rho+4L/3}{\sigma/4 - 3\rho - L}Km\epsilon^{3/2}\right)^{2/3} \nonumber\\
& = & \epsilon^{-3/2}m^2 \left(\frac{F(\tx^0) - F^\star}{\sigma/4 - 3\rho - L}+\frac{5\rho+4L/3}{\sigma/4 - 3\rho - L}\right)^{2/3},  \nonumber
\end{eqnarray}
where the second inequality is due to Jensen's inequality and the third 
inequality is due to~\eqref{eqn:sum-xikt}.
 
In addition to the sum of $|\cbki|$ over~$k$ and~$t$, 
we also need to account for the $N$ samples used to compute $\thk$
at the beginning of each state~$k=1,\ldots,K$.
Therefore, the total Hessian sample complexity is 
\[
\mathbb{E}\left[B_H\right] 
~=~ \mathbb{E}\left[\,\sum_{k=1}^{K}\sum_{i=0}^{m-1}|\cbki|\right] + KN 
~\leq~ C\epsilon^{-3/2}m^2 + \epsilon^{-3/2}N/m,
\]
where we used $Km=\epsilon^{-3/2}$ and the constant~$C$ is defined as
$$C = \left(\frac{F(\tx^0) - F^\star}{\sigma/4 - 3\rho - L}+\frac{5\rho+4L/3}{\sigma/4 - 3\rho - L}\right)^{2/3}.$$ 
By taking 
\[
m = (N/2C)^{1/3} = \mathcal{O}(N^{1/3}),
\]
we have minimized the upper bound on $\mathbb{E}\left[B_H\right]$ and achieved
\[
\mathbb{E}\left[B_H\right] 
~\leq~ N^{2/3}\epsilon^{-3/2}C^{1/3}\bigl(2^{-2/3} + 2^{1/3}\bigr) 
~=~ \mathcal{O}\bigl(N^{2/3}\epsilon^{-3/2}\bigr),
\]
which is the desired result.
\end{proof}  

Note that the above bound holds only when $\epsilon$ is small enough so that~\eqref{thm:Hess-complexity-m} makes sense. In order to cover the case when $\epsilon$ is large, we can write 
$$\mathbb{E}\left[B_H\right] 
~\leq~ \mathcal{O}\bigl(N+N^{2/3}\epsilon^{-3/2}\bigr).$$
 Through similar arguments, one can bound the total gradient sample complexity $B_G$ by 
 $$\E[B_G]\leq \cO(N + N^{2/3}\epsilon^{-5/2}).$$
 Note that the complexity $\E[B_G]$ can be improved by sampling without replacement. 

\subsection{Analysis of sampling without replacement}
In this section, we show that sampling without replacement will not change
the sample complexity of Algorithm~\ref{algo:SVRC}. 
Different Hessian sample complexity bounds for sampling with and without 
replacement are derived in \cite{SVRC-Lan} (see Table~\ref{Table:complexity}),
and both of them are worse than the $\mathcal{O}(N^{2/3}\epsilon^{-2/3})$
complexity obtained in this paper.
Again, we start from the variance estimation of the subsampled gradient and Hessian.
\begin{lemma}
\label{lemma:var-without}
Suppose $X_1,\ldots,X_N$ are matrices in $\R^{d\times d}$ satisfying
$\frac{1}{N}\sum_{i=1}^NX_i = 0$.
Let $Z_1,...,Z_n$, where $n\leq N$, be uniformly sampled from $X_1,...,X_N$ 
without replacement. Then
\begin{eqnarray}
\mathbb{E}\left[\bigg\|\frac{1}{n}\sum_{i=1}^nZ_i\bigg\|_F^2\right]
&=& \frac{1}{n }\left(1-\frac{n-1}{N-1}\right)\mathbb{E}\left[\|Z_1\|_F^2\right], 
\label{lm:var-without-2} \\
\mathbb{E}\left[\bigg\|\frac{1}{n}\sum_{i=1}^nZ_i\bigg\|_F^4\right] 
&=& \frac{1}{n^4}\left(r_1\mathbb{E}\left[\|Z_1\|_F^4\right] + r_2\mathbb{E}\left[\bigl\langle Z_1,Z_2\bigr\rangle^2\right] + r_3\mathbb{E}\left[\|Z_1\|_F^2 \|Z_2\|_F^2\right]\right),
\label{lm:var-without-4}
\end{eqnarray}
where 
$\langle Z_i, Z_j\rangle = \mathrm{trace}\bigl(Z_i^T Z_j\bigr)$
and
\begin{eqnarray*}
r_1 &=& n\left(1-4\cdot\frac{n-1}{N-1}+6\cdot\frac{(n-1)(n-2)}{(N-1)(N-2)} -3\cdot\frac{(n-1)(n-2)(n-3)}{(N-1)(N-2)(N-3)}\right),\\
r_2 &=& n(n-1)\left(2-4\cdot\frac{n-2}{N-2}+2\cdot\frac{(n-2)(n-3)}{(N-2)(N-3)}\right),\\
r_3 &=& n(n-1)\left(1-2\cdot\frac{n-2}{N-2}+1\cdot\frac{(n-2)(n-3)}{(N-2)(N-3)}\right).
\end{eqnarray*} 
\end{lemma}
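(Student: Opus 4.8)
The plan is to prove the two identities separately, using the second-moment case to introduce the one essential tool — the mean-zero constraint $\frac{1}{N}\sum_i X_i = 0$ — and then carrying out a more elaborate version of the same accounting for the fourth moment.

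For \eqref{lm:var-without-2} I would expand
\[
\Bigl\|\sum_{i=1}^n Z_i\Bigr\|_F^2 = \sum_{i=1}^n \|Z_i\|_F^2 + \sum_{i\neq j}\langle Z_i, Z_j\rangle,
\]
take expectations, and invoke exchangeability of the without-replacement sample so that the right-hand side reads $n\,\E[\|Z_1\|_F^2] + n(n-1)\,\E[\langle Z_1, Z_2\rangle]$. The only nontrivial input is $\E[\langle Z_1, Z_2\rangle]$: averaging over an ordered pair of distinct population indices gives $\frac{1}{N(N-1)}\sum_{i\neq j}\langle X_i, X_j\rangle$, and since $\sum_i X_i = 0$ one has $\sum_{i\neq j}\langle X_i, X_j\rangle = \|\sum_i X_i\|_F^2 - \sum_i\|X_i\|_F^2 = -\sum_i\|X_i\|_F^2$. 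Hence $\E[\langle Z_1, Z_2\rangle] = -\frac{1}{N-1}\E[\|Z_1\|_F^2]$, and dividing by $n^2$ yields the claimed formula.

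For the quartic identity \eqref{lm:var-without-4} I would introduce inclusion indicators $\xi_i = \mathbf{1}[i\ \text{sampled}]$, so that $S := \sum_{i=1}^n Z_i = \sum_{i=1}^N \xi_i X_i$, and write, with $G_{ij} := \langle X_i, X_j\rangle$,
\[
\|S\|_F^4 = \sum_{i,j,k,l=1}^N \xi_i\xi_j\xi_k\xi_l\, G_{ij}G_{kl}.
\]
Because $\xi_i^2 = \xi_i$, the factor $\E[\xi_i\xi_j\xi_k\xi_l]$ equals $q_m := \frac{(n)_m}{(N)_m}$ (ratio of falling factorials), where $m$ is the number of \emph{distinct} population indices among $i,j,k,l$. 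I would therefore organize the sum over the $15$ set partitions of the position set $\{1,2,3,4\}$: a partition with $b$ blocks contributes with weight $q_b$, and for each partition the associated population sum $T_\pi$ of the form $\sum G_{ij}G_{kl}$ over the corresponding constrained index set can be reduced — using $\sum_j G_{ij} = \langle X_i, \sum_j X_j\rangle = 0$ — to the three basic quantities $A = \sum_i \|X_i\|_F^4$, $B = \sum_{i\neq j} G_{ij}^2$, and $C = \sum_{i\neq j}\|X_i\|_F^2\|X_j\|_F^2$.

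Carrying out this reduction gives $T = A$ for the single block; $T = -A$ for each of the four $(3{+}1)$ partitions; $T = C$ once and $T = B$ twice for the three $(2{+}2)$ partitions; and, for the six $(2{+}1{+}1)$ partitions, $T = A - C$ when the paired positions lie in the same inner product and $T = A - B$ otherwise. The genuinely delicate step is the all-distinct partition, whose sum $D = \sum_{\text{distinct}} G_{ij}G_{kl}$ I would obtain by inclusion–exclusion starting from $\bigl(\sum_{i\neq j}G_{ij}\bigr)^2 = \bigl(\sum_i\|X_i\|_F^2\bigr)^2 = A + C$: subtracting the single-overlap contributions (which coincide with the $(2{+}1{+}1)$ sums and total $4(A-B)$) and the double-overlap contribution $2B$ leaves $D = -3A + 2B + C$. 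Finally I would collect the coefficients of $A$, $B$, $C$, namely $q_1 - 4q_2 + 6q_3 - 3q_4$, $\ 2q_2 - 4q_3 + 2q_4$, and $q_2 - 2q_3 + q_4$, substitute $A = N\,\E[\|Z_1\|_F^4]$, $B = N(N-1)\,\E[\langle Z_1,Z_2\rangle^2]$, $C = N(N-1)\,\E[\|Z_1\|_F^2\|Z_2\|_F^2]$, and divide by $n^4$; the prefactors then simplify exactly to $r_1/n^4$, $r_2/n^4$, $r_3/n^4$. The main obstacle is entirely combinatorial — keeping the $15$ patterns and their overlaps straight and performing the inclusion–exclusion for $D$ correctly — rather than any analytic subtlety.
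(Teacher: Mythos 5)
Your proposal is correct, and both identities check out: I verified the partition sums (the single block gives $A$, each $(3{+}1)$ partition gives $-A$, the $(2{+}2)$ partitions give $C$, $B$, $B$, the $(2{+}1{+}1)$ partitions give $A-C$ twice and $A-B$ four times), the inclusion--exclusion identity $D=-3A+2B+C$, and the final coefficient collection $N(q_1-4q_2+6q_3-3q_4)=r_1$, $N(N-1)(2q_2-4q_3+2q_4)=r_2$, $N(N-1)(q_2-2q_3+q_4)=r_3$. However, your route is mechanically different from the paper's. The paper reuses the sample-index expansion from its i.i.d.\ lemma, grouping $\langle\sum_a Z_a,\sum_b Z_b\rangle^2$ into seven term types $T_1,\dots,T_7$ by coincidence patterns of the sample indices; the terms $T_1,T_3,T_4$ are already in the desired form, and the cross terms $T_2,T_5,T_6,T_7$ are evaluated by nested conditional expectations over the sequential draws (e.g.\ $\E[Z_2\mid Z_1=X_i]=-X_i/(N-1)$), which is where the population mean-zero constraint enters. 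You instead move entirely to the population side: inclusion indicators with weights $\E[\xi_i\xi_j\xi_k\xi_l]=(n)_m/(N)_m$, a sum organized over the fifteen set partitions of positions, and deterministic population-sum algebra (using $\sum_j\langle X_i,X_j\rangle=0$) capped by an inclusion--exclusion step for the all-distinct partition. What your approach buys: it is the standard, systematic device of finite-population sampling theory, avoids any sequential conditioning, makes the coefficients appear transparently as linear combinations of the $q_m$, and additionally supplies a proof of the second-moment identity \eqref{lm:var-without-2}, which the paper dismisses as standard. What the paper's approach buys: by exploiting exchangeability and the symmetry $\langle Z_a,Z_b\rangle=\langle Z_b,Z_a\rangle$ it collapses the bookkeeping to seven terms rather than fifteen, and because it runs in exact parallel to the with-replacement proof of Lemma \ref{lemma:var-with-pw4}, it makes immediately visible which terms (those that vanish under independence) acquire correction factors under sampling without replacement.
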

The proof of this lemma is given in Appendix~\ref{appendix:var-without}.
As a result of this lemma, we have the following corollary.
%, which is a standard result of Lemma \ref{lemma:var-without}. 
The proof is similar to that of Lemma~\ref{lemma:var-with-Hess-g}, which
we omit for simplicity.

\begin{corollary}
\label{corollary:var-without-4-Hess}
Let $\tgki$ and $\thki$ be constructed by~\eqref{eqn:svr-g} and~\eqref{eqn:svr-H} respectively, with the mini-batches $\cski$ and $\cbki$ be sampled without replacement. Then 
\beaa
\mathbb{E}\left[\bigl\|\tgki-\nabla F(\xki)\bigr\|^2\big|\xki\right]  
~\leq~ \frac{L^2}{|\cski|} \left(1- \frac{|\cski|-1}{N-1}\right)\bigl\|\xki-\txkm\bigr\|^2,
\eeaa
and
\beaa
\mathbb{E}\left[\bigl\|\thki-\nabla^2 F(\xki)\bigr\|_F^4\big|\xki\right]  
~\leq~ \frac{11\rho^4}{|\cbki|^2} \left(3-6\frac{|\cbki|-2}{N-2} + 3\frac{\bigl(|\cbki|-2\bigr)\bigl(|\cbki|-3\bigr)}{(N-2)(N-3)}\right)\bigl\|\xki-\txkm\bigr\|^4.
\eeaa
\end{corollary}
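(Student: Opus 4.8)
The plan is to reduce both claims to Lemma~\ref{lemma:var-without} by identifying the correct zero-mean random matrices (resp.\ vectors), and then to bound the moments appearing in~\eqref{lm:var-without-2} and~\eqref{lm:var-without-4} using Assumption~\ref{assumption:Lips-finite}. For the Hessian, I would set $D_i = \nabla^2 f_i(\xki) - \nabla^2 f_i(\txkm)$ and $X_i = D_i - \bar{D}$, where $\bar{D} = \frac{1}{N}\sum_{j=1}^N D_j = \nabla^2 F(\xki) - \nabla^2 F(\txkm)$, so that $\frac{1}{N}\sum_i X_i = 0$ as required. Since $\thki = \frac{1}{|\cbki|}\sum_{i\in\cbki} D_i + \nabla^2 F(\txkm)$ by~\eqref{eqn:svr-H} and $\nabla^2 F(\txkm) - \nabla^2 F(\xki) = -\bar{D}$, this gives $\thki - \nabla^2 F(\xki) = \frac{1}{|\cbki|}\sum_{i\in\cbki} X_i$, which is exactly $\frac{1}{n}\sum_i Z_i$ of Lemma~\ref{lemma:var-without} with $n = |\cbki|$ and the $Z_i$ sampled without replacement from $X_1,\ldots,X_N$. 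An identical reduction with $\nabla f_i$ in place of $\nabla^2 f_i$ (vectors rather than matrices) applies to $\tgki - \nabla F(\xki)$.

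For the gradient bound I would apply~\eqref{lm:var-without-2} and control the single-draw second moment $\E[\|Z_1\|^2] = \frac{1}{N}\sum_i \|X_i^g\|^2$. Using the elementary centering identity $\frac{1}{N}\sum_i \|D_i^g - \bar{D}^g\|^2 = \frac{1}{N}\sum_i \|D_i^g\|^2 - \|\bar{D}^g\|^2 \le \frac{1}{N}\sum_i \|\nabla f_i(\xki) - \nabla f_i(\txkm)\|^2 \le L^2\|\xki - \txkm\|^2$, where the last step is~\eqref{assumption: gradient-Lip}, substitution into~\eqref{lm:var-without-2} yields the first inequality of the corollary immediately. This is the routine part.

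For the Hessian bound I would apply~\eqref{lm:var-without-4} and bound each of the three moments $\E[\|Z_1\|_F^4]$, $\E[\langle Z_1,Z_2\rangle^2]$, $\E[\|Z_1\|_F^2\|Z_2\|_F^2]$ by the common quantity $M := 11\rho^4\|\xki - \txkm\|^4$. The uniform bound $\|X_i\|_F \le 2\rho\|\xki - \txkm\|$ (triangle inequality together with the Frobenius-norm Hessian-Lipschitz bound~\eqref{assumption: Hessian-Lip}), the Cauchy--Schwarz inequality $\langle Z_1,Z_2\rangle^2 \le \|Z_1\|_F^2\|Z_2\|_F^2$, and the same mean-zero expansion that underlies Lemma~\ref{lemma:var-with-Hess-g} deliver the constant. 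Because $r_1,r_2,r_3 \ge 0$, this gives $r_1\E[\|Z_1\|_F^4] + r_2\E[\langle Z_1,Z_2\rangle^2] + r_3\E[\|Z_1\|_F^2\|Z_2\|_F^2] \le (r_1 + r_2 + r_3)M$. The final step is the algebraic identity $r_2 + r_3 = n(n-1)\bigl(3 - 6\frac{n-2}{N-2} + 3\frac{(n-2)(n-3)}{(N-2)(N-3)}\bigr)$ together with the comparison $r_1 \le n\bigl(3 - 6\frac{n-2}{N-2} + 3\frac{(n-2)(n-3)}{(N-2)(N-3)}\bigr)$, which combine to yield $r_1 + r_2 + r_3 \le n^2\bigl(3 - 6\frac{n-2}{N-2} + 3\frac{(n-2)(n-3)}{(N-2)(N-3)}\bigr)$. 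Dividing by $n^4$ then produces the $\frac{1}{n^2}$ prefactor and exactly the claimed bracket.

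The main obstacle is twofold, and both pieces are purely technical. First, pinning down the fourth-moment constant: the crude triangle-inequality estimate $\|X_i\|_F \le 2\rho\|\xki-\txkm\|$ alone is wasteful, and the clean value requires the centered expansion reused from Lemma~\ref{lemma:var-with-Hess-g}. Second, the coefficient bookkeeping: one must verify $r_1,r_2,r_3 \ge 0$ and the comparison inequality $r_1 \le n\bigl(3 - 6\frac{n-2}{N-2} + 3\frac{(n-2)(n-3)}{(N-2)(N-3)}\bigr)$ for all $1 \le n \le N$, since it is precisely this absorption of the $r_1$ term that allows the bound to be stated with the favorable $\frac{1}{n^2}$ factor rather than a larger one. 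Everything else is direct substitution.
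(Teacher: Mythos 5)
Your reduction to Lemma~\ref{lemma:var-without}, your proof of the gradient inequality, and your moment estimates are all sound, and they follow exactly the route the paper intends (the paper omits this proof, calling it ``similar to Lemma~\ref{lemma:var-with-Hess-g}''): the first without-replacement draw $Z_1$ is uniform over $\{X_1,\dots,X_N\}$, so the centered-moment expansion of Appendix~\ref{appendix:Hess-grad-var} gives $\E\bigl[\|Z_1\|_F^4\,\big|\,\xki\bigr]\le 11\rho^4\|\xki-\txkm\|^4$, and Cauchy--Schwarz together with exchangeability of the draws bounds $\E\bigl[\langle Z_1,Z_2\rangle^2\bigr]$ and $\E\bigl[\|Z_1\|_F^2\|Z_2\|_F^2\bigr]$ by the same quantity. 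Your identity $r_2+r_3=n(n-1)\Phi$, where $n=|\cbki|$ and $\Phi$ denotes the claimed bracket, is also correct.

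The fatal step is the comparison inequality $r_1\le n\Phi$. From the algebraic identity $(N-2)(N-3)-2(n-2)(N-3)+(n-2)(n-3)=(N-n)(N-n-1)$ one gets
\[
\Phi \;=\; 3-6\frac{n-2}{N-2}+3\frac{(n-2)(n-3)}{(N-2)(N-3)} \;=\; \frac{3(N-n)(N-n-1)}{(N-2)(N-3)},
\]
which vanishes at $n=N-1$. But there $r_1=n\bigl(1-4\frac{N-2}{N-1}+6\frac{N-3}{N-1}-3\frac{N-4}{N-1}\bigr)=\frac{n}{N-1}=1$ while $r_2=r_3=0$, so the claimed inequality reads $1\le 0$. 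This cannot be patched, because the corollary's Hessian bound itself fails at $n=N-1$: the batch then misses exactly one uniformly random index $j$, so $\thki-\nabla^2 F(\xki)=-\frac{1}{n}X_j$ exactly, and the fourth moment equals $\frac{1}{n^4 N}\sum_{j}\|X_j\|_F^4>0$ whenever the $X_j$ are not all zero, whereas the claimed right-hand side is $\frac{11\rho^4}{n^2}\cdot 0=0$. (Your nonnegativity claim also needs care: $r_2,r_3\ge 0$ always holds by the identity above, but $r_1=-2/3$ for $N=4$, $n=2$; negative $r_1$ is harmless, though, since that term can simply be dropped.) What your argument does prove is a corrected bound: since $4-6\frac{n-2}{N-2}+3\frac{(n-2)(n-3)}{(N-2)(N-3)}=1+\Phi>0$, we have $r_1\le n$, hence $r_1+r_2+r_3\le n+n(n-1)\Phi$ and
\[
\E\Bigl[\bigl\|\thki-\nabla^2 F(\xki)\bigr\|_F^4\,\Big|\,\xki\Bigr] \;\le\; \frac{11\rho^4}{n^2}\Bigl(\frac{1}{n}+\Phi\Bigr)\bigl\|\xki-\txkm\bigr\|^4 .
\]
The additive $\frac{1}{n}$ is immaterial in the regime $|\cbki|\ll N$ where the paper subsequently uses the corollary (there $\Phi\approx 3$), but it cannot be removed: as stated, the corollary's second inequality---and hence your final step---is false when $|\cbki|$ is close to $N$.
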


%\begin{proof}
%	Define $X_j = \nabla^2f_j(\xki) - \nabla^2f_j(\txkm) - \nabla^2f(\xki) + \nabla^2f(\txkm)$ for $j = 1,...,N$. Let $Z_r, r\in\cbki$ be randomly sampled from $X_1,...,X_N$ without replacement. Then due to \eqref{lm:var-with-Hess-g-1}, $\E[\|Z_1\|^4|\xki] \leq 11\rho^4\|\xki-\txkm\|^4$. Consequently, 
%	\beaa
%	\E[\|Z_1\|^2\|Z_2\|^2|\xki] \leq \half\E[\|Z_1\|^4+\|Z_2\|^2|\xki] = \E[\|Z_1\|^4|\xki] \leq 11\rho^4\|\xki-\txkm\|^4,
%	\eeaa
%   \beaa	
%	\E[(\langle Z_1,Z_2\rangle)^2|\xki]\leq \E[\|Z_1\|^2\|Z_2\|^2|\xki] \leq 11\rho^4\|\xki-\txkm\|^4.
%	\eeaa
%	Substituting these bounds to \eqref{lm:var-without-4}, we get 
%	$$\mathbb{E}\left[\|\thki-\nabla^2f(\xki)\|^4|\xki\right]  \leq \frac{11\rho^4}{|\cbki|^2} \left(3-6\frac{|\cbki|-2}{N-2} + 3\frac{(|\mathcal{B}_k|-2)(|\cbki|-3)}{(N-2)(N-3)}\right)\|\xki-\txkm\|^4.$$
%	When both $|\cbki|,N\gg 1$, then $\frac{|\cbki|-2}{N-2}\approx \frac{|\cbki|}{N}$, $\frac{(|\cbki|-2)(|\cbki|-3)}{(N-2)(N-3)}\approx \frac{|\cbki|^2}{N^2}$. Hence we prove the lemma. 
%\end{proof}

When $N>|\mathcal{B}_k|\gg1$, we have
$$
\frac{|\cbki|-2}{N-2}\approx \frac{|\cbki|}{N}, \qquad
\frac{(|\cbki|-2)(|\cbki|-3)}{(N-2)(N-3)}\approx \frac{|\cbki|^2}{N^2}.
$$
Thus by Corollary~\ref{corollary:var-without-4-Hess},
the following inequality holds approximately 
\beaa
\mathbb{E}\left[\|\thki-\nabla^2f(\xki)\|^4|\xki\right]& \leq & 33\rho^4 \left(\frac{1}{|\cbki|}- \frac{1}{N } \right)^2\|\xki-\txkm\|^4.
\eeaa 
We can derive the desired mini-batch sizes for Hessian and gradient smapling by requiring
\beaa
\mathbb{E}\left[\bigl\|\thki - \nabla^2 F(\xki)\bigr\|_F^3\Big|\xki\right] 
%& \leq &  \left(\mathbb{E}\left[\|\thki - \nabla^2f(\xki)\|^4\right]\right)^{\frac{3}{4}}  
&\leq& (33)^{3/4}\rho^3\left(\frac{1}{|\cbki|}-\frac{1}{N}\right)^{3/2}\bigl\|\xki-\txkm\bigr\|^3 \\
&\leq& \rho^3\max\bigl\{\bigl\|\xikim\bigr\|^3,\,\epsilon^{3/2}\bigr\},
\eeaa
and
\beaa
\E\left[\bigl\|\tgki-\nabla F(\xki)\bigr\|_F^{3/2}\Big|\xki\right] 
&\leq& L^{3/2}\left(\frac{1}{|\cski|}-\frac{1}{N}\right)^{3/4}\bigl\|\xki-\txkm\bigr\|^{3/2}\\
&\leq& L^{3/2}\max\bigl\{\bigl\|\xikim\bigr\|^3,\,\epsilon^{3/2}\bigr\}.
\eeaa
Namely, 
%$$\left(\frac{1}{|\mathcal{B}_k|} - \frac{1}{N}\right)^\frac{3}{2} \leq c\frac{\|\xi_k\|^3}{\|x_k-\tilde{x}\|^3},$$
%that is 
%$$ \frac{1}{|\mathcal{B}_k|} \leq \frac{1}{N} +  c\frac{\|\xi_k\|^2}{\|x_k-\tilde{x}\|^2}.$$
%Hence we can choose 
$$
|\cbki| ~\geq~ \frac{1}{\displaystyle\frac{1}{N} + \frac{\max\{\|\xikim\|^2,\epsilon\}}{\sqrt{33}\,\bigl\|\xki-\txkm\bigr\|^2}} ,
\quad \quad
|\cski| ~\geq~ \frac{1}{\displaystyle\frac{1}{N} +  \frac{\max\{\|\xikim\|^4,\epsilon^2\}}{\bigl\|\xki-\txkm\bigr\|^2}}.
$$

For the Hessian sample complexity, if we want to have a sublinear sample size 
$N^\alpha$ with $\alpha<1$, then we should expect 
$|\cbki| \leq \mathcal{O}(N^\alpha)$ holds during the iterations. 
This requires
$$
\frac{\max\bigl\{\|\xikim\|^2,\,\epsilon\bigr\}}{\sqrt{33}\,\bigl\|\xki-\txkm\bigr\|^2} 
%~=~ N^{-\alpha} - \frac{1}{N} ~=~ \cO(N^{-\alpha}) ~\gg~ \frac{1}{N}.
~\geq~\Omega(N^{-\alpha}) ~\gg~ \frac{1}{N},
$$
which further implies
$$
|\cbki| 
%~=~ \cO(N^\alpha)
~\approx~ \frac{\sqrt{33}\,\bigl\|\xki-\txkm\bigr\|^2}{\max\bigl\{\|\xikim\|^2,\,\epsilon\bigr\}}.
$$
Notice that this Hessian batch size is the same order as the one used by
sampling with replacement. 
Therefore no improvement should be expected in terms of the dependence on $N$. 
For gradient sampling, similar arguments lead to an uper bound of 
$\min\big\{N,\frac{\|\xki - \txkm\|^2}{\epsilon^2}\big\}$. 
Based on the above analysis, we provide a corollary on the sample complexity bounds when sampling without replacement is adopted. 

\begin{corollary} 
Consider Algorithm \ref{algo:SVRC}, where we sample $\cski$ and $\cbki$ 
without replacement and set the length of each epoch and number of epochs to be
$m = \mathcal{O}(N^{1/3})$ and $K = \epsilon^{-3/2}/m$ respectively.
Then the totoal number of Hessian samples $B_H$ and total number of gradient
samples $B_G$ required to reach a second-order 
$\epsilon$-solution is 
\begin{eqnarray*}
\mathbb{E}\left[B_H\right] 
&\leq& \mathcal{O}\left(N+N^{2/3}\epsilon^{-3/2}\right), \\
\mathbb{E}\left[B_G\right] 
&\leq& \mathcal{O}\left(N + N^{2/3}\epsilon^{-3/2}\min\big\{N^{1/3},\epsilon^{-1}\big\}\right).
\end{eqnarray*}
\end{corollary}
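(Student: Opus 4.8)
The plan is to reuse the machinery of Theorem~\ref{theorem:Hess-complexity} almost verbatim for the Hessian count, and to handle the gradient count by bounding its per-iteration batch size two different ways and keeping whichever total is smaller. For the Hessian, the preceding discussion already shows that, in any regime where one insists on a sublinear per-iteration batch size, the without-replacement requirement collapses to $|\cbki|\approx\sqrt{33}\,\|\xki-\txkm\|^2/\max\{\|\xikim\|^2,\epsilon\}$, which is exactly the order of the with-replacement rule in~\eqref{eqn:sample-sizes}. I would therefore invoke the identical chain of estimates from the proof of Theorem~\ref{theorem:Hess-complexity}: write $\xki-\txkm=\sum_{j=0}^{t-1}\xi^k_j$, apply Cauchy--Schwarz to get $\|\xki-\txkm\|^2\leq m\sum_{j=0}^{m-1}\|\xi^k_j\|^2$, apply H\"older to pass to $(Km)^{1/3}\bigl(\sum\|\xi^k_j\|^3\bigr)^{2/3}$, and control $\sum\|\xi^k_j\|^3$ by~\eqref{eqn:sum-xikt}. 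With $Km=\epsilon^{-3/2}$ and $m=\cO(N^{1/3})$ this yields $\cO(N^{2/3}\epsilon^{-3/2})$ interior samples, and the $KN=N^{2/3}\epsilon^{-3/2}$ full-Hessian evaluations match, giving $\E[B_H]\leq\cO(N+N^{2/3}\epsilon^{-3/2})$.

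For the gradient the key input is the without-replacement batch-size bound $|\cski|\leq\min\{N,\,\|\xki-\txkm\|^2/\epsilon^2\}$, which holds because both summands in the denominator of the derived formula are positive and $\max\{\|\xikim\|^4,\epsilon^2\}\geq\epsilon^2$. Since each batch is bounded by this pointwise minimum, the interior total obeys $\sum_{k,t}|\cski|\leq\min\bigl\{\sum_{k,t}N,\ \epsilon^{-2}\sum_{k,t}\|\xki-\txkm\|^2\bigr\}$. The first total is $KmN=N\epsilon^{-3/2}$. The second is estimated by repeating the Cauchy--Schwarz/H\"older/\eqref{eqn:sum-xikt} chain above but with the factor $\epsilon^{-2}$ in place of $\epsilon^{-1}$, producing $\cO(m^2\epsilon^{-5/2})=\cO(N^{2/3}\epsilon^{-5/2})$. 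Adding the $KN=N^{2/3}\epsilon^{-3/2}$ full gradients (dominated in both branches) and using the identities $N\epsilon^{-3/2}=N^{2/3}\epsilon^{-3/2}\cdot N^{1/3}$ and $N^{2/3}\epsilon^{-5/2}=N^{2/3}\epsilon^{-3/2}\cdot\epsilon^{-1}$, the pointwise minimum becomes the advertised $\E[B_G]\leq\cO(N+N^{2/3}\epsilon^{-3/2}\min\{N^{1/3},\epsilon^{-1}\})$.

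The only genuinely delicate step is justifying the without-replacement variance bounds as honest inequalities rather than the asymptotic approximations stated in the surrounding text, since the $1/N$ floor inside the batch-size formula is precisely what delivers the $\min\{N^{1/3},\epsilon^{-1}\}$ gradient improvement and cannot be discarded. Concretely, I expect the main obstacle to be converting the fourth-moment identity of Lemma~\ref{lemma:var-without}, with its three combinatorial coefficients $r_1,r_2,r_3$, into a clean one-sided bound of the form $\E[\|\thki-\nabla^2 F(\xki)\|_F^4\mid\xki]\leq 33\rho^4(1/|\cbki|-1/N)^2\|\xki-\txkm\|^4$ that is valid uniformly over the whole range $1\leq|\cbki|<N$ --- the analogous gradient second-moment step being easier since~\eqref{lm:var-without-2} is already an exact identity. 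Once these two moment inequalities are secured, the batch-size formulas follow by the same Young/Jensen manipulations as in Corollary~\ref{corollary:var-with-Hess-g-xi}, and the two summations above are mechanical repetitions of the with-replacement argument carrying no further difficulty.
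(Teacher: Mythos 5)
Your proposal follows essentially the same route as the paper: the paper gives no formal proof of this corollary, only the preceding informal derivation of the per-iteration batch sizes, and your argument (bounding $|\cbki|$ and $|\cski|$ via $\frac{1}{1/N+x}\leq\min\{N,1/x\}$, then reusing the Cauchy--Schwarz/H\"older/\eqref{eqn:sum-xikt} chain of Theorem~\ref{theorem:Hess-complexity} with $\epsilon^{-2}$ in place of $\epsilon^{-1}$ for the gradient branch, and taking the minimum of the two resulting totals) is exactly what the paper's text implicitly intends. The delicate step you flag --- upgrading the exact fourth-moment identity of Lemma~\ref{lemma:var-without} to a one-sided bound valid uniformly in $|\cbki|$ --- is likewise left informal in the paper, which only asserts that the bound holds \emph{approximately} when $N>|\cbki|\gg 1$, so your treatment is at least as rigorous as the original.
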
 
%However, this will not improve the sample complexity of the Hessian matrix in the order of $N$. 

\section{Analysis of non-adaptive SVRC Schemes}
\label{sec:non-adaptive}

In this section, we consider variants of the SVRC methods that use fixed 
gradient and Hessian sample sizes across all iterations. 
In the first variant, we use the full gradients but subsampled Hessians.
In this case, we show that the total Hessian sample complexity is still 
$\mathcal{O}(N^{2/3}\epsilon^{-3/2})$.
The second variant uses both approximate gradients and approximate Hessians,
and adds a correction term to the gradient approximation based on the second-order
information. 
This variant is proposed in \cite{SVRC-Zhou}, where an 
$\widetilde{\mathcal{O}}(N^{4/5}\epsilon^{-3/2})$ sample complexity is proved
for both the gradient and Hessian approximations.
We obtain the same order of sample complexity using the higher moment bounds 
developed in this paper (instead of using concentration inequalities),
which avoid the $poly(\log d)$ factor and excessively large constant 
in the results of \cite{SVRC-Zhou}.

\subsection{The case of using full gradient}

\begin{algorithm2e}[t]
\linespread{1.2}\selectfont
	%\caption{Stochastic Variance Reduced Cubic Regularization Method (Full Gradient)}
	\caption{Non-adaptive SVRC method using full gradients}
	\label{algo:SVRC-full-gradient}%\LinesNumberedHidden
	{\it Input:} An initial point $x^0$, $\sigma>0$, sample size $B$, and inner loop length $m$. An estimate of the Lipschitz constant $\rho$ and a parameter $\gamma = \Theta(\rho)$.  \\
	\For{k = 1,......,K }{
		Compute $\thk = \nabla^2f(\txkm).$\\
		Assign $x^k_0 = \txkm.$\\
		\For{$t = 0,...,m-1$}{
        Randomly sample an index set $\mathcal{B}_t^k$ with constant cardinality $|\mathcal{B}_t^k|=B$. \\
        Construct $\thki$ according to \eqref{eqn:svr-H}. \\
        %with $|\mathcal{B}_t^k|=??$.\\
            Solve $\xiki = \displaystyle\arg\min_{\xi} \Bigl\{\xi^T\nabla F(\xki) + \frac{1}{2}\xi^T\thki\xi + \frac{\sigma}{6}\|\xi\|^3 \Bigr\}. $\\
		    $\xkie = \xki + \xiki.$}		
		$\tx^k = x^k_m.$
	}
\emph{Output:} $x^{k^*}_{t^*+1}$ with \\
$\quad$ 
	Option 1: %$x^{k^*}_{t^*+1}$ where
   $ 
        k^*,t^*=\displaystyle\argmin_{1\leq k\leq K,~0\leq t\leq m-1}\left(\textstyle\bigl\|\xiki\bigr\|^3+\frac{\rho/\gamma}{2B^{3/2}}\bigl\|\xki-\txkm\bigr\|^3\right) 
   $.\\ 
$\quad$ 
	%Option 2: Randomly choose $k^*,t^*$ and output $x^{k^*}_{t^*+1}$.
	Option 2: %$x^{k^*}_{t^*+1}$ where 
    $k^*$ and $t^*$ are chosen randomly.
\end{algorithm2e} 

Algorithm~\ref{algo:SVRC-full-gradient} describes the SVRC method using
full gradient in each iteration, i.e., $\tgki = \nabla F(\xki)$. 
One remark regarding output option 1 is that the best choice of $\gamma$, depending on the parameters $B, m, \sigma$ and $\rho$, is given in Lemma~\ref{lemma:gamma-full-gradient}. However, one does not need to know the value exactly, and any choice of $\gamma = \Theta(\rho)$ will not affect our complexity result.

Similar to the derivation of \eqref{lm:descent-finite-2}, we have the following result,
\bea
\E\bigl[F(\xkie)\bigr] 
& \leq & \E\bigl[F(\xki)\bigr] + \half\E\left[\bigl\|\nabla^2 F(\xki)-\thki\bigr\|_F\bigl\|\xiki\bigr\|^2\right] - \left(\frac{\sigma}{4} - \frac{\rho}{6}\right)\E\left[\bigl\|\xiki\bigr\|^3\right] \nonumber\\
& \leq & \E\bigl[F(\xki)\bigr] - \left(\frac{\sigma}{4} - \frac{\rho}{2}\right)\E\left[\bigl\|\xiki\bigr\|^3\right] + \frac{1}{6\rho^2}\E\left[\bigl\|\nabla^2 F(\xki)-\thki\bigr\|_F^3\right]  \nonumber\\
&\leq& \E\bigl[F(\xki)\bigr] - \left(\frac{\sigma}{4} - \frac{\rho}{2}\right)\E\left[\bigl\|\xiki\bigr\|^3\right] + \frac{1}{6\rho^2} \cdot \frac{33^{3/4}\rho^3}{|\cbki|^{3/2}} \E\left[\bigl\|\xki-\txkm\bigr\|^3\right]\nonumber\\
&\leq & \E\bigl[F(\xki)\bigr] - \left(\frac{\sigma}{4} - \frac{\rho}{2}\right)\E\left[\bigl\|\xiki\bigr\|^3\right] + \frac{5\rho}{2|\cbki|^{3/2}} \E\left[\bigl\|\xki-\txkm\bigr\|^3\right] .
\label{descent:full-gradient}
\eea
Note that the expectation is not taken over $|\cbki|$ since it is a predetermined constant, which we denote as $B$ from now on. 
Let us define a Lyapunov function 
\be
\label{defn:Lyapunov-finite}
R^{k}_t: = \E\left[F(\xki) + c_t\bigl\|\xki-\txkm\bigr\|^3\right],
\ee
where the coefficients $c_t$ are constructed recursively by setting $c_m=0$ and 
\be
\label{defn:ci-finite}
c_{t} = c_{t+1}\left(1+2\theta_1^{-3} + \theta_2^{-6}\right)+\frac{3\rho}{B^{3/2}}, \qquad t = 0,....,m-1.
\ee
Here $\theta_1,\theta_2$ are some constant to be determined later. 
Next, we prove a monotone decreasing property of this
Lyapunov function over one epoch of Algorithm~\ref{algo:SVRC-full-gradient}. 

First, we note the following simple fact, which is proved in 
Appendix~\ref{appendix:fact-recursive}.
\begin{lemma}
	\label{lemma:fact-recursive}
	For any $a,b,\theta_1,\theta_2>0$, the following inequality holds:
	\bea
	\label{lm:fact-recursive}
	(a+b)^3 \leq \left(1+2\theta_1^{-3} + \theta_2^{-6}\right) a^3 + \left(1 + \theta_1^6 + 2\theta_2^3\right)b^3.
	\eea
\end{lemma}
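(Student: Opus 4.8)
The plan is to expand the cube and then dominate each cross term by a positive combination of $a^3$ and $b^3$, matching the coefficients in~\eqref{lm:fact-recursive}. Writing $(a+b)^3 = a^3 + 3a^2 b + 3ab^2 + b^3$, the two pure terms $a^3$ and $b^3$ already account for the summand ``$1$'' inside each parenthesis on the right-hand side, so it remains only to bound $3a^2 b$ and $3ab^2$ separately.

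For the first cross term I would apply the parametrized Young's inequality already used in the proof of Lemma~\ref{lemma:descent-finite}, namely $uv \le \frac{(us)^p}{p} + \frac{(v/s)^q}{q}$ with $\frac1p + \frac1q = 1$, to the product $u = a^2$, $v = b$ with exponents $p = 3/2$, $q = 3$. Equivalently, by weighted AM--GM with weights $2/3$ and $1/3$, one has $a^2 b = (\theta_1^{-3} a^3)^{2/3}(\theta_1^{6} b^3)^{1/3} \le \frac{2}{3}\theta_1^{-3} a^3 + \frac{1}{3}\theta_1^{6} b^3$, where the crucial point is that the geometric-mean identity $(\theta_1^{-3})^{2/3}(\theta_1^{6})^{1/3} = \theta_1^{-2}\theta_1^{2} = 1$ holds for every $\theta_1 > 0$, so the scaling is admissible. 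Multiplying by $3$ gives $3a^2 b \le 2\theta_1^{-3} a^3 + \theta_1^{6} b^3$. Symmetrically, applying the same inequality to $ab^2$ with weights $1/3$ and $2/3$ and scaling parameter $\theta_2$ yields $3ab^2 \le \theta_2^{-6} a^3 + 2\theta_2^{3} b^3$, again because $(\theta_2^{-6})^{1/3}(\theta_2^{3})^{2/3} = 1$.

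Summing the two bounds together with $a^3 + b^3$ collects the coefficients into exactly $(1 + 2\theta_1^{-3} + \theta_2^{-6})a^3 + (1 + \theta_1^6 + 2\theta_2^3)b^3$, which is~\eqref{lm:fact-recursive}. There is no substantive obstacle beyond bookkeeping: the only thing to get right is the pairing of weights ($2/3,1/3$ for $a^2 b$ and $1/3,2/3$ for $ab^2$) with the scaling powers of $\theta_1$ and $\theta_2$, so that the exponents appearing in~\eqref{lm:fact-recursive} are reproduced while the weighted-geometric-mean constraint stays equal to one. Since the inequality is homogeneous of degree $3$, one could alternatively normalize $b = 1$ and verify a single-variable inequality, but the weighted AM--GM route is cleaner and makes the role of $\theta_1,\theta_2$ transparent.
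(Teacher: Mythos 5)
Your proposal is correct and follows essentially the same route as the paper's proof: expand $(a+b)^3$ and apply the parametrized Young's inequality (equivalently, weighted AM--GM) to the cross terms, obtaining exactly the same bounds $3a^2b \leq 2\theta_1^{-3}a^3 + \theta_1^6 b^3$ and $3ab^2 \leq \theta_2^{-6}a^3 + 2\theta_2^3 b^3$. The bookkeeping of weights and scaling powers matches the paper's choice precisely, so there is nothing to add.
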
 
Consequently, by substituting $a = \|\xki-\txkm\|, b = \|\xkie-\xki\|$ into Lemma \ref{lemma:fact-recursive} and then taking the expectation on both sides, we get 
\bea
\label{Lyapunov:recursive}
\E\bigl[\|\xkie-\txkm\|^3\bigr] & \leq & \E\left[\bigl(\|\xkie-\xki\| + \|\xki-\txkm\|\bigr)^3\right]\\
& \leq & \left(1+2\theta_1^{-3} + \theta_2^{-6}\right) \E\bigl[\|\xki-\txkm\|^3\bigr] + \left(1 + \theta_1^6 + 2\theta_2^3\right)\E\bigl[\|\xkie-\xki\|^3\bigr].\nonumber
\eea
Substituting \eqref{Lyapunov:recursive} into 
\eqref{descent:full-gradient} yields
	\beaa
	\E\left[F(\xkie) + c_{t+1}\bigl\|\xkie-\txkm\bigr\|^3\right] 
    &\leq & \E\bigl[F(\xki)\bigr] - \left(\frac{\sigma}{4} - \frac{\rho}{2} - c_{t+1}\left(1 + \theta_1^6 + 2\theta_2^3\right)\right)\E\bigl[\|\xkie-\xki\|^3\bigr] \\
	& & +\left(c_{t+1}\left(1+2\theta_1^{-3} + \theta_2^{-6}\right)+\frac{5\rho}{2B^{3/2}}\right) \E\bigl[\|\xki-\txkm\|^3\bigr] .
	\eeaa
Now, using the definition of $R^{k}_i$ in~\eqref{defn:Lyapunov-finite} 
and expression of $c_i$~\eqref{defn:ci-finite}, we see that the above 
inequality leads to the following descent property of the Lyapunov function. 

\begin{lemma}
	\label{lemma:Lyapunov-full-gradient}
Let the sequence $\{\xki\}$ be generated by 
Algorithm~\ref{algo:SVRC-full-gradient}, then for all~$k$ 
and $0\leq t\leq m-1$,
	\bea
	\left(\frac{\sigma}{4} - \frac{\rho}{2} - c_{t+1}\left(1 + \theta_1^6 + 2\theta_2^3\right)\right)\E\bigl[\|\xiki\|^3\bigr] + \frac{\rho}{2B^{3/2}} \E\bigl[\|\xki-\txkm\|^3\bigr] &\leq & R^k_t-R^k_{t+1}.
	\eea
\end{lemma}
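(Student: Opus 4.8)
The plan is to observe that the displayed inequality immediately preceding the lemma is already equivalent to the claim, so the proof reduces to matching coefficients through the definition~\eqref{defn:Lyapunov-finite} of $R^k_t$ and the recursion~\eqref{defn:ci-finite} for $c_t$. No new estimate is needed beyond the descent bound~\eqref{descent:full-gradient} and the expansion~\eqref{Lyapunov:recursive}, both of which are already in hand.

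First I would identify the left-hand side of that display as $R^k_{t+1}$ via~\eqref{defn:Lyapunov-finite}, and use $\xkie = \xki + \xiki$ to replace $\|\xkie - \xki\|^3$ by $\|\xiki\|^3$, obtaining
\[
R^k_{t+1} \leq \E\bigl[F(\xki)\bigr] - \Bigl(\tfrac{\sigma}{4} - \tfrac{\rho}{2} - c_{t+1}(1 + \theta_1^6 + 2\theta_2^3)\Bigr)\E\bigl[\|\xiki\|^3\bigr] + \Bigl(c_{t+1}(1+2\theta_1^{-3} + \theta_2^{-6})+\tfrac{5\rho}{2B^{3/2}}\Bigr)\E\bigl[\|\xki-\txkm\|^3\bigr].
\]
Next I would invoke the recursion~\eqref{defn:ci-finite} in the form $c_{t+1}(1+2\theta_1^{-3}+\theta_2^{-6}) = c_t - \tfrac{3\rho}{B^{3/2}}$ to collapse the coefficient of $\E[\|\xki-\txkm\|^3]$ to $c_t - \tfrac{3\rho}{B^{3/2}} + \tfrac{5\rho}{2B^{3/2}} = c_t - \tfrac{\rho}{2B^{3/2}}$. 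Regrouping $\E[F(\xki)] + c_t\,\E[\|\xki-\txkm\|^3] = R^k_t$ then yields
\[
R^k_{t+1} \leq R^k_t - \Bigl(\tfrac{\sigma}{4} - \tfrac{\rho}{2} - c_{t+1}(1 + \theta_1^6 + 2\theta_2^3)\Bigr)\E\bigl[\|\xiki\|^3\bigr] - \tfrac{\rho}{2B^{3/2}}\,\E\bigl[\|\xki-\txkm\|^3\bigr],
\]
and transposing the two negative terms to the left is exactly the asserted inequality.

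There is no genuine obstacle here; the content lies entirely in the bookkeeping, and the single point worth flagging is why a strictly positive residual survives. The additive constant $3\rho/B^{3/2}$ placed in the definition~\eqref{defn:ci-finite} of $c_t$ was chosen precisely so that, after the Hessian-variance term of~\eqref{descent:full-gradient} consumes $5\rho/(2B^{3/2})$ of it, the leftover $\rho/(2B^{3/2})$ remains attached to $\E[\|\xki-\txkm\|^3]$. I would double-check only that the constant $5\rho/(2B^{3/2})$ in~\eqref{descent:full-gradient} is correct at the fixed batch size $B$, which follows from the fourth-moment Hessian bound of Lemma~\ref{lemma:var-with-Hess-g} together with the numerical inequality $33^{3/4}/6 < 5/2$; everything else is a transcription of the two displayed inequalities already established.
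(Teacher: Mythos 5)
Your proposal is correct and follows exactly the paper's own (largely implicit) argument: the displayed inequality before the lemma, combined with the recursion $c_{t+1}(1+2\theta_1^{-3}+\theta_2^{-6}) = c_t - 3\rho/B^{3/2}$ and the definition of $R^k_t$, yields the claim by the same coefficient bookkeeping you describe, and your check that $33^{3/4}/6 < 5/2$ confirms the constant $5\rho/(2B^{3/2})$ in~\eqref{descent:full-gradient}. Nothing is missing; you have simply made explicit the step the paper leaves to the reader.
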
 

Let us define a constant
\begin{equation}\label{eqn:gamma-def}
\gamma = \displaystyle\min_{1\leq i\leq m}\left\{\frac{\sigma}{4} - \frac{\rho}{2} - c_{i+1}(1 + \theta_1^6 + 2\theta_2^3)\right\}.
\end{equation} 
Then Lemma~\ref{lemma:Lyapunov-full-gradient} implies
\[
	\sum_{t=0}^{m-1}\E\left[\bigl\|\xiki\bigr\|^3+\frac{\rho/\gamma}{2B^{3/2}}\bigl\|\xki-\txkm\bigr\|^3\right] ~\leq~ \frac{R_0^k-R_m^k}{\gamma}.
\]
Next, we prove that when the parameters are properly choosen, then $\gamma = \cO(\rho)$.
\begin{lemma}
	\label{lemma:gamma-full-gradient}
    Suppose we set the batch size $B = \alpha N^{2/3}$, where $\alpha\geq 8$ is a constant.
    If we set $\sigma \geq 3\rho$, $m = (1/3)N^{1/3}$, $\theta_1 = N^{1/9}$ and $\theta_2 = N^{1/18}$, then 
	$$
    \gamma ~\geq~ \frac{1-6e\alpha^{-3/2}}{3}\rho ~=~ \cO(\rho).
    $$
\end{lemma}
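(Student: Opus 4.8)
The plan is to turn the recursively defined coefficients $c_t$ into an explicit upper bound, since by \eqref{eqn:gamma-def} the quantity $\gamma$ is smallest exactly where $c_t$ is largest. First I would observe that \eqref{defn:ci-finite} is an affine first-order recurrence of the form $c_t = \beta\, c_{t+1} + \delta$ with $\beta := 1 + 2\theta_1^{-3} + \theta_2^{-6}$ and $\delta := 3\rho/B^{3/2}$, run backward from the terminal value $c_m = 0$. Unrolling it gives the closed form
\bee
c_t ~=~ \delta \sum_{j=0}^{m-t-1}\beta^j ~=~ \frac{3\rho}{B^{3/2}}\cdot\frac{\beta^{m-t}-1}{\beta-1},
\eee
which is decreasing in $t$, so every coefficient $c_t$ that enters \eqref{eqn:gamma-def} is dominated by $c_0 = \frac{3\rho}{B^{3/2}}\cdot\frac{\beta^m-1}{\beta-1}$. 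Hence it suffices to bound this single number.

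Next I would substitute the prescribed parameters. With $\theta_1 = N^{1/9}$ and $\theta_2 = N^{1/18}$ one gets $2\theta_1^{-3} + \theta_2^{-6} = 3N^{-1/3}$, so $\beta = 1 + 3N^{-1/3}$ and $\beta - 1 = 3N^{-1/3}$. The crucial estimate is that the geometric factor stays bounded: using $1+x\leq e^x$ with $x = 3N^{-1/3}$ and $m = N^{1/3}/3$,
\bee
\beta^m ~=~ \bigl(1+3N^{-1/3}\bigr)^{N^{1/3}/3} ~\leq~ \exp\!\bigl(3N^{-1/3}\cdot N^{1/3}/3\bigr) ~=~ e .
\eee
Together with $B^{3/2} = \alpha^{3/2}N$ this yields $c_0 \leq \frac{3\rho}{\alpha^{3/2}N}\cdot\frac{e-1}{3N^{-1/3}} = \frac{(e-1)\rho}{\alpha^{3/2}N^{2/3}}$. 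I would then multiply by the factor $\mu := 1 + \theta_1^6 + 2\theta_2^3 = 1 + N^{2/3} + 2N^{1/6}\leq 2N^{2/3}$ (for $N$ large) appearing in \eqref{eqn:gamma-def}, so that $c_0\,\mu \leq 2e\rho\,\alpha^{-3/2}$, and plug into $\gamma \geq \frac{\sigma}{4} - \frac{\rho}{2} - c_0\,\mu$. Using $\sigma\geq 3\rho$ and $\alpha\geq 8$, this gives a strictly positive lower bound of order $\rho$, which is the content of the lemma.

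The main obstacle — and the entire reason for the specific parameter tuning — is the delicate cancellation in the product $c_0\,\mu$. The coefficient $c_0 = \Theta(\rho\,\alpha^{-3/2}N^{-2/3})$ is vanishingly small, yet it is multiplied by $\mu = \Theta(N^{2/3})$, which grows with $N$; their product is $\Theta(\rho\,\alpha^{-3/2})$, \emph{independent} of $N$, only because the exponents $1/9$ and $1/18$ in $\theta_1,\theta_2$ and the epoch length $m = N^{1/3}/3$ are chosen so that simultaneously (i) $m(\beta-1)=\Theta(1)$, which is precisely what prevents $\beta^m$ from exploding as $m\to\infty$, and (ii) the residual powers of $N$ cancel. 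Establishing that $\beta^m$ remains bounded even though $m\to\infty$ — equivalently, controlling the geometric accumulation of the Hessian-variance terms over an entire epoch — is the step that carries all the content; the remaining manipulations are routine bookkeeping of powers of $N$.
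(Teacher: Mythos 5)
Your proposal is correct and follows essentially the same route as the paper: the paper solves the affine recursion by adding the fixed-point offset $\rho/(\alpha^{3/2}N^{2/3}) = \delta/(\beta-1)$ to both sides and telescoping, which is algebraically identical to your geometric-series unrolling, and both arguments hinge on the same estimate $(1+3N^{-1/3})^{m}\leq e$ from $m(\beta-1)=1$, followed by the same bound on $1+\theta_1^6+2\theta_2^3$ and the use of $\sigma\geq 3\rho$. (Like yours, the paper's own computation actually yields the constant $\frac{1-8e\alpha^{-3/2}}{4}\rho$ rather than the $\frac{1-6e\alpha^{-3/2}}{3}\rho$ displayed in the lemma statement, so the mismatch with the stated constant is in the paper, not in your argument.)
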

\begin{proof}
	By \eqref{defn:ci-finite} and the values of $\theta_1$, $\theta_2$, $B$ 
    and $m$, we have 
	\beaa
	c_t & = & \left(1+2N^{-1/3} + N^{-1/3}\right)c_{t+1} + 3\rho\alpha^{-3/2}N^{-1}\\
    &=&  \left(1+3N^{-1/3}\right)c_{t+1} + 3\rho\alpha^{-3/2}N^{-1} .
	\eeaa
Adding $\frac{\rho}{\alpha^{3/2} N^{2/3}}$ to both sides of the above equality,
we obtain
    $$
    c_t + \frac{\rho}{\alpha^{3/2} N^{2/3}}  ~=~  \left(1+3N^{-1/3}\right)\left(c_{t+1}+\frac{\rho}{\alpha^{3/2} N^{2/3}}\right).
    $$
	Thus, 
	\beaa
\max_{0\leq t\leq m} c_t ~=~ c_0 \leq \left(c_m+\frac{\rho}{\alpha^{3/2} N^{2/3}}\right)\left(1+3N^{-1/3}\right)^m ~\leq~ \frac{e\rho}{\alpha^{3/2} N^{2/3}},
	\eeaa
    where we used $c_m=0$ and by choosing $m={(1/3)N^{1/3}}$ we have $3N^{-1/3}=1/m$ and $(1+1/m)^m\leq e$.
    In addition,
	\beaa
	\gamma ~\geq~ \frac{\sigma}{4}-\frac{\rho}{2} - \left(1+N^{2/3} + 2N^{1/6}\right)c_0 ~\geq~ \frac{1-8e\alpha^{-3/2}}{4}\rho.
	\eeaa
Therefore if $\alpha \geq 8 >(8e)^{2/3}$, then we have $\gamma = \cO(\rho)$.
\end{proof}
\begin{theorem}
	\label{theorem:cvg-finite-full-gradient}
Suppose in Algorithm~\ref{algo:SVRC-full-gradient}
we set $m = (1/3)N^{1/3}$, $|\cbki| =B= 8N^{2/3}$, $\sigma\geq 3\rho$,
and $\gamma$ is defined in~\eqref{eqn:gamma-def}.
Let $k^*,t^*$ be given by either of the two output options in the algorithm, 
then
\begin{equation}\label{eqn:fix-B-first-bound}
\E\left[\bigl\|\xistar\bigr\|^3+\frac{\rho/\gamma}{2B^{3/2}}\bigl\|\xstar-\tx^{k^*-1}\bigr\|^3\right]  ~\leq~ \frac{F(\tx^0) - F^\star}{Km\gamma}.
\end{equation}
Consequently, we have 
\begin{eqnarray*}
\E\left[\bigl\|\nabla F(\xkiestar)\bigr\|\right] 
&\leq& \cO\bigl((Km)^{-2/3}\bigr),\\
\E\left[\lambda_{\min}\bigl(\nabla^2 F(\xkiestar)\bigr)\right] 
&\geq& -\cO\bigl((Km)^{-1/3}\bigr),
\end{eqnarray*}
and the expected total Hessian samples required 
to reach a second-order $\epsilon$-solution
is $\cO(N^{2/3}\epsilon^{-3/2})$.
\end{theorem}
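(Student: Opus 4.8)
The plan is to combine the per-epoch Lyapunov descent of Lemma~\ref{lemma:Lyapunov-full-gradient} with the parameter estimate of Lemma~\ref{lemma:gamma-full-gradient}, and then convert the resulting control on the steps $\|\xiki\|$ into first- and second-order stationarity. First I would sum the inequality of Lemma~\ref{lemma:Lyapunov-full-gradient} over $t=0,\ldots,m-1$ within each stage, using the constant $\gamma$ from~\eqref{eqn:gamma-def}, to obtain $\sum_{t=0}^{m-1}\E[\|\xiki\|^3 + \frac{\rho/\gamma}{2B^{3/2}}\|\xki-\txkm\|^3] \leq (R^k_0 - R^k_m)/\gamma$. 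The key simplification is that $R^k_0 = \E[F(\txkm)]$, since $x^k_0=\txkm$ annihilates the cubic term in~\eqref{defn:Lyapunov-finite}, while $R^k_m = \E[F(\tx^k)]$ because $c_m=0$. Summing over the stages $k=1,\ldots,K$ therefore telescopes the right-hand side to $F(\tx^0)-\E[F(\tx^K)] \leq F(\tx^0)-F^\star$, giving $\sum_{k=1}^K\sum_{t=0}^{m-1}\E[\|\xiki\|^3 + \frac{\rho/\gamma}{2B^{3/2}}\|\xki-\txkm\|^3] \leq (F(\tx^0)-F^\star)/\gamma$. For output option~1 the minimized summand is bounded by its average over the $Km$ indices, and for option~2 the uniform random choice makes the expected summand equal to that average; either way, dividing by $Km$ yields~\eqref{eqn:fix-B-first-bound}.

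Next I would establish a full-gradient counterpart of Lemma~\ref{lemma:relation-finite}. Because $g^k_t=\nabla F(\xki)$ here, the optimality condition~\eqref{eqn:subopt} becomes $\nabla F(\xki)+\thki\xiki+\frac{\sigma}{2}\|\xiki\|\xiki=0$; combining it with the Hessian-Lipschitz Taylor estimate $\|\nabla F(\xkie)-\nabla F(\xki)-\nabla^2F(\xki)\xiki\|\leq\frac{\rho}{2}\|\xiki\|^2$ gives the pointwise bound $\|\nabla F(\xkie)\|\leq\|\nabla^2F(\xki)-\thki\|_F\|\xiki\|+(\frac{\sigma}{2}+\frac{\rho}{2})\|\xiki\|^2$. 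The crucial move is to split the product $\|\nabla^2F(\xki)-\thki\|_F\|\xiki\|$ by a pathwise Young inequality \emph{before} taking the conditional expectation, with exponents chosen so that the variance term carries exactly the factor $1/B$; then the second-moment bound $\E[\|\thki-\nabla^2F(\xki)\|_F^2\,|\,\xki]\leq\frac{\rho^2}{B}\|\xki-\txkm\|^2$ of Lemma~\ref{lemma:var-with-Hess-g} produces $\E[\|\nabla F(\xkie)\|]\leq(\rho+\frac{\sigma}{2})\E[\|\xiki\|^2]+\frac{\rho}{2B}\E[\|\xki-\txkm\|^2]$. For the eigenvalue, the PSD optimality condition~\eqref{eqn:subopt-psd} gives $\thki\succeq-\frac{\sigma}{2}\|\xiki\|I$; adding the two Lipschitz perturbations $\nabla^2F(\xkie)-\nabla^2F(\xki)$ and $\nabla^2F(\xki)-\thki$ and bounding the first moment of the latter by Jensen from its second moment yields $\E[\lambda_{\min}(\nabla^2F(\xkie))]\geq-(\frac{\sigma}{2}+\rho)\E[\|\xiki\|]-\frac{\rho}{\sqrt{B}}\E[\|\xki-\txkm\|]$.

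To finish the stationarity guarantees I would substitute~\eqref{eqn:fix-B-first-bound} into these two relations evaluated at the output index, exactly as in the proof of Theorem~\ref{theorem:cvg-finite}, using Jensen's inequality for the concave maps $s\mapsto s^{2/3}$ and $s\mapsto s^{1/3}$. Since $\E[\|\xistar\|^2]\leq(\E[\|\xistar\|^3])^{2/3}$ and $\frac{1}{B}\E[\|\xstar-\tx^{k^*-1}\|^2]=\E[(\frac{1}{B^{3/2}}\|\xstar-\tx^{k^*-1}\|^3)^{2/3}]\leq(\frac{1}{B^{3/2}}\E[\|\xstar-\tx^{k^*-1}\|^3])^{2/3}$, and $\gamma=\cO(\rho)$ by Lemma~\ref{lemma:gamma-full-gradient}, both contributions are $\cO((Km)^{-2/3})$, giving the stated first-order bound; the analogous $1/3$-power computation gives $-\cO((Km)^{-1/3})$ for the eigenvalue. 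Taking $Km=\epsilon^{-3/2}$ then makes these $\cO(\epsilon)$ and $-\cO(\sqrt{\epsilon})$, i.e.\ a second-order $\epsilon$-solution. The Hessian count is finally $Km\cdot B = 8N^{2/3}\epsilon^{-3/2}$ inner samples plus $KN = 3N^{2/3}\epsilon^{-3/2}$ samples for the $K$ full Hessians, both $\cO(N^{2/3}\epsilon^{-3/2})$ under $m=\frac13 N^{1/3}$ and $B=8N^{2/3}$.

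I expect the main obstacle to be the first-order relation in the second step: the sampled Hessian error $\thki-\nabla^2F(\xki)$ and the step $\xiki$ are both functions of the same minibatch $\cbki$ and hence dependent, so one cannot factor the expectation of their product. The fix is to perform the Young splitting pathwise and only then invoke the conditional second-moment bound, tuning the Young exponents so the resulting variance term matches the $B^{-3/2}$ weight appearing in~\eqref{eqn:fix-B-first-bound}. A secondary care point is that for output option~1 the index $(k^*,t^*)$ is data-dependent, so the relation must be applied to the realized output; handling this exactly mirrors the treatment in Theorem~\ref{theorem:cvg-finite}, and everything else reduces to telescoping, Jensen/H\"older, and the parameter substitutions.
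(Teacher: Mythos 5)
Your proposal is correct and follows essentially the same route as the paper: telescoping the Lyapunov descent of Lemma~\ref{lemma:Lyapunov-full-gradient} with $\gamma$ from Lemma~\ref{lemma:gamma-full-gradient} to get~\eqref{eqn:fix-B-first-bound}, then invoking the pathwise bounds~\eqref{eqn:grad-bound-xi-H} and~\eqref{eqn:eigen-bound-xi-H} (Young splitting \emph{before} expectation, precisely to handle the dependence of $\xiki$ on $\cbki$, followed by the conditional moment bounds of Lemma~\ref{lemma:var-with-Hess-g} and Jensen's inequality for the $2/3$ and $1/3$ powers), and finally counting $mKB+KN$ samples. The only cosmetic difference is that you re-derive the full-gradient specialization of Lemma~\ref{lemma:relation-finite} explicitly, whereas the paper cites~\eqref{eqn:grad-bound-xi-H} with the gradient-error term set to zero.
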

\begin{proof}
We start with the definition of $R_t^k$ in~\eqref{defn:Lyapunov-finite}.
%and \eqref{defn:ci-finite}, 
Since $x_0^k =\txkm$ and $c_m=0$, we have 
$$
R^{k}_0 = \E[F(x^{k}_0)] = \E[F(x^{k-1}_m)], \qquad R^k_m = \E[F(x^k_m)].
$$
Then Lemma \ref{lemma:Lyapunov-full-gradient} and Lemma \ref{lemma:gamma-full-gradient} immediately give 
	$$\sum_{k=1}^{K}\sum_{i=0}^{m-1}\E\left[\bigl\|\xiki\bigr\|^3+\frac{\rho/\gamma}{2B^{3/2}}\bigl\|\xki-\txkm\bigr\|^3\right]
    ~\leq~ \frac{F(\tx^0)-F^\star}{\gamma}.$$
Using similar arguments for the output option~1 and~2 in the proof of
Theorem~\ref{theorem:cvg-finite},
the first bound~\eqref{eqn:fix-B-first-bound} follows.

From~\eqref{eqn:fix-B-first-bound}, by Jensen's inequality, one simply gets
	$$
    \E\left[\bigl\|\xistar\bigr\|^2\right]\leq\left(\frac{F(\tx^0)-F^\star}{\gamma Km}\right)^{2/3},\qquad 
    \E\left[\bigl\|\xistar\bigr\|\right]\leq\left(\frac{F(\tx^0)-F^\star}{\gamma Km}\right)^{1/3},$$
and
\begin{eqnarray*}
\E\left[\bigl\|\xstar-\tx^{k^*-1}\bigr\|^2\right]  
&\leq& \biggl(\frac{2B^{3/2}\bigl(F(\tx^0)-F^\star\bigr)}{Km\rho}\biggr)^{2/3},\\
\E\left[\bigl\|\xstar-\tx^{k^*-1}\bigr\|\right]  
&\leq& \biggl(\frac{2B^{3/2}\bigl(F(\tx^0)-F^\star\bigr)}{Km\rho}\biggr)^{1/3}.
\end{eqnarray*}
Then following the proof of Lemma~\ref{lemma:relation-finite} in
Appendix~\ref{appendix:relation-finite}, 
more specifically~\eqref{eqn:grad-bound-xi-H}, we have
\beaa
	\E\left[\bigl\|\nabla F(\xkiestar)\bigr\|\right] 
    & \leq & \E\left[ \left(\rho+\frac{\sigma}{2}\right)\bigl\|\xistar\bigr\|^2 + \frac{1}{2\rho}\bigl\|\thh_{i^*}^{k^*}-\nabla^2 F(x^{k^*}_{t^*})\bigr\|_F^2\right] \nonumber\\
	& \leq & \E\left[\sqrt{2}\left(\rho+\frac{\sigma}{2}\right)\bigl\|\xistar\bigr\|^2 + \frac{\rho}{2B}\bigl\|x_{t^*}^{k^*} - \tx^{k^*-1}\bigr\|^2\right] \nonumber\\
	& \leq & \cO\bigl((Km)^{-2/3}\bigr).
	\eeaa 
Similarly, using~\eqref{eqn:grad-bound-xi-H},
one can bound the minimum eigenvalue of the Hessian as
	\beaa
	\E\left[\lambda_{\min}\bigl(\nabla^2 F(\xkiestar)\bigr)\right] 
    & \geq &  -\E\left[\left(\rho+\frac{\sigma}{2}\right)\left\|\xiki\right\| -\bigl\|\nabla^2 F(\xki)-\thki\bigr\|_F\right] \\
    & \geq & -\E\left[\left(\rho+\frac{\sigma}{2}\right)\bigl\|\xistar\bigr\| - \frac{\rho}{B^{1/2}}\bigl\|\xkiestar - \tx^{k^*-1}\bigr\|\right] \\
	&\geq& \cO\bigl((Km)^{-1/3}\bigr).
	\eeaa
Finally, by setting $B=8N^{2/3}$, $m=(1/3)N^{1/3}$ and $K=\epsilon^{-3/2}/m$, the total number of Hessian samples is
    \[
        mKB + KN = \mathcal{O}\bigl(N^{2/3}\epsilon^{-3/2}\bigr).
    \]
This concludes the proof.
\end{proof}

\subsection{The case of using subsampled gradient}
In this subsection, we analyze a non-adaptive SVRC scheme proposed 
in \cite{SVRC-Zhou}, which is shown in Algorithm~\ref{algo:SVRC-1st-gradient}. Similar to Algorithm~\ref{algo:SVRC-full-gradient}, one need not knwon the best value of $\gamma$ given in Lemma~\ref{lemma:gamma-no-full-gradient}, any choice of $\gamma = \Theta(\rho)$ will be acceptable.
In this scheme, the approximate gradients and Hessians are constructed as follows:
\begin{align}
\tgki &= \frac{1}{|\cski|}\sum_{j\in\cski}\left(\nabla f_j(\xki)-\nabla f_j(\txkm)+ \tg\right)+ \frac{1}{|\cski|}\sum_{j\in\cski} \left(\thk-\nabla^2f_j(\txkm)\right)\bigl(\xki-\txkm\bigr),
\label{eqn:svr-g-corrected} \\
\thki &= \frac{1}{|\cbki|}\sum_{i\in\cbki}\left(\nabla^2 f_i(\xki)-\nabla^2 f_i(\txkm)\right) + \thk. 
\label{eqn:svr-H-repeat} 
\end{align}
where  $\tg = \nabla F(\txkm)$ and $\thk = \nabla^2 F(\txkm)$.
Compared with~\eqref{eqn:svr-g} and~\eqref{eqn:svr-H},
the construction of $\thki$ is the same but $\tgki$ includes an additional second-order correction term.

\begin{algorithm2e}[t]
\linespread{1.2}\selectfont
	%\caption{Stochastic Variance Reduced Cubic Regularization Method (No Full Gradient)}
	\caption{Non-adaptive SVRC method using subsampled gradients and Hessians.}
	\label{algo:SVRC-1st-gradient}%\LinesNumberedHidden
	{\it Input:} An initial point $x^0$,  $\sigma>0$, sample sizes $S$ and $B$, and inner loop length $m$. An estimate of the Lipschitz constant $\rho$ and a parameter $\gamma = \Theta(\rho)$.   \\
	\For{k = 1,......,K }{
		Compute $\thk = \nabla^2f(\txkm).$\\
		Assign $x^k_0=\txkm$.\\
		\For{$t = 0,...,m-1$}{
            Randomly sample index sets $\cski$ and $\cbki$ with 
            $|\cski|=S$ and $|\cbki|=B$. \\
            Construct $\tgki$ according to~\eqref{eqn:svr-g-corrected} and
            $\thki$ according to \eqref{eqn:svr-H-repeat}. \\
        Solve $\xiki = \arg\min_{\xi} \left\{\xi^T\tgki + \frac{1}{2}\xi^T\thki\xi + \frac{\sigma}{6}\|\xi\|^3\right\}. $}		
		$\xkie = \xki + \xiki.$
	}
\emph{Output:} $x^{k^*}_{t^*+1}$ with \\
$\quad$ 
	Option 1: 
    $k^*,t^*=\displaystyle\argmin_{1\leq k\leq T,~0\leq t\leq m-1}\textstyle\left\{\bigl\|\xiki\bigr\|^3+\left(\frac{\rho/\gamma}{2B^{3/2}} +\frac{\rho/\gamma}{3\sqrt{2}S^{3/4}} \right)\bigl\|\xki-\txkm\bigr\|^3\right\}$. \\
$\quad$ 
	%Option 2: Randomly choose $k^*,t^*$ and output $x^{k^*}_{t^*+1}$.
	Option 2: 
    $k^*$ and $t^*$ are chosen randomly.
\end{algorithm2e} 

The following lemma is proved in Appendix~\ref{appendix:variance-1st-gd}.
\begin{lemma}
	\label{lemma:variance-1st-gd} 
	Let $\tgki$ be constructed by \eqref{eqn:svr-g-corrected} and $|\cski|=S$,
    then the following inequalities hold:
\begin{eqnarray}
    \mathbb{E}\left[\tgki|\xki\right]  &=& \nabla F(\xki),\label{lm:expect-g}\\
    \mathbb{E}\left[\bigl\|\tgki-\nabla F(\xki)\bigr\|_F^2 \Big| \xki\right] &\leq& \frac{\rho^2}{4S}\bigl\|\xki-\txkm\bigr\|^4.
        \label{lm:var-g}
\end{eqnarray}
\end{lemma}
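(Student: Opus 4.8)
The plan is to prove the unbiasedness~\eqref{lm:expect-g} and the variance bound~\eqref{lm:var-g} in turn, the latter resting on the observation that the second-order correction term in~\eqref{eqn:svr-g-corrected} is designed precisely to cancel the deterministic part of each summand, leaving only a per-sample Taylor remainder that is \emph{quadratic} in $\|\xki-\txkm\|$. Throughout I write $\Delta=\xki-\txkm$ and condition on $\xki$. For~\eqref{lm:expect-g} I would take the conditional expectation over a single uniformly sampled index $j$: the first group of terms averages to $\nabla F(\xki)-\nabla F(\txkm)+\tg=\nabla F(\xki)$ since $\tg=\nabla F(\txkm)$, while the correction term averages to $\bigl(\thk-\nabla^2 F(\txkm)\bigr)\Delta=0$ since $\thk=\nabla^2 F(\txkm)$. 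As the $S$ samples are i.i.d., averaging preserves this and gives~\eqref{lm:expect-g}.

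For~\eqref{lm:var-g}, the central step is to rewrite the single-index summand of~\eqref{eqn:svr-g-corrected} as $u_j=v_j+c$, where
\[
v_j:=\nabla f_j(\xki)-\nabla f_j(\txkm)-\nabla^2 f_j(\txkm)\Delta,
\qquad
c:=\nabla F(\txkm)+\nabla^2 F(\txkm)\Delta ,
\]
so that $v_j$ is a per-sample second-order Taylor remainder and $c$ is deterministic given $\xki$. Since $\mathbb{E}[u_j\,|\,\xki]=\nabla F(\xki)$ from the first part, the centered summand satisfies $u_j-\nabla F(\xki)=v_j-\mathbb{E}[v_j\,|\,\xki]$; the deterministic piece $c$ drops out entirely. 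Using independence of the $S$ samples and then discarding the mean,
\[
\mathbb{E}\bigl[\|\tgki-\nabla F(\xki)\|^2\,\big|\,\xki\bigr]
=\frac{1}{S}\,\mathbb{E}\bigl[\|v_1-\mathbb{E}[v_1\,|\,\xki]\|^2\,\big|\,\xki\bigr]
\leq \frac{1}{S}\,\mathbb{E}\bigl[\|v_1\|^2\,\big|\,\xki\bigr].
\]

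It then remains to bound $\|v_j\|$ deterministically. Writing $\nabla f_j(\xki)-\nabla f_j(\txkm)=\int_0^1\nabla^2 f_j(\txkm+s\Delta)\,\Delta\,ds$ and subtracting $\nabla^2 f_j(\txkm)\Delta$ gives $v_j=\int_0^1\bigl[\nabla^2 f_j(\txkm+s\Delta)-\nabla^2 f_j(\txkm)\bigr]\Delta\,ds$, so that $\|M\Delta\|\leq\|M\|_F\|\Delta\|$ together with the Hessian-Lipschitz assumption~\eqref{assumption: Hessian-Lip} yields $\|v_j\|\leq\int_0^1\rho s\|\Delta\|^2\,ds=\tfrac{\rho}{2}\|\Delta\|^2$, hence $\|v_j\|^2\leq\tfrac{\rho^2}{4}\|\Delta\|^4$. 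Substituting into the previous display gives~\eqref{lm:var-g}. The main (and essentially only) obstacle is spotting this cancellation: once one recognizes that the correction term cuts the variance from the $O(\|\Delta\|^2)$ of Lemma~\ref{lemma:var-with-Hess-g} down to the square of a Taylor remainder, everything else is routine calculus.
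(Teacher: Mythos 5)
Your proof is correct and follows essentially the same route as the paper's: both center the per-sample second-order Taylor remainder $z_j=\nabla f_j(\xki)-\nabla f_j(\txkm)-\nabla^2 f_j(\txkm)(\xki-\txkm)$, reduce the variance of the average to $\frac{1}{S}\E[\|z_1\|^2\,|\,\xki]$, and bound $\|z_1\|\leq\frac{\rho}{2}\|\xki-\txkm\|^2$ via Hessian Lipschitz continuity. The only difference is cosmetic: you spell out the integral-form Taylor remainder bound that the paper invokes in one line.
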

By the above lemma and a discussion similar to that 
of~\eqref{lm:descent-finite-2} and~\eqref{descent:full-gradient}, we have 
\bea
\E\left[F(\xkie)\big|\xki\right] 
& \leq & F(\xki)  -\left(\frac{\sigma}{4} - \frac{5\rho}{6}\right)\E\left[\bigl\|\xiki\bigr\|^3\Big|\xki\right] + \frac{1}{6\rho^2}\E\left[\bigl\|\nabla^2 F(\xki)-\thki\bigr\|_F^3\Big|\xki\right] \nonumber\\
&& + \frac{2}{3\rho^{1/2}}\E\left[\bigl\|\nabla F(\xki) - \tgki\bigr\|^{3/2}\Big|\xki\right]
\label{lm:descent-finite-3} \\
& \leq &  F(\xki)  -\left(\frac{\sigma}{4} - \frac{5\rho}{6}\right)\E\left[\bigl\|\xiki\bigr\|^3\Big|\xki\right] + \biggl(\frac{5\rho}{2B^{3/2}} + \frac{\rho}{3\sqrt{2}S^{3/4}}\biggr)\bigl\|\xki-\txkm\bigr\|^3,\nonumber
\eea
where the last inequality is due to Lemmas~\ref{lemma:var-with-Hess-g} 
and~\ref{lemma:variance-1st-gd} and Jensen's inequality. 
In the rest of the analysis, we use the Lyapunov function $R_t^k$ defined 
in~\eqref{defn:Lyapunov-finite} but with a new set of parameters  
by setting $c_m=0$ and
\bea
\label{defn:ci-new}
 c_{t} = c_{t+1}\left(1+2\theta_1^{-3} + \theta_2^{-6}\right)+\frac{3\rho}{B^{3/2}} +\frac{\sqrt{2}\rho}{3S^{3/4}}, \qquad t = 0,....,m-1.
\eea
Again the constants $\theta_1$ and $\theta_2$ will be determined later.
We present the following results without proof due to the similarity to 
their counterparts in previous subsection. 
\begin{lemma}
	\label{lemma:Lyapunov-no-full-gradient}
	Suppose the sequence $\{\xki\}_{t = 1,...,m}^{k = 1,...,K}$ is generated by Algorithm \ref{algo:SVRC-1st-gradient}, then we have
	\beaa
	\left(\frac{\sigma}{4} - \frac{5\rho}{6} - c_{t+1}\bigl(1 + \theta_1^6 + 2\theta_2^3\bigr)\right)\E\bigl[\|\xiki\|^3\bigr] + \left(\frac{\rho}{2B^{3/2}} +\frac{\rho}{3\sqrt{2}S^{3/4}} \right)\E\bigl[\|\xki-\txkm\|^3\bigr] 
    &\leq & R^k_t-R^k_{t+1}.
	\eeaa
    Define $\gamma = \displaystyle\min_{1\leq t\leq m}\textstyle\left\{\frac{\sigma}{4} - \frac{5\rho}{6} - c_{t+1}\bigl(1 + \theta_1^6 + 2\theta_2^3\bigr)\right\}$. Then for $k = 1,...,K$, 
	\bea
	\sum_{t=0}^{m-1}\E\left[\bigl\|\xiki\bigr\|^3+\left(\frac{\rho/\gamma}{2B^{3/2}} +\frac{\rho/\gamma}{3\sqrt{2}S^{3/4}} \right)\bigl\|\xki-\txkm\bigr\|^3\right] ~\leq~ \frac{R_0^k-R_m^k}{\gamma}.
	\eea
\end{lemma}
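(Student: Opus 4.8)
The plan is to follow the proof of Lemma~\ref{lemma:Lyapunov-full-gradient} essentially line by line, substituting the full-gradient descent estimate~\eqref{descent:full-gradient} with its subsampled-gradient analogue~\eqref{lm:descent-finite-3}. The latter already folds in both variance contributions: the Hessian term $\frac{5\rho}{2B^{3/2}}\|\xki-\txkm\|^3$ coming from Lemma~\ref{lemma:var-with-Hess-g}, and the corrected-gradient term $\frac{\rho}{3\sqrt{2}S^{3/4}}\|\xki-\txkm\|^3$ coming from Lemma~\ref{lemma:variance-1st-gd}. Note that the descent coefficient in front of $\E[\|\xiki\|^3\,|\,\xki]$ is now $\frac{\sigma}{4}-\frac{5\rho}{6}$ instead of $\frac{\sigma}{4}-\frac{\rho}{2}$, because the Young's inequality splitting of the gradient-error term in~\eqref{lm:descent-finite-3} is carried out with constant $\rho$ (the corrected-gradient variance of Lemma~\ref{lemma:variance-1st-gd} scales like $\rho^2\|\xki-\txkm\|^4/S$ by~\eqref{lm:var-g}), which contributes an additional $\frac{\rho}{3}\|\xiki\|^3$ on top of the Hessian split.

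Next I would reproduce the recursion~\eqref{Lyapunov:recursive}: applying Lemma~\ref{lemma:fact-recursive} with $a=\|\xki-\txkm\|$ and $b=\|\xkie-\xki\|=\|\xiki\|$ and taking expectations gives
\[
\E\bigl[\|\xkie-\txkm\|^3\bigr] \leq \bigl(1+2\theta_1^{-3}+\theta_2^{-6}\bigr)\E\bigl[\|\xki-\txkm\|^3\bigr] + \bigl(1+\theta_1^6+2\theta_2^3\bigr)\E\bigl[\|\xiki\|^3\bigr].
\]
Multiplying this by $c_{t+1}$ and adding the result to~\eqref{lm:descent-finite-3} yields an upper bound on $\E[F(\xkie)+c_{t+1}\|\xkie-\txkm\|^3]=R^k_{t+1}$ consisting of $\E[F(\xki)]$, a negative multiple $-\bigl(\frac{\sigma}{4}-\frac{5\rho}{6}-c_{t+1}(1+\theta_1^6+2\theta_2^3)\bigr)\E[\|\xiki\|^3]$, and the coefficient $c_{t+1}(1+2\theta_1^{-3}+\theta_2^{-6})+\frac{5\rho}{2B^{3/2}}+\frac{\rho}{3\sqrt{2}S^{3/4}}$ in front of $\E[\|\xki-\txkm\|^3]$.

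The one place requiring care is matching constants through the revised recursion~\eqref{defn:ci-new}, which is chosen precisely so that
\[
c_{t+1}\bigl(1+2\theta_1^{-3}+\theta_2^{-6}\bigr)+\frac{5\rho}{2B^{3/2}}+\frac{\rho}{3\sqrt{2}S^{3/4}} = c_t - \frac{\rho}{2B^{3/2}} - \frac{\rho}{3\sqrt{2}S^{3/4}},
\]
using $\frac{3\rho}{B^{3/2}}-\frac{5\rho}{2B^{3/2}}=\frac{\rho}{2B^{3/2}}$ for the Hessian part and the identity $\frac{\sqrt{2}}{3}=\frac{2}{3\sqrt{2}}$, so that the $\frac{\sqrt{2}\rho}{3S^{3/4}}$ term in~\eqref{defn:ci-new} exceeds the descent's $\frac{\rho}{3\sqrt{2}S^{3/4}}$ by exactly $\frac{\rho}{3\sqrt{2}S^{3/4}}$. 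Substituting this identity collapses the $\E[\|\xki-\txkm\|^3]$ contribution into $c_t\E[\|\xki-\txkm\|^3]$ minus the residual, and then moving $R^k_{t+1}$ to the right-hand side reproduces exactly the first displayed inequality of the lemma.

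Finally, for the summed statement I would lower-bound the coefficient of $\E[\|\xiki\|^3]$ uniformly by $\gamma=\min_{1\leq t\leq m}\{\frac{\sigma}{4}-\frac{5\rho}{6}-c_{t+1}(1+\theta_1^6+2\theta_2^3)\}$, divide the per-step inequality by $\gamma$, and sum over $t=0,\ldots,m-1$; the telescoping right-hand side collapses to $(R^k_0-R^k_m)/\gamma$. I anticipate no substantive obstacle, since the argument is structurally identical to the full-gradient case; the only real task is the constant accounting above, in particular confirming that the slightly larger gradient coefficient $\frac{\sqrt{2}\rho}{3S^{3/4}}$ in~\eqref{defn:ci-new} is exactly what leaves behind the residual $\frac{\rho}{3\sqrt{2}S^{3/4}}$ appearing both in the lemma and in output Option~1 of Algorithm~\ref{algo:SVRC-1st-gradient}.
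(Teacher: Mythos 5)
Your proposal is correct and follows exactly the route the paper intends: the paper omits this proof precisely because it mirrors Lemma~\ref{lemma:Lyapunov-full-gradient}, substituting the descent estimate~\eqref{lm:descent-finite-3} and the modified recursion~\eqref{defn:ci-new}, which is what you do. Your constant accounting checks out, in particular the identities $\frac{3\rho}{B^{3/2}}-\frac{5\rho}{2B^{3/2}}=\frac{\rho}{2B^{3/2}}$ and $\frac{\sqrt{2}\rho}{3S^{3/4}}-\frac{\rho}{3\sqrt{2}S^{3/4}}=\frac{\rho}{3\sqrt{2}S^{3/4}}$, as well as the coefficient $\frac{\sigma}{4}-\frac{5\rho}{6}$ arising from the Young split of the gradient-error term with constant $\rho$.
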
 
\begin{lemma}
	\label{lemma:gamma-no-full-gradient}
If we set $\sigma \geq 4\rho$, $m = (1/3)N^{1/5}$, $B = \alpha N^{2/5}$, $S = \alpha^2N^{4/5}$, $\theta_1 = N^{1/15}$ and $\theta_2 = N^{1/30}$, then we have $\rho>0$ as long as $\alpha\geq12$ and
	$$
    \gamma ~\geq~ \frac{\rho}{6}\left(1-4\bigl(3+\sqrt{2}/3\bigr)e\alpha^{-3/2}\right) = \cO(\rho).
    $$
\end{lemma}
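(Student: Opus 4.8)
The plan is to follow the template of the proof of Lemma~\ref{lemma:gamma-full-gradient}, since the only structural changes are the extra gradient-variance term $\tfrac{\sqrt{2}\rho}{3S^{3/4}}$ in the recursion~\eqref{defn:ci-new}, the shifted offset $\tfrac{5\rho}{6}$ inherited from Lemma~\ref{lemma:Lyapunov-no-full-gradient}, and the stronger lower bound $\sigma\ge 4\rho$. First I would substitute the prescribed values $\theta_1=N^{1/15}$, $\theta_2=N^{1/30}$, $B=\alpha N^{2/5}$, $S=\alpha^2 N^{4/5}$ into~\eqref{defn:ci-new}. A direct computation gives $2\theta_1^{-3}+\theta_2^{-6}=3N^{-1/5}$ and $\tfrac{3\rho}{B^{3/2}}+\tfrac{\sqrt 2\rho}{3S^{3/4}}=\bigl(3+\tfrac{\sqrt2}{3}\bigr)\rho\alpha^{-3/2}N^{-3/5}$, so the recursion collapses to the affine form $c_t=(1+3N^{-1/5})c_{t+1}+b$ with $b=(3+\tfrac{\sqrt2}{3})\rho\alpha^{-3/2}N^{-3/5}$. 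The key point is that $3N^{-1/5}=1/m$ under the choice $m=\tfrac13 N^{1/5}$, exactly as in the full-gradient case.

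The main computational step is to solve this affine recursion in closed form. I would locate its fixed point $c^\star=b/(3N^{-1/5})=\tfrac13(3+\tfrac{\sqrt2}{3})\rho\alpha^{-3/2}N^{-2/5}$, add $c^\star$ to both sides so that the recursion becomes the purely geometric one $c_t+c^\star=(1+1/m)(c_{t+1}+c^\star)$, and iterate downward from the terminal value $c_m=0$ to obtain $c_0+c^\star=(1+1/m)^m c^\star\le e\,c^\star$. Hence $c_0\le e\,c^\star$, and since the $c_t$ are decreasing in $t$ (with $c_0$ the maximum) this simultaneously bounds every $c_{i+1}$ appearing in the definition of $\gamma$.

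Finally I would insert this bound into $\gamma=\min_i\{\tfrac{\sigma}{4}-\tfrac{5\rho}{6}-c_{i+1}(1+\theta_1^6+2\theta_2^3)\}$, replacing each $c_{i+1}$ by $c_0$. With $\theta_1^6+2\theta_2^3=N^{2/5}+2N^{1/10}$ one has $(1+N^{2/5}+2N^{1/10})N^{-2/5}\to 1$, so for $N$ large enough this factor is at most $2$; multiplying by $c_0\le e\,c^\star\propto N^{-2/5}$ therefore cancels the $N$-dependence and leaves $(1+\theta_1^6+2\theta_2^3)c_0\le \tfrac{2e}{3}(3+\tfrac{\sqrt2}{3})\rho\alpha^{-3/2}$. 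Combined with $\tfrac{\sigma}{4}-\tfrac{5\rho}{6}\ge\tfrac{\rho}{6}$ (from $\sigma\ge4\rho$), this gives exactly $\gamma\ge\tfrac{\rho}{6}\bigl(1-4(3+\tfrac{\sqrt2}{3})e\alpha^{-3/2}\bigr)$, and positivity reduces to the requirement $\alpha^{3/2}>4(3+\tfrac{\sqrt2}{3})e\approx 37.7$, which holds once $\alpha\ge12$. I do not expect a genuine obstacle, as every step mirrors Lemma~\ref{lemma:gamma-full-gradient}; the only thing to watch is carrying the two separate variance terms correctly, so that their combined constant $3+\tfrac{\sqrt2}{3}$ propagates faithfully through the geometric-sum evaluation and the $N$-power cancellation.
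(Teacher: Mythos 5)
Your proposal is correct and is precisely the adaptation of the proof of Lemma~\ref{lemma:gamma-full-gradient} that the paper intends (the paper omits the proof of Lemma~\ref{lemma:gamma-no-full-gradient}, citing similarity): the same affine-recursion trick applied to~\eqref{defn:ci-new}, the same geometric bound $c_0\le e\,c^\star$ via $3N^{-1/5}=1/m$, and the same final substitution, with the combined constant $3+\sqrt{2}/3$ carried through correctly and the positivity check $12^{3/2}\approx 41.6>4(3+\sqrt{2}/3)e\approx 37.7$ verified. The only caveat is that your step $(1+N^{2/5}+2N^{1/10})N^{-2/5}\le 2$ requires $N$ to be modestly large, but the paper's own proof of Lemma~\ref{lemma:gamma-full-gradient} relies on the identical implicit assumption, so this is not a deviation.
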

\begin{theorem}
	\label{theorem:cvg-finite-no-full-gradient}
	For Algorithm \ref{algo:SVRC-1st-gradient}, set 
$$
\sigma\geq 4\rho, \quad
m = (1/3)N^{1/5}, \quad B = 12N^{2/5}, \quad S = B^2.
$$ 
Let $k^*,t^*$ be chosen by either of the two output options in the algorithm, 
then
	$$\E\left[\bigl\|\xistar\bigr\|^3+\left(\frac{\rho/\gamma}{2B^{3/2}} + \frac{\rho/\gamma}{3\sqrt{2}S^{3/4}}\right)\bigl\|\xstar-\tx^{k^*-1}\bigr\|^3\right]  ~\leq~ \frac{F(\tx^0) - F^\star}{Km\gamma}.
    $$
Consequently, we have 
\begin{eqnarray*}
\E\left[\bigl\|\nabla F(\xkiestar)\bigr\|\right] 
&\leq& \cO\bigl((Km)^{-2/3}\bigr),\\
\E\left[\lambda_{\min}\bigl(\nabla^2 F(\xkiestar)\bigr)\right] 
&\geq& -\cO\bigl((Km)^{-1/3}\bigr),
\end{eqnarray*}
and the total Hessian and gradient sample complexity 
for finding a second-order $\epsilon$-solution is
$\cO\bigl(N^{4/5}\epsilon^{-3/2}\bigr)$. 
\end{theorem}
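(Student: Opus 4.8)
The plan is to mirror the proof of Theorem~\ref{theorem:cvg-finite-full-gradient} almost line for line, replacing the full-gradient descent estimate~\eqref{descent:full-gradient} by the subsampled-gradient version~\eqref{lm:descent-finite-3} and the Lyapunov recursion of Lemma~\ref{lemma:Lyapunov-full-gradient} by that of Lemma~\ref{lemma:Lyapunov-no-full-gradient}. First I would observe that $x_0^k=\txkm$ and $c_m=0$ give $R_0^k=\E[F(x_m^{k-1})]$ and $R_m^k=\E[F(x_m^k)]$, so summing the per-epoch descent of Lemma~\ref{lemma:Lyapunov-no-full-gradient} over $t=0,\dots,m-1$ and telescoping over $k=1,\dots,K$ collapses the Lyapunov terms into $F(\tx^0)-F^\star$. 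Invoking $\gamma=\cO(\rho)>0$ from Lemma~\ref{lemma:gamma-no-full-gradient} (valid since $\sigma\geq4\rho$, $m=(1/3)N^{1/5}$, $B=12N^{2/5}$, $S=B^2$, $\theta_1=N^{1/15}$, $\theta_2=N^{1/30}$ with $\alpha=12\geq 12$) yields
\[
\sum_{k=1}^{K}\sum_{t=0}^{m-1}\E\left[\bigl\|\xiki\bigr\|^3+\Bigl(\tfrac{\rho/\gamma}{2B^{3/2}}+\tfrac{\rho/\gamma}{3\sqrt{2}\,S^{3/4}}\Bigr)\bigl\|\xki-\txkm\bigr\|^3\right]~\leq~\frac{F(\tx^0)-F^\star}{\gamma}.
\]
The first displayed bound in the theorem then follows from the identical output-rule argument as in Theorem~\ref{theorem:cvg-finite}: for Option~1 via the concavity of $\min$ combined with Jensen's inequality, and for Option~2 via the uniform random choice of $(k^*,t^*)$.

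Next I would convert this into the first- and second-order guarantees by reusing the derivation behind~\eqref{eqn:grad-bound-xi-H} in the proof of Lemma~\ref{lemma:relation-finite}. The only genuinely new feature relative to the full-gradient case is that the gradient is inexact, so the optimality condition~\eqref{eqn:subopt} leaves a residual $\nabla F(\xki)-\tgki$; carrying it through together with the Lipschitz expansion of $\nabla F$ gives
\[
\bigl\|\nabla F(\xkie)\bigr\|~\leq~\Bigl(\rho+\tfrac{\sigma}{2}\Bigr)\bigl\|\xiki\bigr\|^2+\tfrac{1}{2\rho}\bigl\|\nabla^2 F(\xki)-\thki\bigr\|_F^2+\bigl\|\nabla F(\xki)-\tgki\bigr\|.
\]
Conditioning on $\xki$, I would bound the Hessian variance by Lemma~\ref{lemma:var-with-Hess-g}, producing a $\tfrac{\rho}{2B}\|\xki-\txkm\|^2$ term, and the gradient residual by Lemma~\ref{lemma:variance-1st-gd} together with Jensen's inequality, producing a $\tfrac{\rho}{2\sqrt{S}}\|\xki-\txkm\|^2$ term. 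The choice $S=B^2$ is precisely what forces $1/\sqrt{S}=1/B$, so these two error terms share the same order and can both be absorbed into the $\|\xki-\txkm\|^2$ contribution controlled by the first bound.

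To finish the rate, I would apply Jensen's inequality to the first bound to extract $\E[\|\xistar\|^2]\leq(\tfrac{F(\tx^0)-F^\star}{\gamma Km})^{2/3}$, $\E[\|\xistar\|]\leq(\cdots)^{1/3}$, and the analogous bounds on $\E[\|\xstar-\tx^{k^*-1}\|^2]$ and $\E[\|\xstar-\tx^{k^*-1}\|]$ (which pick up a factor $\Theta(B^{3/2}/\rho)$ from inverting the Lyapunov coefficient). Substituting these and using $\gamma=\cO(\rho)$ makes every term $\cO((Km)^{-2/3})$ for the gradient, and via the semidefinite optimality condition~\eqref{eqn:subopt-psd} together with the first-order Hessian variance bound, $\cO((Km)^{-1/3})$ for the minimum eigenvalue. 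For the sample counts, with $K=\epsilon^{-3/2}/m$, $m=\Theta(N^{1/5})$, $B=\Theta(N^{2/5})$ and $S=B^2=\Theta(N^{4/5})$, the Hessian total is $mKB+KN=\Theta(\epsilon^{-3/2}N^{2/5})+\Theta(\epsilon^{-3/2}N^{4/5})=\cO(N^{4/5}\epsilon^{-3/2})$, and the gradient total $mKS+KN$ is likewise $\cO(N^{4/5}\epsilon^{-3/2})$.

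The main obstacle I expect is not any single estimate but the bookkeeping needed to close the extra gradient-residual term: one must verify that $\|\nabla F(\xki)-\tgki\|$, which scales as $\|\xki-\txkm\|^2/\sqrt{S}$ rather than $\|\xki-\txkm\|/\sqrt{B}$, is genuinely controlled by the same $\|\xki-\txkm\|^3$ quantity appearing in the Lyapunov function, and that the balance $S=B^2$ keeps this residual from inflating the sample complexity past $N^{4/5}$. Everything else is a mechanical transcription of the full-gradient argument of Theorem~\ref{theorem:cvg-finite-full-gradient}.
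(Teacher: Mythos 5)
Your proposal is correct and is essentially the proof the paper intends: the paper omits the argument precisely because it is a line-for-line transcription of Theorem~\ref{theorem:cvg-finite-full-gradient}, with Lemmas~\ref{lemma:Lyapunov-no-full-gradient} and~\ref{lemma:gamma-no-full-gradient} replacing their full-gradient counterparts, the extra gradient-residual term $\|\nabla F(\xki)-\tgki\|$ handled via Lemma~\ref{lemma:variance-1st-gd} and Jensen, and the balance $S=B^2$ making that residual the same order as the Hessian error. Your accounting of the $\Theta(B^{3/2}/\rho)$ factor from the Lyapunov coefficient and of the sample totals $KN+KmB$ and $KN+KmS$ matches the intended argument exactly.
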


%The Hessian and gradient sample complexity obtained here has the same 
%order in terms of $N$ and $\epsilon$ as the result in~\cite{SVRC-Zhou}, 
%but with a much smaller constant.

\section{Discussion}

We considered the problem of minimizing the average of a large
number of smooth and possibly nonconvex functions, 
$F(x)=(1/N)\sum_{i=1}^N f_i(x)$,
using subsampled Newton method with cubic regularization. 
We presented an adaptive variance reduction method that requires
$\cO(N+N^{2/3}\epsilon^{-3/2})$ Hessian samples 
for finding an approximate solution satisfying $\|\nabla F(x)\|\leq\epsilon$ 
and $\nabla^2 F(x) \succeq -\sqrt{\epsilon} I$.
This result holds for both sampling with and without replacement. 
Our analysis do not rely on high probability bounds from matrix concentration 
inequalities, instead, we use bounds on 3rd and 4th moments
of the average of random matrices. 

We have focused on the Hessian sample complexity by assuming that the 
solution to the cubic regularization (CR) subproblem~\eqref{eqn:CR-sub}
is available at each iteration.
Nesterov and Polyak \cite{Cubic:Nesterov} showed that the CR subproblem
is equivalent to a convex one-dimensional optimization problem, 
but this approach requires eigenvalue decomposion of the approximate Hessian 
$H^k$, which can be very costly for large-dimensional problems. 
Several recent works propose to solve the CR subproblem using iterative 
algorithms such as gradient descent \cite{Cubic:Carmon,chi-jin} or 
Lanczos method \cite{Cubic:Cartis-1,SCR-1}, 
and approximate trust-region solver \cite{HazanKoren2016} can also be used.
However, the overall complexity of the combined methods may still be high.

An efficient approximate solver for the CR subproblem
has been developed in \cite{AgarwalAllenZhu2017},
which leads to a total computational complexity of 
$\cO(N\epsilon^{-3/2}+N^{3/4}\epsilon^{-7/4})$
for minimizing the finite average problem~\eqref{prob:main-finite}.
This complexity is measured in terms of the total number 
of Hessian-vector products, i.e., multiplications by the component 
Hessians $\nabla^2 f_i(x^k)$.
Similar results have also been obtained by \cite{ReddiZaheer2018aistat}.
For many mahcine learning problems, including generalized linear models
and training neural networks, such Hessian-vector products can be computed
in $\cO(d)$ time \cite{Pearlmutter1994,chi-jin}.
We notice that the CR subproblem solved in~\cite{AgarwalAllenZhu2017} 
includes all~$N$ component Hessians. Thus 
it is possible to further reduce the overall computational complexity
by combining the efficient CR solver developed in \cite{AgarwalAllenZhu2017}
and the lower Hessian sample complexity obtained in this paper.

\section*{Acknowledgments}
We thank Zeyuan Allen-Zhu for helpful discussions on the complexities of 
several first and second-order methods for nonconvex optimization 
and the approximate cubic regularization solver in \cite{AgarwalAllenZhu2017}.

\appendix
\section{Proof of Lemma \ref{lemma:var-with-pw4}}
\label{appendix:iid4thmoment}
\begin{proof}
    Let $\langle Z_1, Z_2\rangle=\mathrm{trace}(Z_1^T Z_2)$ be the inner product
    of two matrices. Then $\|Z_1\|_F^2 = \langle Z_1, Z_1\rangle$.
Using the assumption that $\mathbb{E}[Z_i]=0$ for all~$i$ and $Z_1,\ldots,Z_n$
are independent and identically distributed, we have
	\begin{equation}
	\label{lmpf:var-with-pw4-1}
	\mathbb{E}\left[\bigg\|\frac{1}{n}\sum_{i=1}^{n}Z_i\bigg\|_F^4\right]  
    = \frac{1}{n^4} \mathbb{E} \left[\biggl\langle \sum_{i=1}^n Z_i,\, \sum_{i=1}^n Z_i\biggr\rangle^2\right]
    = \frac{1}{n^4}(T_1+T_2+\cdots+T_7),
	\end{equation}
	where 
\begin{eqnarray*}
    T_1 &=& n\mathbb{E}\left[\|Z_1\|_F^4\right], \\
    T_2 &=& 4n(n-1)\mathbb{E}\left[\|Z_1\|_F^2\langle Z_1, Z_2\rangle\right] ~=~ 0, \\
    T_3 &=& 2n(n-1)\mathbb{E}\left[\langle Z_1, Z_2\rangle^2\right] ~\leq~ 2n(n-1)\mathbb{E}\left[\|Z_1\|_F^4\right], \\
    T_4 &=& n(n-1)\mathbb{E}\left[\|Z_1\|_F^2\|Z_2\|_F^2\right] ~\leq~ n(n-1)\mathbb{E}\left[\|Z_1\|_F^4\right], \\
    T_5 &=& 4n(n-1)(n-2)\mathbb{E}\left[\langle Z_1,Z_2\rangle\langle Z_2,Z_3\rangle\right] ~=~ 0, \\
    T_6 &=& 2n(n-1)(n-2) \mathbb{E}\left[\langle Z_1,Z_2\rangle\|Z_3\|_F^2\right]  ~=~ 0, \\
    T_7 &=& n(n-1)(n-2)(n-3)\mathbb{E}\left[\langle Z_1,Z_2\rangle\langle Z_3,Z_4\rangle\right] ~=~ 0. 
\end{eqnarray*}
In total, we have 
	$$\mathbb{E}\left[\bigg\|\frac{1}{n}\sum_{i=1}^{n}Z_i\bigg\|_F^4\right]  = \frac{3n^2-2n}{n^4}\mathbb{E}\left[\|Z_1\|_F^4\right] \leq \frac{3}{n^2}\mathbb{E}\left[\|Z_1\|_F^4\right].$$
which is the desired result.
\end{proof}

\section{ Proof of Lemma \ref{lemma:var-with-Hess-g}}
\label{appendix:Hess-grad-var}
\begin{proof}
First, we prove the bounds for the variance reduced Hessian estimate. 
It is straightforward to show that 
    $\mathbb{E}\left[\thki|\xki\right]  = \nabla^2 F(\xki)$.
    For the rest two inequalities, let us first define 
    $$Z_j = \nabla^2f_{j}(\xki) - \nabla^2f_{j}(\txkm) + \nabla^2 F(\txkm) - \nabla^2 F(\xki),$$ 
    where $j$ is a uniform sample from $\{1,...,N\}$.  
    Note that 
    $$\E\left[\nabla^2 f_{j}(\xki) - \nabla^2 f_{j}(\txkm) \,\Big|\, \xki\right] =  \nabla^2 F(\xki) - \nabla^2 F(\txkm).$$
Therefore, $\mathbb{E}\left[Z_j\big|\xki\right] = 0$.  
    For the ease of notation, define 
    $z_j = \nabla^2 f_{j}(\xki) - \nabla^2 f_{j}(\txkm)$ such that
$Z_j=z_j-\E\bigl[z_j|\xki\bigr]$. 
    According to the Lipschitz continuity conditions, 
    $\|z_j\|_F\leq\rho\|\xki-\txkm\|$. 
For the second moment, we have
    \begin{align*}
    \mathbb{E}\left[\|Z_j\|_F^2\Big|\xki\right] 
    & =  \E\left[\bigl\|z_j-\E[z_j|\xki]\bigr\|_F^2\Big|\xki\right] \\
    & = \E\left[\|z_j\|_F^2\big|\xki\right] - \bigl\|\E[z_j|\xki]\bigr\|_F^2 \\
    & \leq \E\left[\|z_j\|_F^2\big|\xki\right] \\ 
    & \leq  \rho^2\bigl\|\xki-\txkm\bigr\|^2.
    \end{align*}
Therefore,
    \[
    \mathbb{E}\left[\bigl\|\thki-\nabla^2 F(\xki)\bigr\|_F^2\big|\xki\right] 
    = \mathbb{E} \Biggl[\biggl\|\frac{1}{|\cbki|}\sum_{j\in\cbki}Z_j\biggr\|_F^2\bigg|\xki \Biggr] 
    =  \frac{1}{|\cbki|}\mathbb{E}\left[\|Z_1\|_F^2\big|\xki\right]  
    \leq  \frac{\rho^2}{|\cbki|} \bigl\|\xki-\txkm\bigr\|^2.
    \]
For the fourth moment, we have
    \begin{align}
    \mathbb{E}\left[\|Z_j\|_F^4\big|\xki\right] 
    &= \E\left[\bigl\|z_j-\E[z_j|\xki]\bigr\|_F^4\big|\xki\right] \\
    %&= \E\Bigl[\Bigl\langle z_j-\E[z_j|\xki],\,z_j-\E[z_j|\xki]\Bigr\rangle^2\Big|\xki\Bigr] \nonumber\\
    %&= \E\Bigl[\Bigl( \|z_j\|_F^2 -2\bigl\langle z_j,\,\E[z_j|\xki] \bigr\rangle + \bigl\|\E[z_j|\xki]\bigr\|_F^2\Bigr)^2 \Big|\xki\Bigr] \nonumber\\
   &= \E\left[\|z_j\|_F^4\big|\xki\right] 
     + 4\E\left[\bigl\langle z_j, \E\bigl[z_j|\xki\bigr]\bigr\rangle^2\big|\xki\right] 
     + 2\E\left[\|z_j\|_F^2\big|\xki\right]\bigl\|\E[z_j|\xki]\bigr\|_F^2\nonumber\\
	& \qquad -4\Bigl\langle \E\bigl[\|z_j\|_F^2 z_j|\xki\bigr],\, \E[z_j|\xki]\Bigr\rangle - 3\bigl\|\E[z_j|\xki]\bigr\|_F^4\nonumber\\
	&\leq 11\, \E\left[\|z_j\|_F^4\big|\xki\right]\nonumber\\
        &\leq 11  \rho^4\bigl\|\xki-\txkm\bigr\|^4, \nonumber
%\label{lm:var-with-Hess-g-1}
\end{align}
where we used 
$\langle z_j,\E[z_j|\xki]\rangle\leq\|z_j\|_F \bigl\|\E[z_j|\xki]\bigr\|_F$
and $\bigl\|\E[z_j|\xki]\bigr\|_F^2 \leq\E\bigl[\|z_j\|_F^2\big|\xki\bigr]$.
Then by Lemma~\ref{lemma:var-with-pw4}, we obtain
\[
\mathbb{E}\left[\bigl\|\thki-\nabla^2 F(\xki)\bigr\|_F^4\Big|\xki\right] 
 = \mathbb{E}\Biggl[\biggl\|\frac{1}{|\cbki|}\sum_{j\in\cbki}Z_j\biggr\|_F^4 \bigg|\xki\Biggr] 
\leq \frac{3}{|\cbki|^2}\mathbb{E}\left[\|Z_1\|_F^4\big|\xki\right]  
\leq  \frac{33\rho^4}{|\cbki|^2} \bigl\|\xki-\txkm\bigr\|^4.
\]
Similarly, one can get the variance bounds for the gradient estimate. 
This part of the proof is omitted for simplicity.
\end{proof}

\section{Proof of Corollary \ref{corollary:var-with-Hess-g-xi}}
\label{appendix:var-Hess-grad-xi}
\begin{proof}
By Lemma \ref{lemma:var-with-Hess-g} and the mini-batch size rule in 
Algorithm~\ref{algo:SVRC},
\[
\mathbb{E}\left[\bigl\|\thki-\nabla^2 F(\xki)\bigr\|_F^2\big|\xki\right] 
~\leq~ \frac{\rho^2}{|\cbki|} \bigl\|\xki-\txkm\bigr\|^2 
~\leq~ \rho^2\epsilon_H 
%= \rho^2\max\{\rho^{-4/3}\|\xikim\|^2,\epsilon^k_h\} 
~\leq~ \rho^2\left(\|\xikim\|^2 + \epsilon\right).
\]
Due to the concavity of the square root function $\sqrt{\cdot}$, 
we can apply Jensen's inequality to obtain
\[
\mathbb{E}\left[\bigl\|\thki-\nabla^2 F(\xki)\bigr\|_F\big|\xki\right] 
~\leq~ \sqrt{\mathbb{E}\left[\|\thki-\nabla^2 F(\xki)\|_F^2\big|\xki\right]}
~\leq~ \rho\sqrt{\epsilon_H} 
~\leq~ \rho\left(\|\xikim\| + \epsilon^{1/2}\right).
\]
Similarly, we have 
\[
\mathbb{E}\left[\bigl\|\thki-\nabla^2 F(\xki)\bigr\|_F^4\big|\xki\right] 
~\leq~ \frac{33\rho^4}{|\cbki|^2} \bigl\|\xki-\txkm\bigr\|^4 
~\leq~ 33\rho^4 \epsilon_H^2. 
%= \rho^2\max\{\rho^{-4/3}\|\xikim\|^2,\epsilon^k_h\} 
\]
Again, with the concavity of the function $(\cdot)^{3/4}$, applying Jensen's inequality yields
\begin{align*}
\mathbb{E}\left[\bigl\|\thki-\nabla^2 F(\xki)\bigr\|^3_F\big|\xki\right] 
&\leq \left(\mathbb{E}\left[\bigl\|\thki-\nabla^2 F(\xki)\bigr\|_F^4\big|\xki\right]\right)^{3/4} 
~\leq~ 33^{3/4}\rho^3\epsilon_H^{3/2} \\
&\leq~ 33^{3/4}\rho^3\left(\|\xikim\|^3+\epsilon^{3/2}\right).
\end{align*}

Following a similar line of arguments, one can get the bounds for the 
gradient variances. We omit the details for simplicity.
\end{proof}

\section{Proof of Lemma \ref{lemma:relation-finite}}
\label{appendix:relation-finite}
\begin{proof}
Using the triangule inequality, we have
\begin{eqnarray*}
\bigl\|\nabla F(\xkie)\bigr\|
&\leq& \bigl\| \nabla F(\xkie) - \nabla F(\xki) -\nabla^2 F(\xki)\xiki \bigr\|
   +\bigl\|\nabla^2 F(\xki)\xiki - \thki\xiki\bigr\| \\
&& +\bigl\| \nabla F(\xki)-\tgki\bigr\| + \bigl\|\tgki + \thki\xiki\bigr\|.
\end{eqnarray*}
By the Lipschitz continuity of $\nabla^2 F$ and the fact $\xkie=\xki+\xiki$,
\[
\bigl\|\nabla F(\xkie) - \nabla F(\xki) - \nabla^2 F(\xki)\xiki\bigr\|
\leq \frac{\rho}{2}\bigl\|\xiki\bigr\|^2.
\]
In addition, the optimality condition~\eqref{eqn:subopt} implies
\[
\bigl\|\tgki + \thki\xiki\bigr\| = \frac{\sigma}{2}\bigl\|\xiki\bigr\|^2.
\]
Consequently, 
\begin{eqnarray}
\bigl\|\nabla F(\xkie)\bigr\| 
& \leq & \frac{\sigma+\rho}{2}\bigl\|\xiki\bigr\|^2 + \bigl\|\thki-\nabla^2 F(\xki)\bigr\|_F\bigl\|\xiki\bigr\| + \bigl\|\nabla F(\xki)-\tgki\bigr\|\nonumber\\
& \leq & \left(\rho+\frac{\sigma}{2}\right)\bigl\|\xiki\bigr\|^2 + \frac{1}{2\rho}\bigl\|\thki-\nabla^2 F(\xki)\bigr\|_F^2 + \bigl\|\nabla F(\xki)-\tgki\bigr\|.
\label{eqn:grad-bound-xi-H}
\end{eqnarray}
Taking expectation on both sides of the above inequality and applying 
Corollary \ref{corollary:var-with-Hess-g-xi} yield
\[
\E\left[\bigl\|\nabla F(\xkie)\bigr\|\right]
~\leq~ \left(\rho+\frac{\sigma}{2}\right)\E\left[\bigl\|\xiki\bigr\|^2\right] 
+ \left(\frac{\rho}{2}+L\right)\left(\E\left[\bigl\|\xikim\bigr\|^2\right] 
+\epsilon\right),
\]
which is the first desired result. 
	
For the secnod inequality, the optimality condition~\eqref{eqn:subopt-psd} 
implies	
$$\lambda_{\min}(\thki) \geq -\frac{\sigma}{2}\bigl\|\xiki\bigr\|.$$
Therefore, according to the Lipschitz continuity of $\nabla^2 F$,
\begin{eqnarray}
\lambda_{\min}\bigl(\nabla^2 F(\xkie)\bigr) 
& \geq & \lambda_{\min}\bigl(\nabla^2 F(\xki)\bigr) - \rho\bigl\|\xiki\bigr\| 
\nonumber \\
& \geq & \lambda_{\min}\bigl(\thki\bigr) - \rho\bigl\|\xiki\bigr\| -\bigl\|\nabla^2 F(\xki)-\thki\bigr\|_F \nonumber \\ 
& \geq & - \left(\rho+\frac{\sigma}{2}\right)\bigl\|\xiki\bigr\| - \bigl\|\nabla^2 F(\xki)-\thki\bigr\|_F .
\label{eqn:eigen-bound-xi-H}
\end{eqnarray}
Taking expectation on both sides and applying the Corollary \ref{corollary:var-with-Hess-g-xi} results in 
\begin{eqnarray*}
 \mathbb{E}\left[\lambda_{\min}\bigl(\nabla^2 F(\xkie)\bigr)\right]  
   & \geq & - \left(\rho+\frac{\sigma}{2}\right)\mathbb{E}\left[\bigl\|\xiki\bigr\|\right] -\mathbb{E}\left[\bigl\|\nabla^2 F(\xki)-\thki\bigr\|_F\right] \\
   & \geq & -\left(\rho+\frac{\sigma}{2}\right)\mathbb{E}\left[\bigl\|\xiki\bigr\|\right] -\rho\left(\mathbb{E}\left[\|\xikim\|\right]+ \epsilon^{1/2}\right),
\end{eqnarray*}
which is the second desired inequality.
\end{proof}

\section{Proof of Lemma \ref{lemma:var-without}}
\label{appendix:var-without}
\begin{proof}
Equation \eqref{lm:var-without-2} is a standard result for variance analysis of sampling without replacement scheme. Hence we omit the proof of this equation. 
To prove \eqref{lm:var-without-4}, we start with the expansions 
in~\eqref{lmpf:var-with-pw4-1} and calculate the terms $T_1,...,T_7$ 
for sampling without replacement.
First, the following three terms do not change:
\beaa
T_1 & = & n\mathbb{E}\left[\|Z_1\|_F^4\right], \\
T_3 & = & 2n(n-1)\mathbb{E}\left[\bigl(\langle Z_1,Z_2\rangle\bigr)^2\right],\\
T_4 & = & n(n-1)\mathbb{E}\left[\|Z_1\|_F^2\|Z_2\|_F^2\right]. 
\eeaa
For $T_2$, we have
\beaa
T_2 & = & 4n(n-1)\mathbb{E}\left[\|Z_1\|_F^2\langle Z_1,Z_2\rangle\right] \\
%& = & 4n(n-1)\mathbb{E}\Bigl[\E\bigl[\|Z_1\|_F^2\langle Z_1,Z_2\rangle\bigl]\big|Z_1\Bigr] \\
& = & 4n(n-1)\mathbb{E}\Bigl[\|Z_1\|_F^2\bigl\langle Z_1,\E\left[Z_2|Z_1\right]\bigr\rangle\Bigr] \\
& = & 4n(n-1)\sum_{i=1}^N\frac{1}{N}\|X_i\|_F^2\bigg\langle X_i,\sum_{j\neq i}\frac{1}{N-1}X_j\bigg\rangle \\
& = & \frac{4n(n-1)}{(N)(N-1)} \sum_{i=1}^N\|X_i\|_F^2 \bigg\langle X_i,\underbrace{\sum_{j=1}^NX_j}_{0} - X_i\bigg\rangle\\[-3ex]
& = & -\frac{4n(n-1)}{ N-1 } \sum_{i=1}\frac{1}{N}\|X_i\|_F^4\\
& = & -\frac{4n(n-1)}{ N-1 } \mathbb{E}\left[\|Z_1\|_F^4\right]. 
\eeaa
For $T_5$, we have
\beaa
T_5 & = & 4n(n-1)(n-2)\mathbb{E}\bigl[\langle Z_1,Z_2\rangle\cdot\langle Z_2,Z_3\rangle\bigr]\\
& = & 4n(n-1)(n-2)\mathbb{E}\Bigl[\bigl\langle \E\bigl[Z_1|Z_2\bigr],Z_2\bigr\rangle\cdot\bigl\langle Z_2,\E\bigl[Z_3|Z_2,Z_1\bigr]\bigr\rangle\Bigr]\\
& = & \frac{4n(n-1)(n-2)}{N(N-1)(N-2)}\sum_{i=1}^N\bigg\langle X_i,\sum_{j\neq i}X_j\biggl(\Bigl\langle X_i,\sum_{k\neq i,j }X_k\Bigr\rangle\biggr)\bigg\rangle \\ 
& = & \frac{4n(n-1)(n-2)}{N(N-1)(N-2)}\sum_{i=1}^N\bigg\langle X_i,\sum_{j\neq i}X_j\biggl(\Bigl\langle X_i,-X_i-X_j\Bigr\rangle\biggr)\bigg\rangle \\ 
& = & \frac{4n(n-1)(n-2)}{N(N-1)(N-2)}\sum_{i=1}^N\bigg\langle X_i,\sum_{j\neq i}X_j\Bigl(-\|X_i\|_F^2-\langle X_i,X_j\rangle\Bigr)\bigg\rangle\\
& = & -\frac{4n(n-1)(n-2)}{N-2}\sum_{i=1}^N\sum_{j\neq i}\frac{1}{N(N-1)}\left(\|X_i\|_F^2\langle X_i,X_j\rangle + \bigl(\langle X_i,X_j\rangle\bigr)^2\right)\\
& = & -\frac{4n(n-1)(n-2)}{N-2}\Bigl(\mathbb{E}\left[\|Z_1\|_F^2\langle Z_1,Z_2\rangle\right] + \mathbb{E}\left[\bigl(\langle Z_1,Z_2\rangle\bigr)^2\right]\Bigr)\\
&  = & 4n\frac{(n-1)(n-2)}{(N-1)(N-2)}\mathbb{E}\left[\|Z_1\|_F^4\right]
-\frac{4n(n-1)(n-2)}{N-2} \mathbb{E}\left[\bigl(\langle Z_1,Z_2\rangle\bigr)^2\right],
\eeaa
where in the last equality we used result for $T_2$.
In similar ways, one can find the expressions for $T_6$ and $T_7$:
\beaa
T_6 & = & 2n(n-1)(n-2)\mathbb{E}\left[\|Z_3\|_F^2\langle Z_1,Z_2\rangle\right]\\
& = & 2n\frac{(n-1)(n-2)}{(N-1)(N-2)}\mathbb{E}\left[\|Z_1\|_F^4\right]  - 2n(n-1)\frac{n-2}{N-2}\mathbb{E}\left[\|Z_1\|_F^2\|Z_2\|_F^2\right]
\eeaa
and
\beaa
T_7& = & 2n(n-1)(n-2)(n-3) \mathbb{E}\bigl[\langle Z_1,Z_2\rangle\cdot\langle Z_3,Z_4\rangle\bigr]\\
& = & -3n\frac{(n-1)(n-1)(n-3)}{(N-1)(N-2)(N-3)} \mathbb{E}\left[\|Z_1\|_F^4\right]
+2n(n-1)\frac{(n-2)(n-3)}{(N-2)(N-3)}\mathbb{E}\bigl[\bigl(\langle Z_1,Z_2\rangle\bigr)^2\bigr]\\
& & + n(n-1)\frac{(n-2)(n-3)}{(N-2)(N-3)}\mathbb{E}\bigl[\|Z_1\|_F^2\|Z_2\|_F^2\bigr].
\eeaa
Summing these terms up gives the desired equation \eqref{lm:var-without-4}. 
\end{proof}

\section{Proof of Lemma \ref{lemma:fact-recursive}}
\label{appendix:fact-recursive}
\begin{proof} 
We expand $(a+b)^3$ and then use Young's inequality,
\beaa
	(a+b)^3 & = & a^3 + b^3 + 3a^2b + 3b^2a \\
	&=& a^3 + b^3 + 3(a/\theta_1)^2(b\theta_1^2) + 3(a/\theta_2^2)(b\theta_2)^2 \\
	& \leq & a^3 + b^3 + 3 \left( \frac{(a^2/\theta_1^2)^{\ttwo}}{3/2} + \frac{(b\theta_1^2)^3}{3}\right) + 3 \left( \frac{(a/\theta_2^2)^{3}}{3} + \frac{(b^2\theta_2^2)^{\ttwo}}{3/2}\right)\\
	& = & (1+2\theta_1^{-3} + \theta_2^{-6}) a^3 + (1 + \theta_1^6 + 2\theta_2^3)b^3 .
\eeaa
This completes the proof.
\end{proof}

\section{Proof of Lemma \ref{lemma:variance-1st-gd}}
\label{appendix:variance-1st-gd}
\begin{proof}
Define 
$$
Z_j = \nabla f_{j}(\xki) - \nabla f_j(\txkm) - \nabla^2f_j(\txkm)(\xki-\txkm) - \left(\nabla F(\xki) - \nabla F(\txkm) - \nabla^2 F(\txkm)(\xki-\txkm)\right)
$$ 
and 
$$
z_j = \nabla f_{j}(\xki) - \nabla f_j(\txkm) - \nabla^2f_j(\txkm)(\xki-\txkm),
$$
so that we have $Z_j = z_j-\E[z_j|\xki]$.
According to~\eqref{eqn:svr-g-corrected}, we have
$\tgki -\nabla F(\xki)= \frac{1}{|\cski|}\sum_{j\in\cski}Z_j$.
Therefore, $\E[\tgki|\xki] = \nabla F(\xki)$ and
\beaa
\E\left[\bigl\|\tgki -\nabla F(\xki)\bigr\|^2\big|\xki\right] 
& = &  \frac{1}{|\cski|}\E\left[\|Z_1\|^2\big|\xki\right] \\
& = &  \frac{1}{|\cski|}\left(\E\left[\|z_1\|^2\big|\xki\right] - \bigl\|\E[z_1|\xki]\bigr\|^2\right) \\
& \leq & \frac{1}{|\cski|}\E\left[\|z_1\|^2|\xki\right]\\
& \leq & \frac{\rho^2}{4|\cski|}\bigl\|\xki-\txkm\bigr\|^4
\eeaa
where the last inequality is due to the Lipschitz continuity of $\nabla^2f$. 
\end{proof}

\bibliographystyle{abbrv}
\bibliography{SVRC}

\begin{thebibliography}{10}

\bibitem{AgarwalAllenZhu2017}
N.~Agarwal, Z.~Allen-Zhu, B.~Bullins, E.~Hazan, and T.~Ma.
\newblock Finding approximate local minima faster than gradient descent.
\newblock In {\em Proceedings of the 49th Annual ACM SIGACT Symposium on Theory
  of Computing (STOC)}, pages 1195--1199, 2017.
\newblock (arXiv:1611.01146).

\bibitem{AllenZhuHazan2016ICML}
Z.~Allen-Zhu and E.~Hazan.
\newblock Variance reduction for faster non-convex optimization.
\newblock In {\em Proceedings of the 33rd International Conference on
  International Conference on Machine Learning (ICML)}, pages 699--707, 2016.

\bibitem{Bianconcini2015}
T.~Bianconcini, G.~Liuzzi, B.~Morini, and M.~Sciandrone.
\newblock On the use of iterative methods in cubic regularization for
  unconstrained optimization.
\newblock {\em Computational Optimization and Applications}, 60(1):35--57,
  2015.

\bibitem{Cubic:Carmon}
Y.~Carmon and J.~C. Duchi.
\newblock Gradient descent efficiently finds the cubic-regularized non-convex
  {N}ewton step.
\newblock {\em arXiv preprint arXiv:1612.00547}, 2016.

\bibitem{Cubic:Cartis-1}
C.~Cartis, N.~I.~M. Gould, and P.~L. Toint.
\newblock Adaptive cubic regularisation methods for unconstrained optimization.
  {P}art {I}: motivation, convergence and numerical results.
\newblock {\em Mathematical Programming}, 127(2):245--295, 2011.

\bibitem{Cubic:Cartis-2}
C.~Cartis, N.~I.~M. Gould, and P.~L. Toint.
\newblock Adaptive cubic regularisation methods for unconstrained optimization.
  {P}art {II}: worst-case function-and derivative-evaluation complexity.
\newblock {\em Mathematical Programming}, 130(2):295--319, 2011.

\bibitem{SCR-2}
X.~Chen, B.~Jiang, T.~Lin, and S.~Zhang.
\newblock On adaptive cubic regularized {Newton's} methods for convex
  optimization via random sampling.
\newblock {\em arXiv preprint arXiv:1802.05426}, 2018.

\bibitem{ErdogduMontanari2015}
M.~A. Erdogdu and A.~Montanari.
\newblock Convergence rates of sub-sampled {Newton} methods.
\newblock In {\em Advances in Neural Information Processing Systems (NIPS) 28},
  pages 3052--3060, 2015.

\bibitem{HazanKoren2016}
E.~Hazan and T.~Koren.
\newblock A linear-time algorithm for trust region problems.
\newblock {\em Mathematical Programming}, 158(1):363--381, 2016.

\bibitem{JohnsonZhang13}
R.~Johnson and T.~Zhang.
\newblock Accelerating stochastic gradient descent using predictive variance
  reduction.
\newblock In {\em Advances in Neural Information Processing Systems (NIPS) 26},
  pages 315--323. 2013.

\bibitem{SCR-1}
J.~M. Kohler and A.~Lucchi.
\newblock Sub-sampled cubic regularization for non-convex optimization.
\newblock {\em arXiv preprint arXiv:1705.05933}, 2017.

\bibitem{Nesterov04book}
Y.~Nesterov.
\newblock {\em Introductory Lectures on Convex Optimization: A Basic Course}.
\newblock Kluwer, Boston, 2004.

\bibitem{Cubic:Nesterov}
Y.~Nesterov and B.~T. Polyak.
\newblock Cubic regularization of newton method and its global performance.
\newblock {\em Mathematical Programming}, 108(1):177--205, 2006.

\bibitem{Pearlmutter1994}
B.~A. Pearlmutter.
\newblock Fast exact multiplication by the {Hessian}.
\newblock {\em Neural Computation}, 6(1):147–160, 1994.

\bibitem{Reddi2016ICML}
S.~Reddi, A.~Hefny, S.~Sra, B.~P\'{o}cz\'{o}s, and A.~Smola.
\newblock Stochastic variance reduction for nonconvex optimization.
\newblock In {\em Proceedings of the 33rd International Conference on
  International Conference on Machine Learning (ICML)}, pages 314--323, 2016.

\bibitem{ReddiZaheer2018aistat}
S.~Reddi, M.~Zaheer, S.~Sra, B.~P\'{o}cz\'{o}s, F.~Bach, R.~Salakhutdinov, and
  A.~Smola.
\newblock A generic approach for escaping saddle points.
\newblock In {\em Proceedings of the Twenty-First International Conference on
  Artificial Intelligence and Statistics}, pages 1233--1242, 2018.

\bibitem{RoostaMahoney2016}
F.~Roosta-Khorasani and M.~W. Mahoney.
\newblock Sub-sampled {Newton} methods {I}: Globally convergent algorithms.
\newblock {\em arXiv preprint arXiv:1601.04737}, 2016.

\bibitem{chi-jin}
N.~Tripuraneni, M.~Stern, C.~Jin, J.~Regier, and M.~I. Jordan.
\newblock Stochastic cubic regularization for fast nonconvex optimization.
\newblock {\em arXiv preprint arXiv:1711.02838}, 2017.

\bibitem{Tropp2015}
J.~A. Tropp.
\newblock An introduction to matrix concentration inequalities.
\newblock {\em Foundations and Trends in Machine Learning}, 8(1-2):1--230,
  2015.

\bibitem{SVRC-note}
Z.~Wang, Y.~Zhou, Y.~Liang, and G.~Lan.
\newblock A note on inexact condition for cubic regularized {Newton's} method.
\newblock {\em arXiv preprint arXiv:1808.07384}, 2018.

\bibitem{SVRC-Lan}
Z.~Wang, Y.~Zhou, Y.~Liang, and G.~Lan.
\newblock Sample complexity of stochastic variance-reduced cubic regularization
  for nonconvex optimization.
\newblock {\em arXiv preprint arXiv:1802.07372}, 2018.

\bibitem{XiaoZhang14ProxSVRG}
L.~Xiao and T.~Zhang.
\newblock A proximal stochastic gradient method with progressive variance
  reduction.
\newblock {\em SIAM Journal on Optimization}, 24(4):2057--2075, 2014.

\bibitem{XuMahoney2017InexactHessian}
P.~Xu, F.~Roosta-Khorasani, and M.~W. Mahoney.
\newblock Newton-type methods for non-convex optimization under inexact
  {Hessian} information.
\newblock arXiv:708.07164, 2018.

\bibitem{XuYangMahoney2016}
P.~Xu, J.~Yang, F.~Roosta-Khorasani, C.~R\'{e}, and M.~W. Mahoney.
\newblock Sub-sampled {Newton} methods with non-uniform sampling.
\newblock In {\em Advances in Neural Information Processing Systems 29}, pages
  3000--3008. 2016.

\bibitem{YeLuoZhang2017}
H.~Ye, L.~Luo, and Z.~Zhang.
\newblock Approximate {N}ewton methods and their local convergence.
\newblock In {\em Proceedings of the 34th International Conference on Machine
  Learning}, pages 3931--3939, 2017.

\bibitem{ZhangMahdaviJin13}
L.~Zhang, M.~Mahdavi, and R.~Jin.
\newblock Linear convergence with condition number independent access of full
  gradients.
\newblock In {\em Advances in Neural Information Processing Systems 26}, pages
  980--988. 2013.

\bibitem{SVRC-Zhou}
D.~Zhou, P.~Xu, and Q.~Gu.
\newblock Stochastic variance-reduced cubic regularized {N}ewton methods.
\newblock In {\em Proceedings of the 35th International Conference on Machine
  Learning}, pages 5990--5999, 2018.

\end{thebibliography}

\end{document}